\numberwithin{equation}{section}
\newtheorem{thm}{Theorem}[section]
    \theoremstyle{Definition}
    \newtheorem{defi}[thm]{Definition}
    \newtheorem{defis}[thm]{Definitions}
    \theoremstyle{Definition and Remark}
    \newtheorem{defi-rem}[thm]{Definition and Remark}
    \newtheorem{defi-rems}[thm]{Definition and Remarks}
    \newtheorem{defis-rems}[thm]{Definitions and Remarks}
    \newtheorem{defi-Nots}[thm]{Definition and Notations}
    \newtheorem{defi-Not}[thm]{Definition and Notation}
    \theoremstyle{Lemma}
    \newtheorem{lem}[thm]{Lemma}
    \newtheorem{rem}[thm]{Remark}
    \theoremstyle{Corollary}
    \newtheorem{cor}[thm]{Corollary}
    \newtheorem{prop}[thm]{Proposition}
    \newtheorem{prop-defi}[thm]{Proposition-Definition}
    \newtheorem{conv}[thm]{Convention}
    \newtheorem{consi}[thm]{Consideration}
\newtheorem{subdefi-rem}[subthm]{Definition and Remark}
\newtheorem{subdefi-rems}[subthm]{Definition and Remarks}
\newtheorem{subdefis-rems}[subthm]{Definitions and Remarks}
\newtheorem{subdefi-Nots}[subthm]{Definition and Notations}
\newtheorem{subdefi-Not}[subthm]{Definition and Notation}
\DeclareMathOperator{\Hom}{Hom} 
\DeclareMathOperator{\Ass}{Ass}
\DeclareMathOperator{\Ext}{Ext}
\DeclareMathOperator{\depth}{depth}
 \DeclareMathOperator{\Tot}{Tot}
\DeclareMathOperator{\grade}{grade}
\newcommand\numberthis{\addtocounter{equation}{1}\tag{\theequation}}
\newcommand{\fm}{\mathfrak{m}}
\newcommand{\fp}{\frak{p}}
\newcommand{\fq}{\frak{q}}
\newcommand{\fa}{\frak{a}}
\newcommand{\fb}{\frak{b}}
\newcommand{\fc}{\frak{c}}
 \newcommand{\Ht}{\text{H}}   
 \newcommand{\Kt}{\text{K}}    
 \newcommand{\as}{\text{ass}}  \newcommand{\dime}{{\rm dim}}
\newcommand{\map}{\mathfrak{p}} \newcommand{\mam}{\mathfrak{m}}
 \newcommand{\mn}{\mathbb{N}}
\newcommand{\Supp}{\text{Supp}}  
\newcommand{\pd}{\text{pd}} \newcommand{\id}{\text{id}}
\newcommand{\ext}{\text{Ext}}
\newcommand{\homm}{\text{Hom}}
 \newcommand{\im}{\text{im}}
 \newcommand{\tor}{\text{Tor}}
 \newcommand{\Frac}{\text{Frac}} 
\DeclareMathAlphabet{\mathcalligra}{T1}{calligra}{g}{f}
\newcommand{\llar}{-\kern-5pt-\kern-5pt\longrightarrow}
\def\restr{{\kern-1pt\restriction\kern-1pt}}
\begin{document}

	\title{Reductions towards a characteristic free proof of the  Canonical Element Theorem}
	\author[Ehsan Tavanfar]{Ehsan Tavanfar}
	\address{School of Mathematics, Institute for 	Research in Fundamental Sciences (IPM), P. O. Box: 	19395-5746, Tehran, Iran. }
	\email{tavanfar@ipm.ir and tavanfar@gmail.com}	
	\maketitle
	\markboth{\small \textsc{E. Tavanfar}}{\small  \textsc{{Reductions towards a characteristic free proof of the  Canonical Element Theorem}}}

\date{\today}
\makeatletter{\renewcommand*{\@makefnmark}{}
\footnotetext{(MSC2020)  13D22, 13A30, 13D45, 13H10.}\makeatother}
\makeatletter{\renewcommand*{\@makefnmark}{}
	\footnotetext{Keywords:  Almost complete intersection ring,  balanced big Cohen-Macaulay module, Canonical Element Theorem,   Direct Summand Theorem, extended Rees algebra, Improved New Intersection Theorem, Monomial Theorem,  quasi-Gorenstein ring, unique factorization domain.}\makeatother}
\makeatletter{\renewcommand*{\@makefnmark}{}
	\footnotetext{This research was supported by a grant from IPM.}\makeatother}

  \begin{abstract}

We reduce    Hochster's Canonical Element Conjecture  (theorem since 2016)  to a localization problem in a characteristic free way.
We prove the validity of  a new variant of the Canonical Element Theorem  (CET)  and  explain how  a characteristic free    deduction of the new variant from the original CET  would provide  us with a characteristic free proof of the CET. 
  We also  show   that  the  Balanced Big Cohen-Macaulay Module Theorem   can  be settled by a characteristic free proof if   the big Cohen-Macaulayness of  Hochster's modification module can be deduced  from the existence of a maximal Cohen-Macaulay complex in a characteristic free way.
\end{abstract}


\tableofcontents

\section{Introduction}

Let $(R,\fm_R)$ be  a   local ring of dimension $n$. The aim of this  paper is  to provide some results that we hope  can  be   useful for obtaining   a   characteristic free proof of the following   celebrated results in Commutative Algebra, all of which have been shown to be equivalent.

\textbf{Direct Summand Theorem (DST):} If $R$ is module finite over a regular local subring $P$, then the inclusion $P\rightarrow R$ splits in the category of $P$-modules.

\textbf{Monomial Theorem (MT):}  Let   $x_1,\ldots,x_n$ be a system of parameters for $R$. Then  $$\forall\ t\ge 1:\ x_1^t\cdots x_n^t\notin (x_1^{t+1},\ldots,x_n^{t+1}).$$  


\textbf{Canonical Element Theorem  (CE Property):} Let $F_\bullet$ be a minimal free resolution of $R/\mam_R$ and let  $\Kt_\bullet(\mathbf{x},R)$ be the Koszul complex of a system of parameters $\mathbf{x}$ of $R$. Then  $\phi_n\neq 0$ for any lifting  $\phi_\bullet:\Kt_\bullet(\mathbf{x},R)\rightarrow F_\bullet$  of the natural epimorphism $R/\mathbf{(x)}\rightarrow R/\mam_R$.

\textbf{Canonical Element Theorem (CET):} Let $F_\bullet$ be a free resolution of $R/\fm_R$ and consider the natural epimorphism $\pi:F_n\rightarrow \text{syz}_n^R(R/\fm_R)$. 
Let $\eta_R$  be the image of $[\pi]$ under the natural map   $$\phi:\ext^n_{R}\big(R/\fm_R,\text{syz}_n^R(R/\fm_R)\big)\rightarrow \lim\limits_{\overset{\longrightarrow}{m\in \mn}}\ext^n_{R}\big(R/\fm_R^m,\text{syz}_n^R(R/\fm_R)\big)=\Ht^n_{\fm_R}\big(\text{syz}_n^R(R/\fm_R)\big).$$  Then $\eta_R\neq 0$.

\textbf{Improved New Intersection Theorem (INIT):} Let  $F_\bullet:0\rightarrow F_s\rightarrow F_{s-1}\rightarrow \cdots \rightarrow F_0\rightarrow 0$ be a complex of finite free $R$-modules such that $\ell\big(\Ht_i(F_\bullet)\big)<\infty$ for each $i\ge 1$ and $\Ht_0(F_\bullet)$ has a minimal generator whose generated $R$-module has finite length. Then $s\ge n$.

 The  general form of the Direct Summand Theorem states that if $R'$ is a (not necessarily local) Noetherian ring that is a module finite extension of a (not necessarily local) regular ring  $P$, then $P\rightarrow R'$ splits (see \cite[Theorem (6.1)(1)]{HochsterCanonical}). However, this general case is reduced to our aforementioned local case: By arguments in \cite[last paragraph of page 539 and first paragraph of page 540]{HochsterCanonical} one can assume that $P$ is a complete regular local ring and that the  module finite extension $R'$ of $P$ is a domain. Such a module finite extension is necessarily local by \cite[Theorem 7]{CohenOnTheStructure}. 
 
 For more equivalent forms of the MT see \cite{DuttaTheMonomial}, \cite{DuttaTheMonomialII}, \cite{DuttaLocalCohomology}, \cite{OhiDirectSummand}, \cite[Theorem 4.8]{AvramovIyengarNeemanBigCohenMacaulay}, \cite[Theorem (Rob$_2$), Fact (P. Roberts), page 687]{KohLeeSomeRestrictions},  \cite[Theorem 2]{SimonStrookerStiffness}, \cite{StrookerStuckrudMonomialConjecture} and \cite[Proposition 2.11]{TavanfarAnnihilators}. 
 
 In 1973, Hochster \cite{HochsterContracted} proposed the Direct Summand Conjecture and  proved it in the case, where $R$ contains a field. In 2002, Heitmann's breakthrough \cite{HeitmannTheDirectSummand} suggested that in the mixed characteristic case the conjecture could be settled using almost ring theory. In that case, the conjecture  remained open until 2016 when Yves Andr\'e  \cite{AnLaConjecture} proved it     using  perfectoid spaces. Thereafter, Bhargav Bhatt \cite{BhattOnTheDirectSummand} proved the derived variant of it, again using  perfectoid spaces.  Perfectoid algebras are mixed characteristic and non-Noetherian algebras. In the equicharacteristic case,    proofs are known that do not use non-Noetherian rings. However, the known proofs  still depend on the characteristic of $R$.   In prime characteristic, one proof uses   Grothendieck's non-Vanishing Theorem together with the Frobenius homomorphism. In equal-characteristic zero, the splitting is obtained using the normalized trace map.

 
 
 

For the following implications,  proofs have been found that are characteristic free and   do not use non-Noetherian rings.

\begin{center}\begin{tikzcd}[   ]
	\text{CET for \ } R\arrow[Leftrightarrow, "\text{\cite[Theorem (3.15)]{HochsterCanonical}}"]{rrrrr}  &&&&& 
	\text{}\ \text{CE Property  for } R 
	\arrow[Leftrightarrow,"\text{\cite[Theorem (2.6)]{HochsterCanonical} and \cite{DutaOnTheCanonical}}"]{rrrrr} \arrow[Rightarrow, 
	shift left=0.9ex, 
	"\text{\cite[Remarks (2.2)(7), page 506]{HochsterCanonical}}"]{d}  
	&&&&&  \text{INIT for } R \\ 
	&&&&& \text{MT\ for\ }R\arrow[Rightarrow,
	 shift left=0.9ex, 
	"\text{\cite[Theorem 1]{HochsterContracted}}"]{d} \\
	&&&&& \text{DST\ for\ } R
	\end{tikzcd}\end{center}


On the other hand, for deducing the CE Property from the DST for a residually prime characteristic local ring,  the only known  proof needs the assumption that every local ring with prime characteristic residue field satisfies the DST  and the  proof  uses non-Noetherian rings (see \cite[Theorem (2.9)]{HochsterCanonical}).

In  Section \ref{SectionMainResults},  we reduce the existence  of  a characteristic free proof of the CE Property (and hence of  all other aforementioned  conjectures) which avoids the use of non-Noetherian rings to the well-behavior of the CE Property under  localization  (see Theorem \ref{MonomialTheoremCharacteristicFree} and its sufficient condition (\ref{itm:SufficentConditionLocalization})). 
 Moreover, in Section \ref{SectionNewVariant}, we present a  new variant of the Canonical Element Theorem (see Proposition-Definition \ref{PropositionVarientCET}) and we prove its validity (Corollary \ref{CorollaryVariantHolds}). Our proof of the validity of this new variant is based on the existence of a balanced big Cohen-Macaulay module that behaves well under  localization, and for this purpose we prove that the balanced big Cohen-Macaulay property for modules over excellent normal domains behaves well under  localization  (Theorem \ref{TheoremBalancedBigCMModuleLocalizes}\ \footnote{Alternatively, we could apply \cite[Proposition 2.11]{BaMaPaScTuWaWi21}. We refer to  Remark \ref{RemarkWhyLocalizingBBCMModules}  for some comments on the utility of Theorem \ref{TheoremBalancedBigCMModuleLocalizes} in parallel to  \cite[Proposition 2.11]{BaMaPaScTuWaWi21}.
 }). 
Furthermore, a possible characteristic free deduction  of  the new variant of the Canonical Element Theorem  from the original Canonical Element Theorem, if provable,  can provide us with a characteristic free proof of the Canonical Element Theorem  as shown in Theorem \ref{MonomialTheoremCharacteristicFree} (and its sufficient condition (\ref{itm:SufficientConditionExcellentUFD})).

As  another main result of the paper, applying    \cite[Proposition 4.3]{IyengarMaSchwedeWalker} and Theorem \ref{MonomialTheoremCharacteristicFree}, we present a sufficient condition for obtaining a characteristic free proof for the existence of  balanced big Cohen-Macaulay modules (Corollary \ref{CorollaryBigCohenMacaulay}).

We end the introduction by  outlining the main steps of the  proof of Theorem \ref{MonomialTheoremCharacteristicFree}.

\begin{enumerate}[(i)]
	\item  	\label{itm:GQorensteinReduction}
	We reduce the CE Property to the case  where $R$ is  a complete quasi-Gorenstein domain that is a locally complete intersection in codimension $\le 1$   by  considering the same $R$-algebra as in the proof of  \cite[Proposition 2.7]{TavanfarReduction}. 

	%

    \item \label{itm:GenericLinkageOk} Suppose that $R$ is a quasi-Gorenstein domain as in (i), and that $S$ is the localization at the homogeneous maximal ideal of a generic linkage of $R$   in the sense of Huneke and Ulrich \cite{HunekeUlrichDivisorClass}. Then $S$ is an almost complete intersection domain that is a  locally complete intersection in codimension $\le 1$. Since $S$ is an almost complete intersection  domain with enough depth, we can apply  Lemmas \ref{AttLocalCohomologyDepthCanonicalModule} and \ref{DeformationOFCEC}(\ref{itm:CETandDeformation}) and an inductive argument to see that it satisfies the CE Property (cf. \cite[step 2, page 238]{DuttaGriffithIntersectionMultiplicity} for an analogous result with a different proof which seems to be characteristic dependent as  it calls upon the proof of  \cite[Theorem 3.2]{DutaOnTheCanonicalII}).

	\item \label{itm:Bridge} Suppose that $R$ is  a  quasi-Gorenstein domain as in (i). By the definition of generic linkage, there exists an excellent   regular local ring $A$ and linked ideals $\fa$ and $\fb$ of $A$ such that  $S:=A/\fa$ is as in (ii)    while $A/\fb$  is a trivial deformation of $R$.  We use the (factorial) extended Rees algebra $A[\fa t,t^{-1}]$ ($A[\fa t]\cong \text{Sym}_A(\fa)$) of $\fa$  as a bridge between $S$ and $A/\fb$    to deduce the  CE Property for  $A/\fb$ (and thus for $R$) from the CE Property for $S$, under the validity of either of the two conditions of Theorem \ref{MonomialTheoremCharacteristicFree}.  The shape of this bridge can be observed via Proposition \ref{StructureProposition}(\ref{itm:QuotientOfEReesAlgebraIsACI}) together with   Lemma  \ref{LemmaEquationOfSymmetricAlgebra}(\ref{itm:ColonIdealAsaSpecilizationOfTheSymmetricAlgebra}).  
	 Here we  make an essential use of  Parts (\ref{itm:HomogeneousMaximalRegularSequenceOfEReesAlgebra})  and (\ref{itm:SpecializationOfEReesAlgebraIsS2}) of Proposition \ref{StructureProposition} by which we were able to prove that the extended Rees algebra $A[\fa t,t^{-1}]$ also satisfies the CE Property. 
	 The imposed conditions in Theorem \ref{MonomialTheoremCharacteristicFree} are only used for  the deduction of the CE Property for the localization $A[\fa t,t^{-1}]_{ht}$ of $A[\fa t,t^{-1}]$ from the CE Property of $A[\fa t,t^{-1}]$ (Step 7 of the proof of Theorem \ref{MonomialTheoremCharacteristicFree}). In our proof of the CE Property, this deduction is thus the only obstacle to obtaining a characteristic free proof without any imposed condition. 
	 

\end{enumerate}

In Part (\ref{itm:Bridge}) (above) 
we tried to follow the philosophy of Strooker and  St\"ukrad  \cite[page 158]{StrookerStuckrudMonomialConjecture} who asked the following analogous question: If $B$ and $B'$ are linked local rings and $B'$ satisfies the Small-Cohen-Macaulay Conjecture, does  $B$    also satisfy the Small Cohen-Macaulay Conjecture?  Lemma  \ref{LemmaEquationOfSymmetricAlgebra}(\ref{itm:ColonIdealAsaSpecilizationOfTheSymmetricAlgebra})  as well as the use of generic linkage  were inspired   by  (the quasi-Gorenstein counterpart of)  Ulrich  \cite{UlrichGorenstein}, as mentioned in our earlier paper \cite{TavanfarReduction}. 
The generic linkage yields   the linear type ideal  $\fa$   with factorial extended Rees Algebra   $A[\fa t,t^{-1}]$ (here \cite{HochsterCriteria} and \cite{HunekeAlmost} are also used).

\section{Preliminaries}

In this paper,  rings are Noetherian and commutative  with identity.  Graded rings and graded modules  appearing in this paper are  $\mathbb{Z}$-graded  and  often graded rings are \textit{graded local} in the sense that  they admit a unique homogeneous  ideal that is maximal with respect to all (homogeneous and non-homogeneous) proper ideals.   For any $i\in \mathbb{Z}$ the $i$-component of a graded module $M$ consisting of elements of degree $i$ is denoted by $M_{[i]}$. If $(F_\bullet,\partial^{F_\bullet}_\bullet)$ is a free resolution of a module $M$, then  $\text{syz}^{F_\bullet}_i(M):=F_i/\text{im}(\partial^{F_\bullet}_{i+1})$ for any $i\ge 1$. Let $\mathbf{x}:=x_1,\ldots,x_t$ be a sequence of elements of a ring $S$ and $M$ be an $S$-module. The notation $\mathbf{x}^v$ (respectively, $\mathbf{x}^{\underline{v}}$ where $\underline{v}=(v_1,\ldots,v_t)\in \mathbb{N}^t$) denotes the sequence $x_1^v,\ldots,x_t^v$ (respectively, $x_1^{v_1},\ldots,x_t^{v_t}$). For the definition   of the Koszul complex $K_\bullet(\mathbf{x};M)$ (and the Koszul homologies $H_i(\mathbf{x};M)$) we follow \cite[Definition 1.6.1]{BrunsHerzogCohenMacaulay} together with \cite[The Koszul Complex of a Sequence, page 46]{BrunsHerzogCohenMacaulay}.

\subsection{Preliminaries on   the Koszul complexes}
The following lemmas concerning the Koszul complex are well-known, but we prove them  for the convenience of the reader.

\begin{lem} \label{LemmaBasicFactInHomologicalAlgebra} Let $\mathbf{x}:=x_1,\ldots,x_d$ be a sequence of elements of a ring $S$ and $$(F_\bullet,\partial^{F_\bullet}_{\bullet}):=\cdots \rightarrow F_n\overset{\partial^{F_\bullet}_n}{\rightarrow} \cdots \rightarrow F_{n-1}\overset{\partial^{F_\bullet}_{n-1}}{\rightarrow}\cdots  \rightarrow F_1\overset{\partial^{F_\bullet}_1}{\rightarrow} F_0\rightarrow 0$$ be an acyclic complex. Suppose that $\varphi_\bullet,\varphi_\bullet':K_\bullet(\mathbf{x};S)\rightarrow F_\bullet$ are  chain maps of complexes such that 
	$\overline{\varphi_0},\overline{\varphi'_0}:S/\mathbf{x}S\rightarrow H_0(F_\bullet)$
	coincide. 
	\begin{enumerate}[(i)]	
		\item\label{itm:NullHomotopic} 	$\varphi_0-\varphi'_0$ is null-homotopic. 
		\item\label{itm:RelationInSyz} $\varphi_d(1)+\text{im}(\partial^{F_\bullet}_{d+1})-\varphi'_d(1)+\text{im}(\partial^{F_\bullet}_{d+1})\in (x_1,\ldots,x_d)\big(F_{d}/\text{im}(\partial^{F_\bullet}_{d+1})\big)$.
	\end{enumerate}
	\begin{proof}	
		(\ref{itm:NullHomotopic})	 Let $\pi:F_0\rightarrow H_0(F_\bullet)$ be the natural epimorphism. By our hypothesis, $\pi(\varphi_0-\varphi'_0)=0$. As $K_0(\mathbf{x};S)$ is free and $$F_1\overset{\partial^{F_\bullet}_1}{\rightarrow} F_0\overset{\pi}{\rightarrow} H_0(F_\bullet)$$ is   exact, there exists $g_0:K_0(\mathbf{x};S)\rightarrow F_1$ such that $\partial^{F_\bullet}_1\circ g_0=\varphi_0-\varphi'_0$. Set, $g_{-1}=0$ as well. Assume  inductively that for some $i\ge 0$,  $g_{i}:K_{i}(\mathbf{x};S)\rightarrow F_{i+1}$ and $g_{i-1}:K_{i-1}(\mathbf{x};S)\rightarrow F_{i}$ are constructed such that  
		\begin{equation}
		\label{Label}
		\varphi_{i}-\varphi_{i}'=\partial^{F_\bullet}_{i+1}\circ g_i+g_{i-1}\circ \partial^{K_\bullet(\mathbf{x};S)}_{i}.
		\end{equation}
		It suffices to find $g_{i+1}:K_{i+1}(\mathbf{x};S)\rightarrow F_{i+2}$ with the desired property.  But,
		\begin{align*}
		\partial^{F_\bullet}_{i+1}\circ\big((\varphi_{i+1}-\varphi'_{i+1})-(g_{i}\circ \partial^{K_\bullet(\mathbf{x};S)}_{i+1})\big)
		&
		=\partial^{F_\bullet}_{i+1}\circ(\varphi_{i+1}-\varphi'_{i+1})-\partial^{F_\bullet}_{i+1}\circ(g_{i}\circ \partial^{K_\bullet(\mathbf{x};S)}_{i+1})
		&\\&
		=\partial^{F_\bullet}_{i+1}\circ(\varphi_{i+1}-\varphi'_{i+1})-(\partial^{F_\bullet}_{i+1}\circ g_{i})\circ \partial^{K_\bullet(\mathbf{x};S)}_{i+1}
		&\\&
		\overset{\text{Eq.\ }(\ref{Label})}{=} 
		\partial^{F_\bullet}_{i+1}\circ(\varphi_{i+1} - \varphi'_{i+1})
		-(\varphi_{i}-\varphi_{i}'-g_{i-1}\circ \partial^{K_\bullet(\mathbf{x};S)}_{i})\circ \partial^{K_\bullet(\mathbf{x};S)}_{i+1}
		&\\&
		= 
		\partial^{F_\bullet}_{i+1}\circ(\varphi_{i+1} - \varphi'_{i+1})
		-(\varphi_{i}-\varphi_{i}')\circ \partial^{K_\bullet(\mathbf{x};S)}_{i+1}
		&\\&
		=\varphi_i\circ \partial^{K_\bullet(\mathbf{x};S)}_{i+1}-\varphi'_i\circ \partial^{K_\bullet(\mathbf{x};S)}_{i+1}-(\varphi_{i}-\varphi_{i}')\circ \partial^{K_\bullet(\mathbf{x};S)}_{i+1}
		&\\&
		=0
		\end{align*}
		Thus we get the map  $$(\varphi_{i+1}-\varphi'_{i+1})-(g_{i}\circ \partial^{K_\bullet(\mathbf{x};S)}_{i+1}):K_{i+1}(\mathbf{x};S)\rightarrow \ker(\partial^{F_\bullet}_{i+1}).$$ Thence, since $F_{i+2}\overset{\partial^{F_\bullet}_{i+2}}{\rightarrow} F_{i+1}\overset{\partial^{F_{\bullet}}_{i+1}}{\rightarrow} F_i$ is exact,  there exists some $g_{i+1}:K_{i+1}(\mathbf{x};S)\rightarrow F_{i+2}$ such that $\partial^{F_\bullet}_{i+2}\circ g_{i+1}=(\varphi_{i+1}-\varphi'_{i+1})-(g_{i}\circ \partial^{K_\bullet(\mathbf{x};S)}_{i+1})$ as required.
		
		(\ref{itm:RelationInSyz}) Since $\varphi_\bullet$ and $\varphi'_\bullet$ are null-homotopic by the previous part, following the notation of the previous part, we have $\varphi_d(1)-\varphi'_d(1)=\partial^{F_\bullet}_{d+1}\circ g_d(1) + g_{d-1}\circ \partial^{{K_\bullet(\mathbf{x};S)}}_{d}(1)$. So modulo $\text{im}(\partial^{F_\bullet}_{d+1})$ the image of $\varphi_d(1)-\varphi'_d(1)$ coincides with the image of $$g_{d-1}\circ \partial^{{K_\bullet(\mathbf{x};S)}}_{d}(1)=g_{d-1}(x_1,-x_2,\ldots,(-1)^{d+1}x_d)\in (x_1,\ldots,x_d)F_{d}.\qedhere$$
	\end{proof}
\end{lem}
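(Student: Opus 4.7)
The plan is to recognize this as the standard comparison-theorem machinery from homological algebra, adapted to the setting where the source is a complex of free modules (not assumed exact) and the target is an acyclic complex. The hypothesis that $\overline{\varphi_0}$ and $\overline{\varphi'_0}$ agree on $H_0$ lets me start an inductive construction of a chain homotopy, using freeness of each $K_i(\mathbf{x};S)$ and exactness of $F_\bullet$ at each stage.

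For part (i), I will construct $S$-linear maps $g_i : K_i(\mathbf{x};S) \to F_{i+1}$ by induction on $i\ge 0$, with the convention $g_{-1}=0$, satisfying
\[
\varphi_i - \varphi'_i \;=\; \partial^{F_\bullet}_{i+1}\circ g_i \;+\; g_{i-1}\circ \partial^{K_\bullet(\mathbf{x};S)}_i.
\]
Base case: since $\overline{\varphi_0}=\overline{\varphi'_0}$, the image of $\varphi_0-\varphi'_0$ is contained in $\ker(F_0\twoheadrightarrow H_0(F_\bullet))=\operatorname{im}(\partial^{F_\bullet}_1)$, and freeness of $K_0(\mathbf{x};S)=S$ produces a lift $g_0$. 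Inductive step: having constructed $g_0,\ldots,g_i$, I consider
\[
h \;:=\; (\varphi_{i+1}-\varphi'_{i+1}) - g_i\circ \partial^{K_\bullet(\mathbf{x};S)}_{i+1}\colon\; K_{i+1}(\mathbf{x};S) \longrightarrow F_{i+1}.
\]
A direct computation using (a) the chain-map property of $\varphi_\bullet$ and $\varphi'_\bullet$, (b) the induction hypothesis, and (c) $\partial^{K_\bullet(\mathbf{x};S)}_i\circ \partial^{K_\bullet(\mathbf{x};S)}_{i+1}=0$, yields $\partial^{F_\bullet}_{i+1}\circ h=0$. By acyclicity of $F_\bullet$ the image of $h$ lies in $\operatorname{im}(\partial^{F_\bullet}_{i+2})$, and freeness of $K_{i+1}(\mathbf{x};S)$ furnishes the required $g_{i+1}$.

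For part (ii), I specialize the homotopy identity to degree $d$ and evaluate at the generator $1\in K_d(\mathbf{x};S)\cong S$. Modulo $\operatorname{im}(\partial^{F_\bullet}_{d+1})$ one obtains
\[
\varphi_d(1)-\varphi'_d(1)\;\equiv\; g_{d-1}\!\left(\partial^{K_\bullet(\mathbf{x};S)}_d(1)\right)\pmod{\operatorname{im}(\partial^{F_\bullet}_{d+1})}.
\]
Because $\partial^{K_\bullet(\mathbf{x};S)}_d(1)=(x_1,-x_2,\ldots,(-1)^{d+1}x_d)\in K_{d-1}(\mathbf{x};S)=S^d$, the right-hand side is an $S$-linear combination of basis images with coefficients $\pm x_j$ and hence lies in $(x_1,\ldots,x_d)F_d$; reducing modulo $\operatorname{im}(\partial^{F_\bullet}_{d+1})$ yields the asserted containment in $(x_1,\ldots,x_d)\bigl(F_d/\operatorname{im}(\partial^{F_\bullet}_{d+1})\bigr)$.

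The argument is essentially formal bookkeeping; the only point demanding care is the verification of $\partial^{F_\bullet}_{i+1}\circ h=0$ in the inductive step, where the chain-map identities for $\varphi_\bullet,\varphi'_\bullet$ must be combined with the induction hypothesis in the right order. I do not expect a genuine obstacle: the two structural inputs, freeness of each $K_i(\mathbf{x};S)$ and acyclicity of $F_\bullet$, are precisely what the standard comparison-theorem construction needs, and it is worth emphasising that $K_\bullet(\mathbf{x};S)$ is \emph{not} required to be a resolution of $S/\mathbf{x}S$ for this argument.
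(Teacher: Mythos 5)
Your proposal is correct and takes essentially the same route as the paper's own proof: both construct the chain homotopy $g_i$ by the same induction (using freeness of $K_i(\mathbf{x};S)$ and acyclicity of $F_\bullet$ to solve each lifting problem), and both deduce part (ii) by evaluating the degree-$d$ homotopy identity at $1\in K_d(\mathbf{x};S)$ and observing that $\partial^{K_\bullet(\mathbf{x};S)}_d(1)=(x_1,-x_2,\ldots,(-1)^{d+1}x_d)$ forces the residue class into $(x_1,\ldots,x_d)\bigl(F_d/\operatorname{im}(\partial^{F_\bullet}_{d+1})\bigr)$. The only cosmetic difference is that you package the map in the inductive step as a single symbol $h$ rather than writing it out.
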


\begin{lem}\label{LemmaInducedMapOnKoszulHomolgoies}
	Let $S$ be a ring and  let $\mathbf{x}:=x_1,\ldots,x_d$ and $\mathbf{y}:=y_1,\ldots,y_d$ be sequences of elements of  $S$ such that $(\mathbf{y})\subseteq (\mathbf{x})$.
	\begin{enumerate}[(i)]
		\item\label{itm:GeneralSubIdealSequence} 
		There exists a chain map $\varphi_\bullet:K_\bullet(\mathbf{y};S)\rightarrow K_\bullet(\mathbf{x};S)$ of Koszul complexes such that $\varphi_0=\id_{S}$. In particular, $\varphi_\bullet$ lifts the natural epimorphism $S/(\mathbf{y})\rightarrow S/(\mathbf{x})$.
		\item\label{itm:PowerOfX} Suppose that $\mathbf{y}=\mathbf{x}^{\underline{v}}$ for some $\underline{v}=(v_1,\ldots,v_d)\in \mathbb{N}^d$. Then  a chain map $\varphi_\bullet:K_\bullet(\mathbf{x}^{\underline{v}};S)\rightarrow K_\bullet(\mathbf{x};S)$ exists such that $\varphi_0=\id_S$, and moreover   $\varphi_{d}:S\rightarrow S$  is the multiplication map by $x_1^{v_1-1}\cdots x_d^{v_d-1}$ ($S=K_d(\mathbf{x}^{\underline{v}};S)=K_d(\mathbf{x};S)$).
	\end{enumerate}
  \begin{proof}
  	(\ref{itm:GeneralSubIdealSequence}) For each $1\le i\le d$, by our hypothesis  there are elements $s_{1,i},\ldots,s_{d,i}$ such that $y_i=\sum\limits_{j=1}^ds_{ji}x_{j}$. Let $\mathbf{A}:=\begin{pmatrix}s_{1,1}&s_{1,2}&\cdots&s_{1,d}\\ s_{2,1}&s_{2,2}&\cdots&s_{2,d}\\\vdots&\vdots&\vdots&\vdots\\s_{d,1}&s_{d,2}&\cdots&s_{d,d}\end{pmatrix}$, $\mathbf{B}:=\begin{pmatrix}x_1&x_2&\cdots&x_d\end{pmatrix}$ and   $\mathbf{C}:=\begin{pmatrix}y_1&y_2&\cdots&y_d\end{pmatrix}$ be matrices whose corresponding homomorphisms of free modules, $f_{\mathbf{A}},f_{\mathbf{B}}$ and $f_{\mathbf{C}}$ yield the commutative diagram   \ 
  		$\begin{CD} 
  		  S^d @>f_{\mathbf{C}}>> S\\
  		  @VVf_{\mathbf{A}}V @V\id_{S}VV\\
  		  S^d @>>f_{\mathbf{B}}> S
  	 \end{CD}$. Consequently, according to the definition of the Koszul complex  as well as \cite[Proposition 1.6.8 and its preceding paragraph]{BrunsHerzogCohenMacaulay}   $f_{\mathbf{A}}$ and $\id_{S}$ in the above commuting square extend to a chain map of Koszul complexes $\bigwedge f_{\mathbf{A}}:K_\bullet(\mathbf{y};S)\rightarrow K_\bullet(\mathbf{x};S)$ that is a homomorphism of $S$-DG algebras (see also \cite[page 41]{BrunsHerzogCohenMacaulay}, where $\bigwedge\varphi$ is defined). Note that since  $\bigwedge{f_{\mathbf{A}}}$ is an $S$-algebra homomorphism so $(\bigwedge{f_{\mathbf{A}}})_{0}=\id_{S}$. Hence the desired conclusion follows by setting $\varphi_{\bullet}:=\bigwedge{f_{\mathbf{A}}}$.

  	 (\ref{itm:PowerOfX}) In this case, we can set the matrix $\mathbf{A}$ to be the diagonal matrix $\text{diag}(x_1^{v_1-1},\ldots,x_d^{v_d-1})$. Hence, following the notation used in the proof of part  (\ref{itm:GeneralSubIdealSequence}) we have 
  	 \begin{align*}
  	   \varphi_d(1_{K_{d}(\mathbf{x}^{\underline{v}};S)})
  	   &=
  	   (\bigwedge f_{\mathbf{A}})_d(e_1\wedge\cdots\wedge e_d)
  	   &\\&=
  	   f_{\mathbf{A}}(e_1)\wedge\cdots\wedge f_{\mathbf{A}}(e_d) && (\text{see \cite[page 41]{BrunsHerzogCohenMacaulay}, where $\bigwedge \varphi$ is defined})
  	   &\\&=
  	   (x_1^{v_1-1}\cdots x_{d}^{v_d-1})(e_1\wedge\cdots \wedge e_d)
  	   &\\&=
  	   (x_1^{v_1-1}\cdots x_{d}^{v_d-1})1_{K_{d}(\mathbf{x};S)}. &&\qedhere
  	 \end{align*}
  \end{proof}
\end{lem}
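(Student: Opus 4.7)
The plan is to exploit the standard functoriality of the Koszul complex viewed as an exterior algebra. Recall that $K_\bullet(\mathbf{x};S)$ is (up to isomorphism) the exterior algebra $\bigwedge S^d$ endowed with a differential determined by the linear form $\mathbf{x}: S^d \to S$ sending $e_j \mapsto x_j$; the higher differentials are then forced by the derivation property. So to build a chain map $K_\bullet(\mathbf{y};S) \to K_\bullet(\mathbf{x};S)$, it suffices to produce a commutative square in degrees $0$ and $1$ and then take exterior powers.

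First I would handle part (\ref{itm:GeneralSubIdealSequence}). Since $(\mathbf{y}) \subseteq (\mathbf{x})$, for each $i$ I can pick $s_{1i},\ldots,s_{di} \in S$ with $y_i = \sum_{j=1}^d s_{ji} x_j$. Assembling these into a $d\times d$ matrix $A = (s_{ji})$ yields a linear map $f_A : S^d \to S^d$ fitting into a commutative square
\[
\begin{CD}
S^d @>f_{\mathbf{y}}>> S \\
@VV f_A V @VV \mathrm{id}_S V \\
S^d @>>f_{\mathbf{x}}> S
\end{CD}
\]
where $f_{\mathbf{x}}, f_{\mathbf{y}}$ denote the row-vector maps associated to $\mathbf{x}$ and $\mathbf{y}$. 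Applying the exterior algebra functor to the pair $(f_A, \mathrm{id}_S)$ gives a homomorphism of $S$-DG algebras $\varphi_\bullet := \bigwedge f_A : K_\bullet(\mathbf{y};S) \to K_\bullet(\mathbf{x};S)$; that this is a chain map (not merely degreewise) is forced by compatibility of $\bigwedge f_A$ with the Koszul differentials, which are precisely the derivations determined in degree $1$. In degree $0$ we automatically have $\varphi_0 = \mathrm{id}_S$ (since $\bigwedge f_A$ is an algebra map), which then lifts the natural surjection $S/(\mathbf{y}) \to S/(\mathbf{x})$.

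For part (\ref{itm:PowerOfX}), in the special case $\mathbf{y} = \mathbf{x}^{\underline{v}}$ there is a particularly nice choice: since $x_i^{v_i} = x_i^{v_i-1}\cdot x_i$, I would take $A$ to be the diagonal matrix $\mathrm{diag}(x_1^{v_1-1}, \ldots, x_d^{v_d-1})$. Then in the top exterior power the induced map is multiplication by $\det(A) = x_1^{v_1-1}\cdots x_d^{v_d-1}$, giving the desired formula for $\varphi_d$.

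I do not anticipate a serious obstacle in this lemma: the only point requiring care is the assertion that the degree-$1$ compatibility $f_{\mathbf{x}} \circ f_A = f_{\mathbf{y}}$ is enough to produce a genuine chain map in all degrees. I would justify this by appealing to the standard fact that the Koszul complex is the exterior algebra on a linear map, so any morphism of such linear maps extends uniquely and functorially to a morphism of exterior algebras respecting the induced differentials (as recorded, for instance, in the functorial construction of $\bigwedge \varphi$ in Bruns--Herzog).
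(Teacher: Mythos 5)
Your argument is correct and follows essentially the same route as the paper: choose a matrix $\mathbf{A}=(s_{ji})$ expressing $\mathbf{y}$ in terms of $\mathbf{x}$, form the commutative square in degrees $0$ and $1$, and apply the exterior algebra functor $\bigwedge f_{\mathbf{A}}$ to obtain an $S$-DG algebra chain map with $\varphi_0=\id_S$; for part (\ref{itm:PowerOfX}) both you and the paper take $\mathbf{A}$ diagonal, and your observation that $\varphi_d$ is multiplication by $\det(\mathbf{A})$ coincides with the paper's direct computation $f_{\mathbf{A}}(e_1)\wedge\cdots\wedge f_{\mathbf{A}}(e_d)$.
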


\subsection{Preliminaries on   quasi-Gorenstein and  almost complete intersection rings, etc}

 For the definition and some properties of the  canonical module $\omega_{S}$ of a local ring $S$  see \cite[ Chapter 12]{BrodmannSharpLocal} (see also \cite{AoyamaSomeBasic}).

 \begin{rem}\label{RemarkCanonicalModuleColonIdealPresentation}
 \emph{Let $S$ be a  local ring  which is  a homomorphic image of a   Gorenstein local ring $G$. Suppose that  $S=G/\mathfrak{g}$ and that $l=\text{height}(\mathfrak{g})$.   By \cite[Remarks 12.1.3(iii)]{BrodmannSharpLocal} and \cite[Corollary 2.1.4]{BrunsHerzogCohenMacaulay}, $\omega_{S}\cong\ext^{l}_{G}(S,G).$ In particular,  if $\mathfrak{c}$ is a complete intersection contained in $\mathfrak{g}$ with the same height as  $\mathfrak{g}$, then
  \begin{equation}
   \label{EquationCanonicalModule}
   \omega_{S}\cong\ext^l_G(S,G)\overset{\text{\cite[Lemma 2(i), page 140]{Matsumura}}}{\cong} \homm_{G}(G/\mathfrak{g},G/\mathfrak{c})\cong (\fc:\mathfrak{g})/\fc.
 \end{equation}}
\end{rem}

\begin{defi}
	\emph{An  ideal 
		 $\fa$ of a   
		 regular local ring $A$ is called an \textit{almost complete intersection ideal}  if $\mu(\fa)=\text{height}(\fa)+1$, where $\mu$ denotes the minimal number of generators. An \textit{almost complete intersection ring}  is a residue ring of a   
		 regular local ring by an almost complete intersection ideal.}
\end{defi}

Note that if $\fa$ is an almost complete intersection ideal of a regular local ring $A$, then by    \cite[Exercise 1.2.21]{BrunsHerzogCohenMacaulay} there exists a complete intersection ideal $\fc$ of $A$ with the same height as  $\fa$ such that $\fa=\fc+(h)$ for some $h\in A$.

\begin{defi} \emph{A local ring $S$ is called \textit{quasi-Gorenstein} if it admits a  canonical module that is a   free module, necessarily of rank $1$.  An ideal $\fb$ of a regular local ring $A$ is called a \textit{quasi-Gorenstein ideal} if $A/\fb$ is a quasi-Gorenstein ring.}
\end{defi}

\begin{defi}
	\emph{Let $\fa$ be an 
		ideal of a 
		   ring $A$. Then   $\fa$ is called \textit{unmixed} if $\dim(A/\fp)=\dim(A/\fa)$ for all $\fp\in \text{ass}(\fa)$.}
\end{defi}

\begin{defi}\label{DefinitionRemarkLinkage} \emph{Let $A$ be a regular (graded) local ring and $\fa,\fb$ be (homogeneous)  ideals of $A$. Then we say that $\fa$ and $\fb$ are \textit{linked} over a (homogeneous) complete intersection ideal $\fc\subseteq \fa\cap \fb$ provided $\fb=\fc:\fa,\ \fa=\fc:\fb$. In this case, we say that $A/\fa$ and $A/\fb$ are \textit{linked rings}.
}
\end{defi}

\begin{rem}\label{RemarkLinkedSoTheSameHeight}
	\emph{From the above definition of  linkage, it is easily concluded  that  $\text{height}(\fa)=\text{height}(\fc)=\text{height}(\fb)$ provided the ideals $\fa$ and $\fb$ are     linked over the complete intersection $\fc$ and $\fa,\fb$ are proper (because any complete intersection ideal is unmixed). Moreover,  linked ideals are necessarily unmixed (see \cite[Lemma 2.1]{LynchAnnihilators}).}	
\end{rem}

 Some of our results in this  paper exploit the linkage of unmixed almost complete intersection ideals and quasi-Gorenstein ideals.  
 Although the next lemma is well-known to experts,  we include it in our paper for the convenience of the reader. 

\begin{lem}\label{RemarkLinkageOfQuasiGorensteinIdealsAndAlmostCompleteIntersectionIdeals}
	\emph{Let $A$ be a regular local ring. 
	\begin{enumerate}[(i)]
		\item\label{itm:LinkageOfQGrenseteinIsAlmostCI}   Let $\fb$ be a quasi-Gorenstein ideal of $A$ of height $g$ and  $\fc$ be a height $g$ complete intersection ideal contained properly in $\fb$. Then $\fa:=\fc:\fb$ is an unmixed almost complete intersection ideal  $\fa=\fc+(h)\subset A$. Moreover, $\fa$ and  $\fb$ are linked over $\fc$. 
		\item\label{itm:ACIIsLinkedToQG}  Let $\fa=\fc+(h)$ be a height $g$ unmixed almost complete intersection ideal of $A$  containing the complete intersection ideal $\fc$ of height $g$. Then $\fb:=\fc:\fa$ is a quasi-Gorenstein ideal. Moreover, $\fa$ and $\fb$ are linked over $\fc$.
\end{enumerate}}
 \begin{proof}
		\item [(\ref{itm:LinkageOfQGrenseteinIsAlmostCI})]   Since the canonical module is always $S_2$ (\cite[(1.10)]{AoyamaSomeBasic}) and $A/\fb\cong \omega_{A/\fb}$, so  $A/\fb$ satisfies the $(S_2)$-condition.  
		Immediately $\fb$ is an unmixed ideal since $A/\fb$ is $(S_1)$ and equidimensional. 
		 Then $\fa$ and $\fb$ are linked  by \cite[Proposition 2.2]{SchezelNotes} (the term ``pure height'' in the statement of  \cite[Proposition 2.2]{SchezelNotes} points to the same concept of unmixedness as in our paper) and so $\fa$ is unmixed by Remark \ref{RemarkLinkedSoTheSameHeight}.
		 	 Also in view of the display  (\ref{EquationCanonicalModule}),  $\fa/\fc\cong \omega_{A/\fb}\cong A/\fb$ 
		 	 and so 
		 	 $\fa/\fc$ is a cyclic module. Hence $\fa=\fc+(h)$ (generated minimally, so $\mu(\fa)=g+1$) for some $h\in A$.  Note that $\fc+(h)$ is not a complete intersection, since otherwise $\fb=\fc:\fa=\fc$ violates our hypothesis that $\fc\subsetneq \fb$. Consequently, $\fa$ is an almost complete intersection. 
		 
	   \item [(\ref{itm:ACIIsLinkedToQG})] Again by \cite[Proposition 2.2]{SchezelNotes}, $\fa$ and $\fb$ are linked.
	    Thus, in view of the display   (\ref{EquationCanonicalModule}), $\omega_{A/\fb}
	    \cong \fa/\fc$.
	     Moreover, as $(\fc:h)=\fb,\ \fa/\fc\cong A/\fb$ and $A/\fb$ is quasi-Gorenstein.
  \end{proof}
\end{lem}

\begin{defi} \emph{Let $H$ be an Artinian $S$-module. A prime ideal $\fp$ of $S$ is called an \textit{attached prime ideal} of $A$ if $\fp=0:_{S}(H/N)$ for some submodule $N$ of $H$.  The set of attached prime ideals of $H$ is denoted by  $\text{Att}(H)$.}
\end{defi}

\begin{lem}\label{LemmaAttachedPrime} Let $H$ be an Artinian $S$-module.
  \begin{enumerate}[(i)]
	\item\label{itm:AttIsFinite}   $\text{Att}(H)$ is a  finite set.
	\item\label{itm:AttAndAss}  If $M$ is a finitely generated $S$-module and $(S,\fm_{S})$ is local, then  $$\text{Att}\Big(\text{Hom}_{S}\big(M,\text{E}(S/\fm_{S})\big)\Big)=\text{Ass}(M).$$ 
	\item\label{itm:AttAndSurjectivity}  Let $x\in S$. Then    $x\notin \bigcup\limits_{\fp\in \text{Att}(H)}\fp$ if and only if the multiplicative map by $x$ on $H$ is surjective.
  \end{enumerate}	
  \begin{proof}
  	 The statements follow from \cite[Chapter 7, Corollary 10.2.20,  Proposition 7.2.11(i), respectively]{BrodmannSharpLocal}.
  \end{proof}
\end{lem}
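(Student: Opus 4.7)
My plan is to base all three parts on the theory of secondary representations for Artinian modules, dual to primary decomposition. Recall that a nonzero $S$-module $N$ is $\fp$-secondary if every $x \in S$ acts on $N$ either surjectively (when $x \notin \fp$) or nilpotently (when $x \in \fp$); in this situation $\fp = \sqrt{\Ann_S(N)}$. Every Artinian $S$-module $H$ admits a minimal secondary representation $H = H_1 + \cdots + H_n$ with $H_i$ a $\fp_i$-secondary submodule and the primes $\fp_i$ pairwise distinct. A preliminary step, carried out by manipulating annihilators of subquotients, shows that the set $\{\fp_1,\dots,\fp_n\}$ depends only on $H$ and coincides with $\text{Att}(H)$ as defined in the paper.

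Granting this, (i) is immediate. For (iii), if $x \notin \bigcup_i \fp_i$ then multiplication by $x$ is surjective on each $H_i$, hence $xH = \sum_i xH_i = H$. Conversely, if $x \in \fp_j$, consider the quotient $\overline{H} := H/\sum_{i \neq j} H_i$; by minimality of the representation $\overline{H}$ is nonzero, and since it is a quotient of the $\fp_j$-secondary module $H_j$ it is itself $\fp_j$-secondary. Multiplication by $x$ on $\overline{H}$ is then nilpotent and, as $\overline{H} \neq 0$, not surjective, so $xH \neq H$.

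For (ii), set $(-)^\vee := \Hom_S\bigl(-, \text{E}(S/\fm_S)\bigr)$. Because $\text{E}(S/\fm_S)$ is $\fm_S$-torsion and $M$ is finitely generated, $M^\vee$ is Artinian, and $(-)^\vee$ is exact on finitely generated modules by injectivity of $\text{E}$. For $\Ass(M) \subseteq \text{Att}(M^\vee)$: any embedding $S/\fp \hookrightarrow M$ dualizes to a surjection $M^\vee \twoheadrightarrow (S/\fp)^\vee$, and $(S/\fp)^\vee$ is a faithful $S/\fp$-module on which each $x \notin \fp$ acts surjectively (lift across the injective $\text{E}$), hence it is $\fp$-secondary, realizing $\fp$ as an attached prime of $M^\vee$. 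For the reverse inclusion, I would pass to the completion $\hat S$, where Matlis duality is a contravariant equivalence between finitely generated and Artinian modules; $\Ass$ and $\text{Att}$ are compatible with the faithfully flat extension $S \to \hat S$, and over $\hat S$ each attached prime of $M^\vee$ comes, by a secondary-representation argument reversed, from an embedding $\hat S/\mathfrak{q} \hookrightarrow \hat M$ with $\mathfrak{q}$ contracting to $\fp$.

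The main obstacle is the preliminary step: showing that the primes appearing in any minimal secondary representation coincide with the primes of the form $0:_S (H/N)$ from the definition. This requires tracking annihilators carefully across the secondary decomposition and verifying uniqueness up to the underlying set of primes. Once this framework is in place, parts (i) and (iii) follow directly, while (ii) needs the additional reduction to the complete case so that Matlis duality can be inverted on the Artinian side.
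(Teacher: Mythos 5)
Your proposal is correct and takes essentially the same approach as the paper: the paper simply cites Brodmann--Sharp Chapter 7 (and 10.2.20, 7.2.11), where precisely this Macdonald--Matlis theory of secondary representations of Artinian modules is developed; you are unpacking that reference rather than finding a different route. The one caveat is that your ``preliminary step'' (identifying the primes of a minimal secondary representation with those of the form $0:_S(H/N)$) and the reverse inclusion in (ii) via completion and Matlis duality are outlined but not carried out, so as written the argument is a sound sketch rather than a complete proof.
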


\begin{lem} 
	\label{AttLocalCohomologyDepthCanonicalModule}
 Let $(S,\fm_{S})$ be a   local ring of dimension $\ge 2$ admitting a canonical module $\omega_{S}$. If $$\mathfrak{m}_{S}\in \text{Att}\big(H^{\dim(S)-1}_{\fm_{S}}(S)\big)$$ then $\depth(\omega_{S})=2$.
\end{lem}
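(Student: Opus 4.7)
The plan is to establish the equality $\depth(\omega_S)=2$ by combining an easy lower bound with a harder upper bound. Since $\omega_S$ satisfies Serre's condition $(S_2)$ by \cite[(1.10)]{AoyamaSomeBasic} and $\dim(\omega_S)=\dim(S)\geq 2$, the inequality $\depth(\omega_S)\geq 2$ is automatic; the entire content lies in showing $\depth(\omega_S)\leq 2$, equivalently $H^2_{\fm_S}(\omega_S)\neq 0$.

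First I would reduce to the complete case, since completion preserves the canonical module, the local cohomology of a finitely generated module, the attached primes of an Artinian module, and the depth of $\omega_S$. Then by Cohen's structure theorem I write $S=G/\fg$ with $(G,\fm_G)$ a complete Gorenstein local ring of dimension $n$, set $c=n-d$, and introduce the deficiency modules $K^j(S):=\Ext^{n-j}_G(S,G)$, so $K^d(S)\cong\omega_S$ by display (\ref{EquationCanonicalModule}). Local duality over $G$ yields $H^i_{\fm_S}(M)^\vee\cong\Ext^{n-i}_G(M,G)$ for any finitely generated $S$-module $M$, so by Lemma \ref{LemmaAttachedPrime}(\ref{itm:AttAndAss}) the hypothesis rewrites as $\fm_S\in\Ass_S(K^{d-1}(S))$, which (applying local duality once more, now to the finitely generated $S$-module $K^{d-1}(S)$) is equivalent to $\Ext^n_G(K^{d-1}(S),G)\neq 0$.

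To obtain the upper bound, I would argue by contradiction: suppose $\depth(\omega_S)\geq 3$, so that $H^i_{\fm_S}(\omega_S)=0$ and hence $\Ext^{n-i}_G(\omega_S,G)=0$ for $i=0,1,2$. The key tool is the biduality isomorphism (valid because $G$ is Gorenstein and $S$ has finite projective dimension over $G$) and the associated hyper-Ext spectral sequence
\begin{equation*}
E_2^{p,q}=\Ext^p_G\bigl(K^{n+q}(S),\,G\bigr)\ \Longrightarrow\ H^{p+q}(S),
\end{equation*}
which converges to $S$ in total degree $0$ and to $0$ in every other total degree. I would then examine the spot $E_2^{n,-c-1}=\Ext^n_G(K^{d-1}(S),G)$, which is nonzero by the translated hypothesis and has total degree $d-1\neq 0$, so it must vanish at $E_\infty$. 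Outgoing $d_r$-differentials land in $E_r^{n+r,\ast}$ with $p+r>n$, and hence vanish since $\Ext^{>n}_G(-,G)=0$. Incoming $d_r$-differentials come from $E_r^{n-r,\,-c+r-2}$, a subquotient of $\Ext^{n-r}_G(K^{d+r-2}(S),G)$; for $r\geq 3$ this vanishes because $K^{d+r-2}(S)=0$ (as $d+r-2>d$), and for $r=2$ it equals $\Ext^{n-2}_G(\omega_S,G)$, which is zero by our standing assumption. Thus $E_\infty^{n,-c-1}=E_2^{n,-c-1}\neq 0$, contradicting convergence to $0$ at total degree $d-1$. This forces $\Ext^{n-2}_G(\omega_S,G)\neq 0$, i.e., $H^2_{\fm_S}(\omega_S)\neq 0$, proving $\depth(\omega_S)\leq 2$.

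The main obstacle I would anticipate is the spectral-sequence bookkeeping in the last paragraph---namely verifying that $E_2^{n,-c-1}$ can only be killed by a $d_2$-differential whose source is $\Ext^{n-2}_G(\omega_S,G)$, crucially using the vanishing range $K^j(S)=0$ for $j>d$ and the obvious vanishing $\Ext^{>n}_G(-,G)=0$. Once this is cleanly set up, the proof reduces to a straightforward two-inequality argument.
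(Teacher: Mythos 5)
Your argument is correct, and it is genuinely self-contained where the paper is not: the paper's entire proof of Lemma \ref{AttLocalCohomologyDepthCanonicalModule} is the single sentence citing \cite[Lemma 2.1(2)(i)]{AoyamaGotOnTheEndomorphism}, so you have actually reproved the result rather than merely rephrased the paper's proof. Your route—the reduction to the complete case, the translation via local duality of both hypothesis and conclusion into statements about the deficiency modules $K^j(S)=\Ext^{n-j}_G(S,G)$, and the boundary analysis in the biduality spectral sequence $E_2^{p,q}=\Ext^p_G(K^{n+q}(S),G)\Rightarrow S$—is exactly the machinery underlying the Aoyama--Goto lemma, and the bookkeeping is right: $E_2^{n,-c-1}$ sits in total degree $d-1\neq 0$, all outgoing $d_r$ vanish because $\operatorname{inj.dim}G=n$, all incoming $d_r$ with $r\geq 3$ vanish because $K^{>d}(S)=0$, and the only remaining incoming differential has source $\Ext^{n-2}_G(K^d(S),G)=\Ext^{n-2}_G(\omega_S,G)$, which your contradiction hypothesis kills. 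Two small points to tighten. First, Cohen's structure theorem produces a complete \emph{regular} local ring, not merely a Gorenstein one; you should either say ``regular'' (which you implicitly need if you insist on finite projective dimension of $S$ over $G$) or drop the finite projective dimension clause entirely, since biduality $\mathbf{R}\Hom_G(\mathbf{R}\Hom_G(S,G),G)\simeq S$ and the boundedness of the spectral sequence already follow from $G$ Gorenstein and $S$ finitely generated (as $\operatorname{inj.dim}G<\infty$ forces $K^j(S)=0$ for $j\notin[0,d]$). Second, the lower bound $\depth(\omega_S)\geq 2$ uses $\dim(\omega_S)=\dim(S)$, which is true but nontrivial since $S$ is not assumed equidimensional; it deserves a citation to \cite{AoyamaSomeBasic} alongside the $(S_2)$ property.
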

\begin{proof}
	This follows from \cite[Lemma 2.1(2)(i)]{AoyamaGotOnTheEndomorphism}.
\end{proof}

\begin{rem}
	\emph{
		Under the conditions that $S$ is $(S_2)$ with Cohen-Macaulay formal fibers and $\widehat{S}$ is equidimensional,  the above lemma  and its converse can be proved via some results of \cite{DibaeiAStudy} and \cite{DibaeiJafariCohen-Macaulay}. For the details see Remark 2.7 of the arXiv version of \cite{TavanfarTousiAStudy}. For another proof of this lemma  and its converse    assuming only that  $\depth(S)>0$ and $\dim(S)\ge 3$, see Lemma 2.5 of the arXiv version of \cite{TavanfarTousiAStudy}. 
	}
\end{rem}

The following lemma   will be used in the proof of Theorem \ref{MonomialTheoremCharacteristicFree}.

\begin{lem}\label{LemmaIndeterminateActsSurjectivelyonLocalCohomology} Let $({S},\fm_{S})$ be a local ring  and  $X$ be an indeterminate over ${S}$. Then,  for all $i$  the multiplication map by $X$  on $H^{i}_{(\fm_{S},X)}\big({S}[X]_{(\fm_{S},X)}\big)$ is surjective.
	\begin{proof}
		$H^0_{(\fm_{S},X)}({S}[X]_{(\fm_{S},X)})=0$ in view of the regularity of the element $X$. Fix some $1\le i\le \dim({S})+1$. By \cite[4.3.2 Flat Base Change Theorem]{BrodmannSharpLocal}, $H^{i}_{(\fm_{S})}({S}[X])\cong H^{i}_{\fm_{S}}({S})[X]$, so $\Gamma_{(X)}\big(H^i_{(\fm_{S})}({S}[X])\big)=0$. Consequently, from  \cite[Corollary 1.4]{SchenzelOnTheUse} we get \begin{equation}
		\label{EquationIsomorphismofLocalCohomolgoies}
		H^i_{(\fm_{S},X)}({S}[X])\cong H^1_{(X)}\big(H^{i-1}_{(\fm_{S})}({S}[X])\big).
		\end{equation}  
		But $X$ acts surjectively on $H^{i-1}_{(\fm_{S})}({S}[X])_{X}$, while $H^1_{(X)}\big(H^{i-1}_{(\fm_{S})}({S}[X])\big)$ is a quotient of $H^{i-1}_{(\fm_{S})}({S}[X])_{X}$ by \cite[Corollary 2.2.21]{BrodmannSharpLocal}. From this fact and the display  (\ref{EquationIsomorphismofLocalCohomolgoies}) we conclude that $X$ acts surjectively on $H^i_{(\fm_{S},X)}({S}[X])\cong H^i_{(\fm_{S},X)}({S}[X]_{(\fm_{S},X)})$ (since  $(\fm_{S},X)$ is a maximal ideal, so the localization map at $(\fm_{S},X)$ is an isomorphism on each $(\fm_{S},X)$-torsion module).
	\end{proof}
\end{lem}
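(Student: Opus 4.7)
The plan is to reduce the question to a computation in which $X$ is made invertible, since on any module on which $X$ acts invertibly (in particular on any localization $M_X$), multiplication by $X$ is tautologically surjective, and surjectivity then descends to any quotient.

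First, I would drop the localization at $(\fm_{S},X)$: because $(\fm_{S},X)$ is the maximal ideal of $S[X]_{(\fm_{S},X)}$, local cohomology is unchanged by localization at its supporting maximal ideal, so it suffices to prove that multiplication by $X$ is surjective on $H^{i}_{(\fm_{S},X)}(S[X])$. The case $i=0$ is immediate: $X$ is a nonzerodivisor on $S[X]$, hence $H^{0}_{(\fm_{S},X)}(S[X])=0$.

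For $i\ge 1$, the key observation is that $S[X]$ is free over $S$, so by the Flat Base Change Theorem \cite[4.3.2]{BrodmannSharpLocal} one obtains $H^{j}_{\fm_{S}}(S[X])\cong H^{j}_{\fm_{S}}(S)\otimes_{S}S[X]\cong H^{j}_{\fm_{S}}(S)[X]$. On such a polynomial-extended module, $X$ is visibly a nonzerodivisor, whence $\Gamma_{(X)}\bigl(H^{j}_{\fm_{S}}(S[X])\bigr)=0$ for every $j$. Under this vanishing hypothesis, the isomorphism of Schenzel \cite[Corollary 1.4]{SchenzelOnTheUse} (which can be viewed as the collapse of the Grothendieck spectral sequence $H^{p}_{(X)}(H^{q}_{\fm_{S}}(S[X]))\Rightarrow H^{p+q}_{(\fm_{S},X)}(S[X])$, since $(X)$ is principal and has cohomological dimension at most $1$) gives
\begin{equation*}
H^{i}_{(\fm_{S},X)}(S[X])\cong H^{1}_{(X)}\bigl(H^{i-1}_{\fm_{S}}(S[X])\bigr).
\end{equation*}

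To finish, I would invoke the Čech description of $H^{1}_{(X)}$: for any module $M$, $H^{1}_{(X)}(M)$ is a quotient of $M_{X}$ (as the cokernel of $M\to M_{X}$), and $X$ acts invertibly, a fortiori surjectively, on $M_{X}$; therefore $X$ acts surjectively on the quotient $H^{1}_{(X)}(M)$. Applied with $M=H^{i-1}_{\fm_{S}}(S[X])$, this yields the desired surjectivity. The main (essentially the only) obstacle is verifying that Schenzel's vanishing hypothesis applies, and this is settled in one line by the flat base change computation above.
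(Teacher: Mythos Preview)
Your proof is correct and follows essentially the same approach as the paper: both drop the localization using that $(\fm_S,X)$-torsion modules are unchanged by localizing at $(\fm_S,X)$, both invoke flat base change to see that $X$ is a nonzerodivisor on $H^{j}_{\fm_S}(S[X])$, both apply \cite[Corollary 1.4]{SchenzelOnTheUse} to obtain $H^{i}_{(\fm_S,X)}(S[X])\cong H^{1}_{(X)}\bigl(H^{i-1}_{\fm_S}(S[X])\bigr)$, and both conclude via the \v{C}ech description of $H^{1}_{(X)}$ as a quotient of the localization at $X$. The only cosmetic difference is that you handle the localization reduction at the outset rather than at the end.
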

\subsection{Preliminaries on   the CE Property}
  In this subsection, we need to use the concept of  projective resolution of bounded below complexes. In the sequel,   $\simeq$ denotes the quasi-isomorphism of complexes.

\begin{defis}\label{DefinitionHyperhomology}
	\emph{Let $S$ be a  (Noetherian commutative) ring. A complex $C_\bullet$ of $S$-modules  is called a \textit{bounded below complex} if $C_i=0$ for $i\ll 0$. It is called a \textit{homologically bounded below complex} if $H_i(C)=0$ for all $i\ll 0$.  It is  called a degreewise finite complex provided $C_i$ is a finitely generated $S$-module for each $i$.   A \textit{projective resolution} of a homologically bounded below complex $C_\bullet$ is a bounded below complex $P_\bullet$ consisting of projective $S$-modules together with a quasi-isomorphism $\pi_\bullet:P_\bullet\overset{\simeq}{\rightarrow} C_\bullet$ (see \cite[Definitions (A.3.1)(P)]{ChristensenGorenstein}).}
\end{defis}

\begin{rem}\emph{\label{RemarkNiceProjectiveResolutionOfComplexes} (see \cite[Remarks 1.7]{AvramovFoxbyHomological})  Let $S$ be a (Noetherian commutative) ring and $C_\bullet$ be a homologically bounded below complex of $S$-modules with finitely generated homologies. Then $C_\bullet$ has a projective resolution $\pi_\bullet:P_\bullet\overset{\simeq}{\rightarrow} C_\bullet$ such that each $P_i$ is a finitely generated free module and such that $P_i=0$ for each $i<\inf\{j\in \mathbb{Z}:H_j(C_\bullet)\neq 0\}$  
		(\cite[Remarks 1.7]{AvramovFoxbyHomological}).
	}
\end{rem}

The following lemma will be used in the proof of Lemma \ref{LemmaHyperHomology}. This lemma is pointed out to the author by the referee in order to obviate the need for spectral sequences in the (previous) author's  proof.

\begin{lem}\label{LemmaPreliminaryLemma}
	If $L_\bullet,C_\bullet$  are both bounded below complexes of $S$-modules (or both bounded above)   and the complex $L_\bullet\otimes_SC_m$  is exact at $L_{i-m}\otimes_S C_m$ for all $m$, then $H_i(L_\bullet\otimes_SC_\bullet)=0$.
\end{lem}
\begin{proof}
 We only prove the bounded below case. By a naive shifting if necessary, to simplify the notation and without loss of generality, we   may and we do assume that    $C_j=0,L_j=0$ provided $j<0$.  So at the $i$-th spot the complex $L_\bullet\otimes_SC_\bullet$ is $$ \bigoplus_{j=0}^{i+1} L_{i-j+1}\otimes_S C_{j} \overset{\partial^{L_\bullet\otimes C_\bullet}_{i+1}}{\longrightarrow}  \bigoplus_{j=0}^i L_{i-j}\otimes_S C_{j} \overset{\partial^{L_\bullet\otimes C_\bullet}_{i}}{\longrightarrow}  \bigoplus_{j=0}^{i-1} L_{i-j-1}\otimes_S C_{j}$$ with well-known differentials. Let $h\in H_i(L_\bullet\otimes_SC_\bullet)$. Assume, inductively, that there is  a representative  
  $(\alpha_j)_{j=0}^i\in \ker \partial^{L_\bullet\otimes C_\bullet}_{i}$, where $\alpha_j\in L_{i-j}\otimes_SC_{j}$, such that $h=[(\alpha_j)_{j=0}^i]$ and $\alpha_j=0$ for each $j> l$.   We claim that then there is a representative of  $h$ as  
  \begin{equation}
  	\label{EquationHomologyClassVanish}
    h=[(\alpha'_j)_{j=0}^i]\text{\ \ s.t.\ \ }\alpha'_j\in L_{i-j}\otimes_SC_{j}, \ \ \alpha'_l=\alpha'_{l+1}=\cdots=\alpha'_i=0.
  \end{equation}
Then the statement will follow from our claim and the induction.

To prove the claim, we notice that from $0=\alpha_{l+1}\in L_{i-l-1}\otimes_S C_{l+1}$ and $(\alpha_j)_{j=0}^i\in \ker\ \partial^{L_\bullet\otimes C_\bullet}_{i}$ we get $$\partial^{L_\bullet\otimes C_{l}}_{i-l}(\alpha_l)=\pm\partial^{L_{i-l-1}\otimes C_\bullet}_{l+1}(\alpha_{l+1})=0.$$ From this and the exactness of $L_\bullet\otimes_S C_{l}$ at $L_{i-l}\otimes C_{l}$ (by our assumption), we get some $\gamma\in L_{i-l+1}\otimes C_{l}$ such that $\partial^{L_\bullet\otimes C_{l}}_{i-l+1}(\gamma)=\alpha_l$. Therefore 
\begin{align*}
	h=[(\alpha_j)_{j=0}^i]=[(\alpha_j)_{j=0}^i]+\partial^{L_\bullet\otimes C_\bullet}_{i+1}(-\gamma)&=[(\alpha_j)_{j=0}^i]-[\big(0,\ldots,0,\pm\partial^{L_{i-l+1}\otimes C_\bullet}_{l}(\gamma),\alpha_l,0,\ldots,0\big)]&\\&=[\big(\alpha_0,\ldots,\alpha_{l-2},\alpha_{l-1}\mp\partial^{L_{i-l+1}\otimes C_\bullet}_{l}(\gamma),0,\ldots,0,\big)]
\end{align*}
can be represented as claimed in display (\ref{EquationHomologyClassVanish}), as was to be proved.
\end{proof}

The next lemma is a new result which will be used in the proof of  Lemma \ref{DeformationOFCEC}(\ref{itm:CETandSpecialization}). Moreover, the following lemma can be used to provide an alternative proof for \cite[Corollary 2.8(i)]{DivaaniAzarMohammadiTavanfarTousi} avoiding the use of   big Cohen-Macaulay modules; even more,   using the next lemma one can deduce that  a non-exact complex $G_\bullet:=0\rightarrow G_s\rightarrow \cdots\rightarrow  G_0\rightarrow 0$ of $(S,\fm_S)$-modules as in \cite[Corollary 2.8(i)]{DivaaniAzarMohammadiTavanfarTousi}  satisfies the analogue of the Improved New Intersection Theorem (which is stronger than the New Intersection Theorem).

\begin{lem}\label{LemmaHyperHomology}
	Let $S$ be a (Noetherian commutative) ring and $$C_\bullet:=0\rightarrow C_s\rightarrow C_{s-1}\rightarrow \cdots \rightarrow C_1\rightarrow C_0\rightarrow 0$$ be a complex of $S$-modules with finitely generated homologies such that $H_0(C_\bullet)\neq 0$. Suppose that $\pd_S(C_i)=\ell$   for each $0\le i\le s$,  where $\ell\ge 1$. Then  there is a projective resolution $\pi_\bullet:P_\bullet\overset{\simeq}{\rightarrow} C_\bullet$ of $C_\bullet$ where
	$$P_\bullet=0\rightarrow P_{t}\rightarrow P_{t-1}\rightarrow \cdots \rightarrow P_1\rightarrow P_0\rightarrow 0$$ with  $t\le s+\ell$  (i.e. $P_\bullet$ has length $\le s+\ell$) and such that $P_\bullet$ consists of finitely generated projective modules.
	\begin{proof}

	 By Remark \ref{RemarkNiceProjectiveResolutionOfComplexes}, there is a projective resolution  $\pi'_\bullet:P'_\bullet\overset{\simeq}{\rightarrow}C_\bullet$ of $C_\bullet$ consisting of finitely generated projective $S$-modules  such that $P'_i=0$ for each $i\lneq 0$.
	 	
	 	 If $P'_\bullet$   already has  length $\le s+\ell$, then  we are done by setting $P_\bullet:=P'_\bullet$ and $\pi_\bullet:=\pi'_\bullet$. So we assume that $P'_\bullet$ is either an infinite complex or it has length $>  s+\ell$.	 	
	 	As $P'_\bullet\overset{}{\simeq} C_\bullet$ and $C_\bullet$ has length $s$ so
	 	\begin{equation}	 	
	 	\label{EquationHighHomolgoiesOfP'Vanish}
	 	\forall\ i\ge s+1,\ \ H_i(P'_\bullet)=0.
	 	\end{equation}
		Let $M$ be  an arbitrary $S$-module. We aim to show that  
		\begin{equation}
		\label{TorGandMForAnyMVanishesAfterlpluss}
		\forall\ i\ge s+\ell+1,\ \ \ H_i(M\otimes_S P'_\bullet)=0.
		\end{equation}		
		Let  $L^{M}_\bullet$ be an  $S$-free resolution of $M$. Then by \cite[(A.4.1) Preservation of Quasi-Isomorphisms and Equivalences]{ChristensenGorenstein},
		$$M\otimes_SP'_\bullet \overset{(- \otimes_S P'_\bullet)\text{\ preserves quasi-iso.}}{\simeq} L^{M}_\bullet\otimes_S P'_\bullet   \overset{(L^M_\bullet \otimes_S -)\text{\ preserves quasi-iso.}}{\simeq} L^{M}_{\bullet} \otimes_S C_\bullet.$$
		Hence display (\ref{TorGandMForAnyMVanishesAfterlpluss}) follows if we can show that  
		\begin{equation}
			 \label{EquationVeryNewVersionOfTHis}
			\forall\  i\ge s+\ell +1,\ \ \  H_i(L^{M}_{\bullet} \otimes_S C_\bullet)=0.
		\end{equation}

		If $q,p\in \mathbb{N}_0$ are such that $i=q+p\ge s+\ell +1$, then either $p\ge s+1$ or $q\ge \ell +1$,  so
        
		\begin{align}
		\label{EquationE1Page}
		H_{q}(L^M_{\bullet}\otimes_SC_p)\cong \tor^S_{q}(M,C_p)\overset{\text{}}{=}0,&& (\text{either\ }C_p=0\   (\text{if\ }p\ge s+1)\ \text{\ or\ }\pd_S(C_p)=\ell< q).  
		\end{align}		        
        Now  if we apply Lemma \ref{LemmaPreliminaryLemma} to the    vanishings in display (\ref{EquationE1Page}) for each $i=p+q\ge s+\ell+1$, then the desired vanishings in display (\ref{EquationVeryNewVersionOfTHis}) and thence in (\ref{TorGandMForAnyMVanishesAfterlpluss}) immediately follow.

	As $H_i(P'_\bullet)=0$ for each $i\ge s+1$ by display (\ref{EquationHighHomolgoiesOfP'Vanish}), so in  the hard truncation  
		$$P'_{\ge s+\ell}:=\cdots\rightarrow P'_{s+\ell+3}\rightarrow P'_{s+\ell+2}\rightarrow P'_{s+\ell+1}\rightarrow P'_{s+\ell}\rightarrow  0,$$ 
		of $P'_\bullet$,  all homologies,  but   $H_{s+\ell}(P'_{\ge s+\ell})=P'_{s+\ell}/\text{im}(\partial^{P'_\bullet}_{s+\ell+1})$, are zero. This together with display (\ref{TorGandMForAnyMVanishesAfterlpluss}) implies that $$\tor^S_1\big(M,P'_{s+\ell}/\text{im}(\partial^{P'_\bullet}_{s+\ell+1})\big)=H_{s+\ell+1}(M\otimes_SP'_\bullet)=0$$ for any $S$-module $M$. Consequently, $P'_{s+\ell}/\text{im}(\partial^{P'_\bullet}_{s+\ell+1})$ is a projective module as  finitely generated flat modules are projective.  Let $P_\bullet$  be  the soft truncation 
		$$ 0\rightarrow P'_{s+\ell}/\text{im}(\partial^{P'_\bullet}_{s+\ell+1})\overset{\overline{\partial^{P'_\bullet}_{s+\ell}}}{\rightarrow} P'_{s+\ell-1} \overset{\partial^{P'_\bullet}_{s+\ell-1}}{\rightarrow} P'_{s+\ell-2}\rightarrow \cdots \rightarrow P'_{0} \rightarrow 0 $$ of $P'_\bullet$, which is  a complex of finitely generated projective modules of length $\le s+\ell$. To define $\pi_\bullet:P_\bullet\rightarrow C_\bullet$, set $\pi_i=\pi'_i$ for each $0\le i< s+\ell$ and set $\pi_{s+\ell}:P'_{s+\ell}/\text{im}(\partial^{P'_\bullet}_{s+\ell+1})\rightarrow C_{s+\ell}(=0)$ to be the zero homomorphism.
		 Then  $\pi_\bullet:P_\bullet\rightarrow C_\bullet$ is a chain map and it is a quasi-isomorphism.
	\end{proof}
\end{lem}

\begin{rem} \emph{We bring to the reader's attention that in the statement of Lemma \ref{LemmaHyperHomology} the complex $C_\bullet$ is not assumed to be a degreewise finite complex and only the finiteness of its homologies is required.}
\end{rem}

The first part of  the following  lemma  will be used in   the last step of  the proof of  Theorem \ref{MonomialTheoremCharacteristicFree}. 
Namely, therein we arrive at a possibly non-local ring $Q$ with a maximal ideal $\fm$ such that $\big(Q/(\mathbf{x}),(\fm)\big)$ is a local ring for some regular sequence $\mathbf{x}\subseteq \fm$. Then Lemma \ref{DeformationOFCEC}(i) deduces the  CE Property for   $Q/(\mathbf{x})$  from the validity of  a similar  CE  Property for  $Q$.  
  Lemma \ref{DeformationOFCEC}(ii)   shows that the CE Property is preserved under deformation, but under a further condition that is (fortunately) available in the situation of the proof of Theorem \ref{MonomialTheoremCharacteristicFree}.

\begin{lem} 	\label{DeformationOFCEC} The following statements hold.
	\begin{enumerate}[(i)]
	\item\label{itm:CETandSpecialization}Let $Q$ be a Noetherian ring and $\fm$ be a maximal ideal  of height $d$. Let $\mathbf{x}\subseteq \fm$ be a regular sequence of $Q$   such that $\big(Q/\mathbf{x}Q,(\fm)\big)$ is a local ring. Let $F_\bullet$ be a  free resolution of $Q/\fm$. Presume that $Q$ satisfies the following CE Property:
	
	For some sequence $\mathbf{z}:=z_1,\ldots,z_d$ ($d=\text{height}(\fm)$) with $\sqrt{(\mathbf{z})}=\fm$ and for some chain map $\phi_\bullet:K_\bullet(\mathbf{z};Q)\rightarrow F_\bullet$ lifting the natural epimorphism $Q/(\mathbf{z})\twoheadrightarrow Q/\fm$, it is the case that $$\forall\  v\in \mathbb{N},\ z_1^{v-1}\ldots z_d^{v-1}\phi_d(1)+\text{im}(\partial^{F_\bullet}_{d+1})\notin (z_1^v,\ldots,z_d^v)\text{syz}_d^{F_\bullet}(Q/\fm).$$
	Then the local ring $Q/\mathbf{x}Q$ satisfies the CE Property.
	\item\label{itm:CETandDeformation} Let $(S,\fm_S)$ be a local ring that is a homomorphic image of a Gorenstein local ring. Suppose that   $x$ is a regular element of $S$ such that $S/xS$ satisfies the CE Property.
	If $$x\notin \bigcup\limits_{\fp\in \text{Att}\big(H^{\dim(S)-1}_{\fm_{S}}(S)\big)}\fp,$$ then  $S$ also satisfies the CE Property.
	\end{enumerate}
	\begin{proof}
		(i)  By virtue of \cite{DutaOnTheCanonical}, it suffices to show that $Q/\mathbf{x}Q$ satisfies the Improved New Intersection Conjecture. So let $$G_\bullet:=0\rightarrow G_s\rightarrow G_{s-1}\rightarrow \cdots\rightarrow G_0\rightarrow 0$$ be a  complex of finite free $(Q/\mathbf{x}Q)$-modules such that $\ell\big(H_i(G_\bullet)\big)<\infty$ for $i>0$ and $H_0(G_\bullet)$ admits a minimal generator $h$ of finite length. 
		We should show that $\dim(Q/\mathbf{x}Q)\le s$ (we assume that $G_s\neq 0$). 
		
		Let $\ell$ be the length of the regular sequence $\mathbf{x}$. So $\ell=\pd_{Q}(G_j)$ for any $0\le j\le s$, as $G_j=(Q/\mathbf{x}Q)^{n_j}$ for some $n_j\in \mathbb{N}_0$. Thus applying Lemma \ref{LemmaHyperHomology}, there exists a complex $$P_\bullet:=0\rightarrow P_{t}\rightarrow P_{t-1}\rightarrow \cdots\rightarrow P_1\rightarrow P_0\rightarrow 0$$    of finitely generated projective $Q$-modules with $t\le \ell+s$  such that it is quasi-isomorphic to $G_\bullet$.   
		Consequently $\fm^uH_i(P_\bullet)=0$ for all $i>0$, and $\fm^uh=0$ for some $u\in \mathbb{N}$. Therefore, considering the sequence $\mathbf{z}$ as in the statement of the lemma, we have $(\mathbf{z})^uH_i(P_\bullet)=(\mathbf{z})^uh=0$ for some $u\in \mathbb{N}$. Thus, arguing  as in the proof of \cite[Theorem (2.6)]{HochsterCanonical} we can construct  a chain map of complexes $$g_\bullet:K_\bullet(\mathbf{z}^v;Q)\rightarrow P_\bullet$$ for some  $v\ge u$  such that $\overline{g_0}:Q/\mathbf{z}^vQ\rightarrow H_0(P_\bullet)$ is given by $1+(\mathbf{z}^v)\mapsto h$.

		Since $h\in H_0(P_\bullet)\cong H_0(G_\bullet)$  is a minimal generator so there is an epimorphism $$\pi:H_0(P_\bullet)\twoheadrightarrow H_0(P_\bullet)/\fm H_0(P_\bullet)\overset{\text{projection}}{\twoheadrightarrow}  Q/\fm,\ \ \pi(h)=1+\fm.$$
		
		As $P_\bullet$ is a complex of projective $Q$-modules and $F_\bullet\rightarrow Q/\fm\rightarrow 0$  is an exact complex ($F_\bullet$ is the free resolution mentioned in the statement) so there is a chain map of complexes  $$ g'_\bullet:P_\bullet \rightarrow F_\bullet, \ \ \overline{g'_0}=\pi$$ where $\overline{g'_0}:H_0(P_\bullet)\rightarrow H_0(F_\bullet)=Q/\fm$  is the induced map by $g'_0$. Consequently, $$\psi_\bullet:=g'_\bullet\circ g_\bullet:K_\bullet(\mathbf{z}^v;Q)\rightarrow F_\bullet$$ is a chain map  lifting the natural surjection $Q/\mathbf{z}^vQ\rightarrow Q/\fm$. Also, we have another chain map $$_v\phi_\bullet:=\phi_\bullet\circ  {_v\lambda}_\bullet:K_\bullet (\mathbf{z}^v;Q)\rightarrow F_\bullet$$ that is lifting the natural surjection $Q/\mathbf{z}^vQ\rightarrow Q/\mathfrak{m}$,   where $\phi_\bullet:K_\bullet(\mathbf{z};Q)\rightarrow F_\bullet$ is the chain map mentioned in the statement, and  $_v\lambda_\bullet:K_\bullet(\mathbf{z}^v;Q)\rightarrow K_\bullet(\mathbf{z};Q)$ is as in  Lemma \ref{LemmaInducedMapOnKoszulHomolgoies}
		. Note that $$_v\phi_d(1)=\phi_d\big({_v\lambda_d}(1)\big)\overset{\text{Lemma \ref{LemmaInducedMapOnKoszulHomolgoies}(\ref{itm:PowerOfX})}}{=} \phi_d(z_1^{v-1}\cdots z_d^{v-1})=z_1^{v-1}\cdots z_d^{v-1}\phi_d(1).$$  Consequently, from Lemma \ref{LemmaBasicFactInHomologicalAlgebra}(\ref{itm:RelationInSyz}) we get
		
		$\psi_d(1)-z_1^{v-1}\cdots z_d^{v-1}{\phi_d}(1)+\text{im}(\partial^{F_\bullet}_{d+1})=\psi_d(1)-{_v\phi_d}(1)+\text{im}(\partial^{F_\bullet}_{d+1})\in (z^v_1,\ldots,z^v_d)\text{syz}_d^{F_\bullet}(Q/\fm)$.
		
		This, in conjunction with the  CE  Property assumed in the statement implies that $\psi_d=g'_d\circ g_d\neq 0$, whence $g_d\neq 0$ in particular. Consequently, $t \ge d$, otherwise the target of $g_d:K_{d}(\mathbf{z}^v;Q)\rightarrow P_d$ is the zero module (we recall that  the complex $P_\bullet$ has length    $t$, where $t\le s+\ell$). So $\ell+s\ge d$. Therefore,
		$$\dim({Q}/\mathbf{x}{Q})=\dim ({Q}_{\mathfrak{m}}/\mathbf{x}{Q}_{\mathfrak{m}})=\dim ({Q}_{\mathfrak{m}})-\ell = \text{height}(\fm)-\ell= d-\ell\le s,$$ as was to be proved.
		
		(ii) Suppose that ${S}$ is a homomorphic image of a  Gorenstein local ring $G$, say $S=G/\mathfrak{g}$ where $\mathfrak{g}$ is an ideal of $G$ of height $l$. From our hypothesis in conjunction with \cite[Theorem (3.15)]{HochsterCanonical}  we get $\eta_{{S}/x{S}}\neq 0$.  We are done if we can show that $\eta_{{S}}\neq 0$ (again by  \cite[Theorem (3.15)]{HochsterCanonical}). Since $l=\dim(G)-\dim(S)$ in view of \cite[Corollary 2.1.4]{BrunsHerzogCohenMacaulay}, so by virtue of 	 \cite[Theorem (5.3)]{HochsterCanonical} the non-vanishing of $\eta_{S}$ follows from the vanishing  of the module $$C=0:_{\Ext_G^{l+1}({S},G)}x$$ in the statement of \cite[Theorem (5.3)]{HochsterCanonical}, 
		where $C$  is introduced in \cite[Lemma (5.1) and its preceding paragraph]{HochsterCanonical}. 
		
		By  \cite[11.2.6 Local Duality Theorem]{BrodmannSharpLocal}), $\text{Hom}_{S}\big(\Ext_G^{l+1}({S},G),\text{E}({S}/\fm_{S})\big)\cong H^{\dim(S)-1}_{\fm_{S}}({S})$ which implies that 
		\begin{align*}
		\text{Hom}_{S}\big(C,\text{E}({S}/\fm_{S})\big)&\cong \text{Hom}_{S}\Big(\text{Hom}_{S}\big({S}/x{S},\Ext_G^{l+1}({S},G)\big),\text{E}({S}/\fm_{S})\Big)
		&\\&\cong 
		\text{Hom}_{{S}}\big(\Ext_G^{l+1}({S},G),\text{E}({S}/\fm_{S})\big)\otimes_{S}({S}/x{S}) && (\text{\cite[Lemma 9.71]{Rotman}})		
		&\\&\cong 
		H^{\dim(S)-1}_{\fm_{S}}({S})/xH^{\dim(S)-1}_{\fm_{S}}({S})
		&\\& 
		=0 && \text{(our hypo. and Lemma \ref{LemmaAttachedPrime}(\ref{itm:AttAndSurjectivity}))}.
		\end{align*}
		 Consequently $C=0$, as required.
	\end{proof}
\end{lem}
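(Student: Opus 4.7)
Both parts will exploit known reformulations of the CE Property. For part (i), I would use the equivalence of the CE Property with the Improved New Intersection Theorem (INIT) recalled in the introduction, together with the resolution-shortening Lemma \ref{LemmaHyperHomology} to lift a test complex from $Q/\mathbf{x}Q$ back to $Q$. For part (ii), the natural route is through the Ext-reformulation of the CET (Hochster's Theorem (3.15) and Theorem (5.3) of \cite{HochsterCanonical}), combined with local duality.

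For (i), the plan is as follows. Let $G_\bullet=0\to G_s\to\cdots\to G_0\to 0$ be a complex of finite free $Q/\mathbf{x}Q$-modules as in the statement of INIT, with $H_i(G_\bullet)$ of finite length for $i\ge 1$ and a finite-length minimal generator $h$ of $H_0(G_\bullet)$. Since each $G_j$ is free over $Q/\mathbf{x}Q$ and $\mathbf{x}$ is $Q$-regular of length $\ell$, we have $\mathrm{pd}_Q(G_j)=\ell$, so Lemma \ref{LemmaHyperHomology} gives a quasi-isomorphic projective resolution $P_\bullet$ over $Q$ of length $t\le s+\ell$. Since the higher homologies of $P_\bullet$ and the element $h\in H_0(P_\bullet)$ are killed by a power of $\fm$ and hence by a power of $(\mathbf{z})$, I can construct a chain map $g_\bullet\colon K_\bullet(\mathbf{z}^v;Q)\to P_\bullet$ (for large $v$) lifting $1\mapsto h$, then compose with a chain map $g'_\bullet\colon P_\bullet\to F_\bullet$ lifting a projection $H_0(P_\bullet)\twoheadrightarrow Q/\fm$ sending $h$ to $1$. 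The composition $\psi_\bullet=g'_\bullet\circ g_\bullet$ and the induced lift $_v\phi_\bullet=\phi_\bullet\circ{_v\lambda_\bullet}$ (using Lemma \ref{LemmaInducedMapOnKoszulHomolgoies}(ii) to compute $_v\phi_d(1)=z_1^{v-1}\cdots z_d^{v-1}\phi_d(1)$) both lift $Q/(\mathbf{z}^v)\to Q/\fm$, so by the chain-homotopy Lemma \ref{LemmaBasicFactInHomologicalAlgebra}(ii) they agree on $\mathrm{syz}^{F_\bullet}_d(Q/\fm)$ modulo $(\mathbf{z}^v)$. The hypothesis on $Q$ then forces $\psi_d\ne 0$, so $g_d\ne 0$, forcing $t\ge d$, i.e., $s\ge d-\ell=\dim(Q/\mathbf{x}Q)$, as required.

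For (ii), I would write $S=G/\mathfrak{g}$ with $G$ Gorenstein local and $l=\mathrm{height}(\mathfrak{g})=\dim G-\dim S$. By Hochster's Theorem (3.15), I must show $\eta_S\ne 0$; by Theorem (5.3) of \cite{HochsterCanonical}, it suffices to show the module $C:=0:_{\mathrm{Ext}^{l+1}_G(S,G)}x$ vanishes. Matlis-dualizing and using local duality $\mathrm{Hom}_S(\mathrm{Ext}^{l+1}_G(S,G),E(S/\fm_S))\cong H^{\dim(S)-1}_{\fm_S}(S)$, together with the Hom-tensor adjunction, identifies $\mathrm{Hom}_S(C,E(S/\fm_S))$ with $H^{\dim(S)-1}_{\fm_S}(S)/xH^{\dim(S)-1}_{\fm_S}(S)$. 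The attached-prime hypothesis and Lemma \ref{LemmaAttachedPrime}(iii) force multiplication by $x$ to be surjective on this Artinian module, so the quotient vanishes, giving $C=0$.

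The main obstacle is bookkeeping in part (i): carefully arranging that a single chain map from a Koszul complex on $\mathbf{z}^v$ simultaneously witnesses the minimal generator $h$ through $g_\bullet$ and detects the hypothesized non-vanishing at the top through composition with $\phi_\bullet$, with the intermediate $\mathbf{z}^{v-1}$-factor appearing precisely as the CE-type hypothesis demands. Part (ii) is comparatively mechanical once the Matlis-dual reformulation is in place.
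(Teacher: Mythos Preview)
Your proposal is correct and follows essentially the same route as the paper: for (i) you reduce to INIT via \cite{DutaOnTheCanonical}, lift the test complex to $Q$ using Lemma~\ref{LemmaHyperHomology}, build the same two chain maps $g'_\bullet\circ g_\bullet$ and $\phi_\bullet\circ{_v\lambda_\bullet}$, and compare them via Lemma~\ref{LemmaBasicFactInHomologicalAlgebra}(\ref{itm:RelationInSyz}); for (ii) you invoke Hochster's Theorems~(3.15) and~(5.3), then Matlis-dualize $C$ using local duality and Lemma~\ref{LemmaAttachedPrime}(\ref{itm:AttAndSurjectivity}), exactly as the paper does.
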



\begin{lem}
	\label{RingHomomorphismANDCEC}
	Let $\sigma:({S},\fm_{S})\rightarrow (T,\fm_T)$ be a local homomorphism of local rings such that the image  of a system of parameters  for $S$ is   a system of parameters for $T$. If $T$ satisfies the CE Property, then so does ${S}$.
\end{lem}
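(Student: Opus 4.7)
The plan is to transport the given lifting up to $T$ via base change along $\sigma$ and then combine it with a chain map into a free resolution of $T/\fm_T$, in order to invoke the CE Property for $T$. First, since $\sigma(\mathbf{x})$ is a system of parameters for $T$, we have $n := \dim(S) = \dim(T)$, and the locality of $\sigma$ yields $\sigma(\fm_S) T \subseteq \fm_T$.

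Fix a free resolution $F_\bullet^S$ of $S/\fm_S$ and an arbitrary chain-map lifting $\phi_\bullet : K_\bullet(\mathbf{x}; S) \rightarrow F_\bullet^S$ of the natural surjection $S/(\mathbf{x}) \rightarrow S/\fm_S$; the aim is to prove that $\phi_n \neq 0$. Applying $-\otimes_S T$ via $\sigma$ produces a chain map
\[
\phi_\bullet \otimes_S T : K_\bullet(\sigma(\mathbf{x}); T) \longrightarrow F_\bullet^S \otimes_S T
\]
of bounded-below complexes of finite free $T$-modules (under the canonical identification $K_\bullet(\mathbf{x}; S) \otimes_S T \cong K_\bullet(\sigma(\mathbf{x}); T)$), whose induced map on $H_0$ is the natural surjection $T/(\sigma(\mathbf{x})) T \twoheadrightarrow T/\fm_S T$.

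Next, let $F_\bullet^T$ be a free resolution of $T/\fm_T$. Since $F_\bullet^S \otimes_S T$ is a bounded-below complex of projective $T$-modules and $F_\bullet^T$ is acyclic in positive degrees, the standard comparison theorem for projective resolutions yields a chain map $\alpha_\bullet : F_\bullet^S \otimes_S T \rightarrow F_\bullet^T$ lifting the canonical surjection $T/\fm_S T \twoheadrightarrow T/\fm_T$ (well-defined because $\fm_S T \subseteq \fm_T$). Composing gives
\[
\psi_\bullet := \alpha_\bullet \circ (\phi_\bullet \otimes_S T) : K_\bullet(\sigma(\mathbf{x}); T) \longrightarrow F_\bullet^T,
\]
which lifts the natural epimorphism $T/(\sigma(\mathbf{x})) T \twoheadrightarrow T/\fm_T$. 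By the CE Property for $T$ applied to the system of parameters $\sigma(\mathbf{x})$ and the lifting $\psi_\bullet$, one concludes $\psi_n \neq 0$. Since $\psi_n = \alpha_n \circ (\phi_n \otimes_S T)$, this forces $\phi_n \otimes_S T \neq 0$ and a fortiori $\phi_n \neq 0$, as desired.

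There is no genuine obstacle in this argument: it is formal modulo the existence of $\alpha_\bullet$, which is nothing but the standard comparison theorem. It is worth emphasizing that this comparison only uses that the target $F_\bullet^T$ is acyclic in positive degrees, so one does not need $F_\bullet^S \otimes_S T$ itself to be a resolution of $T/\fm_S T$; the higher homologies $\mathrm{Tor}_i^S(S/\fm_S, T)$ of $F_\bullet^S \otimes_S T$ are irrelevant for this direction of lifting.
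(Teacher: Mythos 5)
Your proof is correct and self-contained. The paper's own proof is a bare citation to \cite[Proposition (3.18)(a) and Theorem (3.15)]{HochsterCanonical}; the invocation of Theorem (3.15) (the CET $\Leftrightarrow$ CE Property equivalence) indicates Hochster's argument is phrased in terms of the canonical element $\eta_R$ in local cohomology, with the equivalence then used to translate into the Koszul-lift form. Your argument works entirely within the Koszul-lift formulation: base-change the lift $\phi_\bullet$ along $\sigma$, splice with a comparison chain map into a $T$-free resolution of $T/\fm_T$, and invoke the CE Property for $T$. This is a self-contained, slightly more elementary route that avoids the detour through $\eta_R$.

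Two minor imprecisions, neither a genuine gap. First, the CE Property as stated in the paper uses minimal free resolutions, while you allow arbitrary ones; this is harmless, since for $T$ one may compose a lift into an arbitrary resolution with a comparison chain map onto the minimal one, and for $S$ the minimal case is subsumed by the statement you prove. Second, the CE Property for $S$ quantifies over all systems of parameters, while you treat only the given $\mathbf{x}$; this too is harmless, because $\sigma(\mathbf{x})$ being a system of parameters for $T$ forces $\fm_S T$ to be $\fm_T$-primary, so the image under $\sigma$ of every system of parameters of $S$ is again a system of parameters for $T$, and your argument runs verbatim for any of them.
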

\begin{proof}
	See	\cite[Proposition (3.18)(a)]{HochsterCanonical} and \cite[Theorem (3.15)]{HochsterCanonical}.
\end{proof}


\section{Extended Rees algebra of prime almost complete intersection ideals}

This section  is  devoted to proving   Proposition \ref{StructureProposition}  on some nice properties of the extended Rees algebra $A[\fa t,t^{-1}]$, where $\fa$ is a prime almost complete intersection ideal of a regular local ring $(A,\fm_A)$. 
The following lemmas  are required for  	 Proposition \ref{StructureProposition}. 

\begin{lem}
	\label{GoodRegularSequenceCorollary}
	Let $(A,\mathfrak{m}_A)$ be a regular local ring and  $\fb:=\fc:h,\ \fa:=(\fc,h)$ be linked ideals over the complete intersection ideal $\fc:=(a_1,\ldots,a_g)$ such that $\fb$ is a quasi-Gorenstein prime ideal, and $\fa$ is a prime almost complete intersection ideal.  Assume that $A/\fb$ is  not Cohen-Macaulay. Then
	\begin{enumerate}[(i)]
	\item\label{itm:DepthIdentity} $\depth(A/\fa)=\depth(A/\fb)-1$.
	\item\label{itm:DSequencePrimeACI} $a_1,\ldots,a_g,h$ is a $d$-sequence.
	\item\label{itm:MaximalRegularSequenceAndThreeQuotientRings}	there exists a regular sequence $\mathbf{x}$ in $A$ whose image in $A/\fc$ is a regular sequence, whose image in  $A/\fa$ is  a maximal regular sequence,  and such that the image of $\mathbf{x},h$ in $A/\fb$ is a maximal regular sequence.
	\end{enumerate}
\begin{proof}
	(\ref{itm:DepthIdentity}) From the exact sequence $0\rightarrow A/\fb\overset{h}{\rightarrow }A/\fc\rightarrow A/\fa\rightarrow 0$, we conclude that $\depth(A/\fa)=\depth(A/\fb)-1$. Since $A/\fc$ is a complete intersection (thus Cohen-Macaulay) and $A/\fb$ is non-Cohen-Macaulay, so the statement follows easily from the long exact sequence obtained by $\Gamma_{\fm_A}(-)$ and the above exact sequence.

	(\ref{itm:DSequencePrimeACI})	We only need to show that $\fc:h^i=\fc:h$ for all $i\ge 2$, as we already know that $a_1,\ldots,a_g$ is a regular sequence. But  $\fc:h$ is a codimension $g$ prime quasi-Gorenstein ideal by our hypothesis while $\fc:h^i$   contains $\fc:h$,  so we are done once we can prove that $\fc:h^i$  also has codimension $g$. If $\text{height}\big(\fc:h^i\big)\ge g+1$  then $\fc:h^i\nsubseteq \bigcup\limits_{\mathfrak{p}\in \as(\fc)}\fp$, because $\mathfrak{c}$ is a complete intersection thus an unmixed ideal. It follows that then there exists some $x\in A$ such that  $(a_1,\ldots,a_g,x)$ is a regular sequence while $x\in \fc:h^i$. But then $h^i\in \fc$ implying that $\fa^i\subseteq \fc\subseteq \fb$, whence $\fa=\fb$ ($\fa$ is also a codimension $g$ prime ideal). But this is impossible because quasi-Gorenstein ideals are never almost complete intersection by  \cite[Proposition 1.1]{KunzAlmost}.

(\ref{itm:MaximalRegularSequenceAndThreeQuotientRings})  The element $h$ is regular on $A/\fb$ and since $\depth A/\fa =\depth A/(\fb,h)<\depth A/\fc <\depth A$,  the statement is a  simple result of the prime avoidance lemma (similar to what will be done in Step 4 of the proof of Theorem \ref{MonomialTheoremCharacteristicFree}).
\end{proof}
\end{lem}	  

The goal of the next lemma is to illustrate our situation in the last step of the proof of Theorem  \ref{MonomialTheoremCharacteristicFree}. 

\begin{lem}\label{LemmaEquationOfSymmetricAlgebra} 	Suppose that $\fa=(h,a_1,\ldots,a_g)$ is an ideal of a ring $A$.
 \begin{enumerate}[(i)]
 \item\label{itm:SymmetricAlgebraPresentation}  Let $A[Z_0,\ldots,Z_g]$ be the standard polynomial ring over $A$.
  The symmetric algebra  of $\fa$, denoted by $\text{Sym}_A(\fa)$,  as a graded $R$-algebra is isomorphic to  the quotient ring $$A[Z_0,\ldots,Z_g]/\mathfrak{d},\ \ \text{where}\  \mathfrak{d}=<\{r_0Z_0+\sum\limits_{i=1}^gr_iZ_i:r_0,\ldots,r_g\in A,\ hr_0+\sum\limits_{i=1}^gr_ia_i=0\}>.$$

 \item\label{itm:ColonIdealAsaSpecilizationOfTheSymmetricAlgebra}   We denote the sequence of   degree $1$ homogeneous elements 
 $h,a_1,\ldots,a_g\in \fa=\text{Sym}_A(\fa)_{[1]}$	by $ht,a_1t,\ldots,a_gt$.  Then 
 $$A/\big((a_1,\ldots,a_g):h\big)\cong
		\big({\text{Sym}_{A}(\fa)_{ht}}\big)_{[0]}/(a_1t/ht,\ldots,a_gt/ht).$$
 \end{enumerate}
 \begin{proof}
 	(\ref{itm:SymmetricAlgebraPresentation}) This is a folklore fact in the literature. The statement is, e.g., written in \cite[page 270]{HunekeSymmetric} without proof.
 	
 	(\ref{itm:ColonIdealAsaSpecilizationOfTheSymmetricAlgebra}) 	The dehomogenization with respect to $Z_0$
 	\begin{equation}
 	\label{EquationDehomogenization}
 	A[Z_0,\ldots,Z_g]/(\mathfrak{d},Z_0-1)\rightarrow \big((A[Z_0,\ldots,Z_g]/\mathfrak{d})_{Z_0}\big)_{[0]},\text{\ \ (induced by)\ } Z_i\mapsto (Z_i+\mathfrak{d})/Z_0,\ r\in A\mapsto (r+\mathfrak{d})/1
 	\end{equation}
 	  is an isomorphism. For the injectivity of this ring homomorphism, suppose that $\sum_{i=1}^tf_i+(\mathfrak{d},Z_0-1)$ belongs to the kernel where each $f_i$ is a non-zero monomial   of degree $n_i$. Then,   there exists $m\in \mathbb{N}_0$ with $\sum_{i=1}^t(Z_0^{m-n_i}f_i)\in \mathfrak{d}$. Since $Z_0$ is treated as $1$ in $A[Z_0,\ldots,Z_g]/(\mathfrak{d},Z_0-1)$ so the injectivity follows.
 	  
 	   The isomorphism (\ref{EquationDehomogenization}) and part (\ref{itm:SymmetricAlgebraPresentation}) yield  $$A[Z_0,\ldots,Z_g]/(\mathfrak{d},Z_0-1)\cong \big({\text{Sym}_{A}(\fa)_{ht}}\big)_{[0]},\text{\ \ (induced by)\ } Z_i\mapsto a_it/ht,\ Z_0\rightarrow 1,\ r\in A\mapsto r.$$ 
 	Consequently, 
  	\begin{align*}
 	A/\big((a_1,\ldots,a_g):h\big)&\overset{}{\cong}  A[Z_0,\ldots,Z_g]/(\mathfrak{d},Z_0-1,Z_1,\ldots,Z_g) && (\text{by the definition of}\ \mathfrak{d})
 	&\\&\cong  
 	\big({\text{Sym}_{A}(\fa)_{ht}}\big)_{[0]}/(a_1t/ht,\ldots,a_gt/ht)  && (\text{the previous display}).\qedhere
 \end{align*}
 \end{proof}
\end{lem}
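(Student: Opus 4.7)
The plan is to prove part (\ref{itm:SymmetricAlgebraPresentation}) from the universal property of the symmetric algebra applied to a free presentation of $\fa$, and then deduce part (\ref{itm:ColonIdealAsaSpecilizationOfTheSymmetricAlgebra}) by a standard dehomogenization argument at the degree-one element $ht$.

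For part (\ref{itm:SymmetricAlgebraPresentation}) I would consider the surjection $\pi:A^{g+1}\twoheadrightarrow \fa$ sending the standard basis of $A^{g+1}$ to the generators $h,a_1,\ldots,a_g$, and set $K:=\ker(\pi)$. By construction, $K$ is the first syzygy module, consisting of all tuples $(r_0,r_1,\ldots,r_g)\in A^{g+1}$ satisfying $r_0h+\sum_{i=1}^g r_ia_i=0$. Applying the right-exact functor $\text{Sym}_A(-)$ to the presentation $K\to A^{g+1}\to \fa\to 0$, and using the canonical identification $\text{Sym}_A(A^{g+1})=A[Z_0,\ldots,Z_g]$, one obtains
\[
\text{Sym}_A(\fa)\ \cong\ A[Z_0,\ldots,Z_g]/I,
\]
where $I$ is the ideal generated by the degree-one images of the elements of $K$. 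A syzygy $(r_0,\ldots,r_g)\in K$ maps to the linear form $r_0Z_0+\sum_{i=1}^g r_iZ_i$, so $I=\mathfrak{d}$ as claimed.

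For part (\ref{itm:ColonIdealAsaSpecilizationOfTheSymmetricAlgebra}) the plan is to dehomogenize the $\mathbb{N}_0$-graded ring $\text{Sym}_A(\fa)$ at the degree-one element $ht$, which corresponds to $Z_0$ under the isomorphism from (\ref{itm:SymmetricAlgebraPresentation}). First I would set up the ring homomorphism
\[
\Phi:\ A[Z_0,\ldots,Z_g]/(\mathfrak{d},Z_0-1)\ \longrightarrow\ \bigl(\text{Sym}_A(\fa)_{ht}\bigr)_{[0]},\qquad Z_i\mapsto (a_it)/(ht),\ \ Z_0\mapsto 1,
\]
and verify it is an isomorphism. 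Surjectivity is transparent because every degree-zero element of the localization can be written as a sum of monomials of the form $M(a_1t,\ldots,a_gt,ht)/(ht)^N$ with $M$ of total degree $N$. Injectivity requires more care: if the image of a class $[f]$ vanishes, then $(ht)^N$ times a representative of $f$ already lies in $\mathfrak{d}$ for some $N$, and substituting $Z_0=1$ recovers $f\in(\mathfrak{d},Z_0-1)$.

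With $\Phi$ in place, quotienting both sides by the images of $a_1t/ht,\ldots,a_gt/ht$, equivalently by $Z_1,\ldots,Z_g$, yields
\[
\bigl(\text{Sym}_A(\fa)_{ht}\bigr)_{[0]}/(a_1t/ht,\ldots,a_gt/ht)\ \cong\ A[Z_0,\ldots,Z_g]/(\mathfrak{d},Z_0-1,Z_1,\ldots,Z_g).
\]
In the right-hand ring, each generator $r_0Z_0+\sum_{i=1}^g r_iZ_i$ of $\mathfrak{d}$ collapses, after imposing $Z_0=1$ and $Z_1=\cdots=Z_g=0$, to the scalar relation $r_0=0$. As $(r_0,\ldots,r_g)$ ranges over $K$, the set of scalars $r_0$ that are killed is exactly $\{r_0\in A:\ r_0h\in(a_1,\ldots,a_g)\}=(a_1,\ldots,a_g):h$. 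Hence the right-hand side is isomorphic to $A/\bigl((a_1,\ldots,a_g):h\bigr)$, finishing the proof. The only nontrivial point in the argument is the injectivity of the dehomogenization map $\Phi$; everything else reduces to unwinding the explicit description of $\mathfrak{d}$.
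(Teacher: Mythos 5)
Your plan tracks the paper's proof very closely: part~(\ref{itm:ColonIdealAsaSpecilizationOfTheSymmetricAlgebra}) is deduced by exactly the same dehomogenization-at-$Z_0$ argument (the paper's map \eqref{EquationDehomogenization} is your $\Phi$ composed with the isomorphism of part~(\ref{itm:SymmetricAlgebraPresentation})), followed by killing $Z_1,\ldots,Z_g$ and reading off the ideal $(a_1,\ldots,a_g):h$ from the first coordinates of syzygies. The only genuine addition is that you sketch a proof of part~(\ref{itm:SymmetricAlgebraPresentation}) via right-exactness of $\text{Sym}_A(-)$ applied to a free presentation of $\fa$, whereas the paper cites this as folklore; your argument there is correct and standard. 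One small imprecision worth tightening in your injectivity step: if $f$ is not homogeneous, ``$(ht)^N$ times a representative of $f$'' is not quite the right object — you need to first homogenize $f$ by multiplying each degree-$n_i$ piece $f_i$ by the appropriate power of $Z_0$ (so that the element $\sum_i Z_0^{m-n_i}f_i$ is homogeneous and lies in $\mathfrak{d}$), and only then set $Z_0=1$ to recover $f$ modulo $(\mathfrak{d},Z_0-1)$. This is exactly how the paper phrases it, and without the homogenization the claimed membership in $\mathfrak{d}$ does not literally make sense since $\mathfrak{d}$ is generated in degree one but the cleared-denominator element is homogeneous of the form $\sum_a c_a Z_0^{N-|a|}Z_1^{a_1}\cdots Z_g^{a_g}$.
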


\begin{defi}
	\emph{We say that a ring $S$ is a \textit{locally complete intersection at codimension $\le 1$} precisely when $S_\mathfrak{p}$ is a complete intersection ring for each prime ideal $\fp$ of $S$ with $\text{height}(\fp)\le 1$.}
\end{defi}

The next lemma will be used in the proof of Proposition  \ref{StructureProposition}(\ref{itm:HomogeneousMaximalRegularSequenceOfEReesAlgebra}).

\begin{lem}(due to Raymond Heitmann)\label{LemmaHeitmann} Let $A$ be a ring and suppose that $a_1,\ldots,a_g$ is a regular sequence of $A$. Let $\mathfrak{A}:=(a_1,\ldots,a_g,h)A$ and suppose that $\big((a_1,\ldots,a_g):h^i\big)=\big((a_1,\ldots,a_g):h\big)$	for all $i\ge 2$.  Then for every $j$, $1\le j\le 	g$ and every positive integer $n$, $(a_1,\ldots,a_j)A\cap \mathfrak{A}^n=(a_1,\ldots,a_j)\mathfrak{A}^{n-1}$.
	\begin{proof}
		We first prove the $j=g$ case for all $n$. Obviously, $\mathfrak{A}^n=(a_1,\ldots,a_g)\mathfrak{A}^{n-1}+h^nA$ and so it suffices to  show that whenever $h^nw\in (a_1,\ldots,a_g)$, $h^nw\in (a_1,\ldots,a_g)\mathfrak{A}^{n-1}$. However, by the hypothesis, $hw\in (a_1,\ldots,a_g)$ and so $h^nw\in h^{n-1}(a_1,\ldots,a_g)\subseteq (a_1,\ldots,a_g)\mathfrak{A}^{n-1}$.
		
		Now, we handle the $j<g$ case by induction on $n$. The $n=1$ case holds trivially. So let $n\ge 2$. Suppose that $w\in (a_1,\ldots,a_j)\cap \mathfrak{A}^n$. By the $j=g$ case, we have $w\in (a_1,\ldots,a_g)\mathfrak{A}^{n-1}$. We can choose $u\le g$ minimal so that $w\in (a_1,\ldots,a_u)\mathfrak{A}^{n-1}$. We finish  the proof by induction on $u$. If $u\le j$, we are done. Otherwise, we write $w=w'+a_u\alpha$ with $w'\in (a_1,\ldots,a_{u-1})\mathfrak{A}^{n-1}$ and $\alpha\in \mathfrak{A}^{n-1}$. Since $a_1,\ldots,a_u$ is a regular sequence, $\alpha\in (a_1,\ldots,a_{u-1})\cap\mathfrak{A}^{n-1}$. By the induction assumption on $n$, we have   $\alpha\in (a_1,\ldots,a_{u-1})\mathfrak{A}^{n-2}$  and so $a_u\alpha\in (a_1,\ldots,a_{u-1})\mathfrak{A}^{n-1}$. By induction on $u$, the proof is complete.
	\end{proof}
\end{lem}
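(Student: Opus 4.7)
The strategy is a two-step induction. First, I would settle the extreme case $j=g$ for all $n\ge 1$ by a direct argument using only the colon hypothesis $(a_1,\dots,a_g):h^i=(a_1,\dots,a_g):h$. Once this is in hand, it guarantees that every element of $(a_1,\dots,a_j)\cap\mathfrak{A}^n$ lies in some $(a_1,\dots,a_u)\mathfrak{A}^{n-1}$ with $u\le g$, which lets me run a double induction on $(n,u)$ to push $u$ down to $j$.

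For the base case $j=g$: since $\mathfrak{A}^n=(a_1,\dots,a_g)\mathfrak{A}^{n-1}+h^nA$, any element of $(a_1,\dots,a_g)\cap\mathfrak{A}^n$ can be written, modulo $(a_1,\dots,a_g)\mathfrak{A}^{n-1}$, as $h^nw$ with $h^nw\in(a_1,\dots,a_g)$. The hypothesis gives $w\in(a_1,\dots,a_g):h^n=(a_1,\dots,a_g):h$, so $hw\in(a_1,\dots,a_g)$ and hence
\[
h^nw=h^{n-1}(hw)\in h^{n-1}(a_1,\dots,a_g)\subseteq (a_1,\dots,a_g)\mathfrak{A}^{n-1},
\]
which is exactly what is needed.

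For the case $j<g$, I would induct on $n$; the case $n=1$ is immediate since $\mathfrak{A}^0=A$. For $n\ge 2$, let $w\in(a_1,\dots,a_j)\cap\mathfrak{A}^n$. By the $j=g$ case we already have $w\in(a_1,\dots,a_g)\mathfrak{A}^{n-1}$, so the minimal integer $u$ with $w\in(a_1,\dots,a_u)\mathfrak{A}^{n-1}$ is well defined. I now run a secondary induction on $u$: if $u\le j$ the conclusion holds. Otherwise, write $w=w'+a_u\alpha$ with $w'\in(a_1,\dots,a_{u-1})\mathfrak{A}^{n-1}$ and $\alpha\in\mathfrak{A}^{n-1}$. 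Since $w\in(a_1,\dots,a_j)\subseteq(a_1,\dots,a_{u-1})$, we find $a_u\alpha\in(a_1,\dots,a_{u-1})$, and regularity of $a_1,\dots,a_g$ yields $\alpha\in(a_1,\dots,a_{u-1})\cap\mathfrak{A}^{n-1}$. The outer induction hypothesis (at level $n-1$) then gives $\alpha\in(a_1,\dots,a_{u-1})\mathfrak{A}^{n-2}$, so $a_u\alpha\in(a_1,\dots,a_{u-1})\mathfrak{A}^{n-1}$ and $w\in(a_1,\dots,a_{u-1})\mathfrak{A}^{n-1}$, contradicting minimality of $u$.

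The only subtle point is the interlocking of the two inductions: the induction on $u$ is internal to a fixed level $n$ and uses the induction on $n$ in a single place, namely to lower $\alpha$ from $\mathfrak{A}^{n-1}$ to $(a_1,\dots,a_{u-1})\mathfrak{A}^{n-2}$. Once this bookkeeping is clearly laid out, I do not expect any genuine obstacle beyond the careful use of the colon hypothesis in the base case.
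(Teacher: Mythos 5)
Your proof is correct and follows essentially the same route as the paper's: you first settle the case $j=g$ using the colon hypothesis to drop one power of $h$, and then run the same double induction on $(n,u)$ for $j<g$, writing a minimal-counterexample argument in place of the paper's phrasing "by induction on $u$." The only difference is that you spell out the intermediate step $a_u\alpha = w-w'\in(a_1,\ldots,a_{u-1})$ before invoking regularity, which the paper leaves implicit.
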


\begin{rem}\emph{
		The preceding lemma  and Proposition \ref{StructureProposition}(\ref{itm:CIQuotientOfEReesAlgebraIsNice}) are pointed out to the author by Raymond Heitmann and  the referee  to simplify dramatically  the (previous) author's proof of Proposition \ref{StructureProposition}(\ref{itm:HomogeneousMaximalRegularSequenceOfEReesAlgebra}) and (\ref{itm:SpecializationOfEReesAlgebraIsS2}) where the author had exploited Herzog-Simis-Vasconcelos $\mathcal{M}$-approximation complexes through a spectral sequence argument.  We are thankful to Raymond Heitmann and the referee for their favor. 
	}
\end{rem}

	
The extended Rees algebra $A[\fa t,t^{-1}]$ in the next proposition is a factorial domain. This factoriality follows from the hypothesis that $A/\fa$ is a locally complete intersection in codimension $\le 1$,  by virtue of \cite[Theorem 1]{HochsterCriteria} (see parts F and A) and \cite[Theorem 2.2]{HunekeAlmost}.  

\begin{prop}
	\label{StructureProposition} 
	Let $(A,\fm_A)$ be a regular local ring of dimension $d$ and $\mathfrak{a}:=(a_1,\ldots,a_g,h)$ be a prime almost complete intersection ideal of $A$, containing the codimension $g$ complete intersection ideal $\mathfrak{c}:=(a_1,\ldots,a_g)$. Suppose further that the quasi-Gorenstein ideal $\mathfrak{b}:=(a_1,\ldots,a_g):h$, which is linked to $\mathfrak{a}$, is a prime ideal such that $A/\fb$ is not Cohen-Macaulay. Denote by  $\mathfrak{M}$  the unique homogeneous maximal ideal of $A[\mathfrak{a}t,t^{-1}]$.   To force $A[\fa t,t^{-1}]$ to be a factorial domain, we assume that $A/\fa$ is  a locally complete intersection at codimension $\le 1$. Then the factorial extended Rees algebra $A[\mathfrak{a} t,t^{-1}]$ satisfies the following properties.
	\begin{enumerate}[(i)]
		\item\label{itm:BeingPrimeIdealHeightd}   $\mathfrak{P}:=(\fm_A,a_1t,\ldots,a_gt,t^{-1})$ is a height $d$ prime ideal.
		\item\label{itm:CIQuotientOfEReesAlgebraIsNice} Let $(A/\fc)[(h)s,s^{-1}]$ be the extended Rees algebra of   $h(A/\fc)$. Then $$(A/\fc)[(h)s,s^{-1}]\cong A[\fa t,t^{-1}]/(a_1t,\ldots,a_gt).$$
			\item\label{itm:QuotientOfEReesAlgebraIsACI}
		$(A/\mathfrak{a})[X]\cong A[\mathfrak{a} t,t^{-1}]/(a_1t,\ldots,a_gt,ht)$ \footnotemark{}.\\
		\addtocounter{footnote}{-1}
		\stepcounter{footnote}
		\footnotetext{We stress that the ideal $(a_1t,\ldots,a_gt,ht)$ is not a complete intersection, but it is generated by $g+1$ elements while it contains the length $g$ complete intersection $(a_1t,\ldots,a_gt)$.}
		
		\item\label{itm:HomogeneousMaximalRegularSequenceOfEReesAlgebra}
		Let $\mathbf{x}:=x_1,\ldots,x_u\subseteq A$ be the regular sequence of $A$ obtained in Lemma  \ref{GoodRegularSequenceCorollary}(\ref{itm:MaximalRegularSequenceAndThreeQuotientRings}), that  forms a maximal regular sequence on the almost complete intersection quotient domain $A/\mathfrak{a}$ and a regular sequence on the rings $A/\mathfrak{b},A/\fc$. Then
		$$
		t^{-1},a_1t,\ldots,a_gt,x_1,\ldots,x_u
		$$
		(and any of its permutation) is a regular sequence on $A[\mathfrak{a} t,t^{-1}]$. It is maximal with respect to the property of consisting of only homogeneous elements.

		\item\label{itm:SpecializationOfEReesAlgebraIsS2} For each $0\le i\le g-1$, $$a_{i+1}t\notin \bigcup\limits_{\map \in\text{Att}\Big({H^{d-i}_{\mathfrak{M}}\big({A[\fa t,t^{-1}]}_{\mathfrak{M}}/(a_1t,\ldots,a_{i}t)\big)\Big)}}\map.$$
	\end{enumerate}

\begin{proof} 
	(\ref{itm:BeingPrimeIdealHeightd})  We have a homogeneous  ring epimorphism $\pi:(A/\fm_A)[X]\overset{X\mapsto ht}{\longrightarrow} A[\fa t,t^{-1}]/\mathfrak{P}$. Since $(A/\fm_A)[X]$ is a principal ideal domain and  $\pi$ is homogeneous so either $\pi$ is an isomorphism or $\ker(\pi)=(X^i)$ for some $i\in \mathbb{N}$. We show that the latter is impossible. If it were possible, then $(ht)^i\in \mathfrak{P}=(a_1t,\ldots,a_gt,\fm_A,t^{-1})$ which implies that $$h^i\in \fm_A \fa^i+\sum\limits_{j=1}^ga_j\fa^{i-1}+\fa^{i+1}\subseteq (\fc,\fm_Ah^i,h^{i+1})\subseteq (\fc,\fm_Ah^i).$$ It follows that $(1+m)h^i\in \fc$ for some $m\in \fm_A$, i.e. $h^i\in \fc$. But this contradicts  the fact that $$\fc:h^i\overset{\text{Lemma  \ref{GoodRegularSequenceCorollary}(\ref{itm:DSequencePrimeACI})}}{=}\fc:h=\fb$$ is our (quasi-Gorenstein) prime ideal.
	
	(\ref{itm:CIQuotientOfEReesAlgebraIsNice}) It is obvious that $\big(A[\fa t,t^{-1}]/(a_1t,\ldots,a_gt)\big)_{[0]}=A/\fc$. Set  $\fa^i=A$ for any $i\le 0$. The identities 
	$$\forall\ i\in \mathbb{Z},\ \  \big(A[\fa t,t^{-1}]/(a_1t,\ldots,a_gt)\big)_{[i]}=\fa^i/\fc\fa^{i-1}\overset{\text{Lemma \ref{LemmaHeitmann}}}{=}\fa^{i}/(\fc\cap \fa^{i}),$$
	provide us with the family of isomorphisms $$\varphi_i:\big(A[\fa t,t^{-1}]/(a_1t,\ldots,a_gt)\big)_{[i]}\rightarrow h^i(A/\fc),\ \ \ h^i+(\fc\fa^{i-1})\mapsto h^i+\fc.$$
	Evidently, the maps $\varphi_i$ yield a homogeneous ring isomorphism $A[\fa t,t^{-1}]/(a_1t,\ldots,a_gt)\rightarrow (A/\fc)[(h)s,s^{-1}]$ as claimed in the statement.
	
	
	
		(\ref{itm:QuotientOfEReesAlgebraIsACI}) This isomorphism  is easy, because the natural ring homomorphism $A \rightarrow A[\fa t,t^{-1}]/(a_1t,\ldots,a_gt,ht)$ induces a homogeneous ring epimorphism $$(A/\mathfrak{a})[X]\rightarrow A[\mathfrak{a} t,t^{-1}]/(a_1t,\ldots,a_gt,ht),\ \ X\mapsto t^{-1}$$ and to see that this is injective, it suffices to note that each homogeneous element in the kernel is zero.

  (\ref{itm:HomogeneousMaximalRegularSequenceOfEReesAlgebra}) 	 
  Applying  Lemma \ref{LemmaHeitmann}, it is straightforward to  observe that $t^{-1},a_1t,\ldots,a_gt$ is a regular sequence on $A[\fa t,t^{-1}]$. More immediately, in light of Lemma \ref{LemmaHeitmann} and \cite[Proposition 3.5]{HuckabaMarleyHilbert}  the sequence $a_1t,\ldots,a_gt$ forms a regular sequence on  $\text{gr}_{\fa}(A)=A[\fa t,t^{-1}]/(t^{-1})$. Hence, we should show that $\mathbf{x}$ forms a regular sequence on $$A[\fa t,t^{-1}]/(t^{-1},a_1t,\ldots, a_gt)=(A/\fc)[(h)s,s^{-1}]/(s^{-1})=\text{gr}_{h(A/\fc)}(A/\fc).$$

  But for each $i\ge 1$ there is an isomorphism $A/(\fb,h)\overset{\ 1\mapsto h^i}{\longrightarrow}(h^i,\fc)/(h^{i+1},\fc)$ (by Lemma  \ref{GoodRegularSequenceCorollary}(\ref{itm:DSequencePrimeACI})). Therefore 
  \begin{equation}
  	\label{EquationAGrRingIsAaDSumAbAndh}
  	\text{gr}_{h(A/\fc)}(A/\fc)=A/\fa\oplus \big(\oplus_{i=1}^\infty A/(\fb,h)\big)
  \end{equation}
   as  $A/\fc$-modules,  so the sequence  $\mathbf{x}$ is  regular on $\text{gr}_{A/\fc}\big(h(A/\fc)\big)$ as needed.

	It is proved that the desired sequence is regular on $A[\fa t,t^{-1}]$.	 To see that this is a maximal homogeneous regular sequence, note that the quotient of $A[\mathfrak{a} t,t^{-1}]$ by this regular sequence is generated by only one element of degree $1$ that is the residue class of $ht$. So if we can extend this regular sequence by adding a homogeneous element, the new element would be either some power of $ht$ or an element of degree zero. The latter is impossible, because the degree zero subring of $A[\fa t,t^{-1}]/(t^{-1},a_1t,\ldots,a_gt,x_1,\ldots,x_u)$ is the almost complete intersection $A/\big(\fa+(\mathbf{x})\big)$ which has depth zero by our choice of $\mathbf{x}$. The former case is also impossible, otherwise $t^{-1},a_1t,\ldots,a_gt,ht$ would be a regular sequence of $A[\fa t,t^{-1}]$ which contradicts with $A[\fa t,t^{-1}]/(t^{-1},a_1t,\ldots,a_gt,ht)=A/\fa$ (part (\ref{itm:QuotientOfEReesAlgebraIsACI})), because then $A[\fa t,t^{-1}]_{\mathfrak{M}}$,  as a deformation of an almost complete intersection, would be an almost complete intersection by \cite[Theorem 2.3.4(a)]{BrunsHerzogCohenMacaulay} and \cite[Theorem 21.1(iii)]{Matsumura}. However, almost complete intersections are not quasi-Gorenstein (\cite[Proposition 1.1]{KunzAlmost}) while $A[\fa t,t^{-1}]_{\mathfrak{M}}$ is a factorial domain that is a homomorphic image of a regular local ring  and so it is quasi-Gorenstein (see e.g. \cite[Lemma (2.4)]{FossumFoxbyEtAlMinimalInjective}).
	
	(\ref{itm:SpecializationOfEReesAlgebraIsS2}) First we show that $Q:=A[\fa t,t^{-1}]/(a_1t,\ldots,a_gt)$ satisfies the $(S_2)$-condition. Let $\fp\in \text{Spec}(A[\fa t,t^{-1}])$  such that it contains $(a_1t,\ldots,a_gt)$. 
	 If $t^{-1}\notin \fp$, then $A[\fa t,t^{-1}]$ is regular at $\fp$ due to the regularity of $A[\fa t,t^{-1}]_{t^{-1}} \cong A[t,t^{-1}]$, whence $Q$ is a complete intersection at $\mathfrak{p}$ by part (iii).

	 If $t^{-1}\in \fp$,  then we are done once we can show that
	$$
	G:=\text{gr}_{\fa}(A)/\big(a_1+\fa^2,\dots,a_g+\fa^2\big)\cong Q/(t^{-1})
	$$
	satisfies the $(S_1)$-condition (again  $t^{-1}$ is a regular element of $Q$ in view of part (\ref{itm:HomogeneousMaximalRegularSequenceOfEReesAlgebra})), because then $\depth(G_{\fp})\ge \min\{\dim(G_{\fp}),1\}$ implying that $\depth(Q_{\fp})\ge\min\{\dim(Q_{\fp}),2\}$, as required.  
	In the rest of the proof, by abuse of notation, we consider  $\fp$  as a prime ideal of $G$. We also denote the image in $G$, or in any quotient of $A$, of any  element  $r\in A$  by the same notation $r$.

	 If $ht\notin\fp$, then by part (\ref{itm:CIQuotientOfEReesAlgebraIsNice})	  (and Lemma \ref{GoodRegularSequenceCorollary}(\ref{itm:DSequencePrimeACI}) for the second isomorphism)
	 \begin{align*}
	 	Q_{ht}
	 	&\overset{}{\cong} \Big((A/\fc)[(h)s,s^{-1}]\Big)_{hs}
	 	&\\&
	 	\cong \Big((A/\fc)[X,Y]/\big(XY-h,(0:_{A/\fc}h)X\big)\Big)_{X}, &&  (X\mapsto hs,\ Y\mapsto s^{-1} )
	 	&\\&
	 	\cong \Big((A/\fc)[X,Y]/\big(XY-h,0:_{A/\fc}h\big)\Big)_{X}
	 	&\\&\cong 
	 	\Big((A/\fb)[X,Y]/(XY-h)\Big)_{X}.
	 \end{align*}
	 Thus $$G_{ht}=\big(Q/(t^{-1})\big)_{ht}\cong \Big((A/\fb)[X,Y]/(XY-h,Y)\Big)_{X}\cong \Big(\big(A/(\fb,h)\big)[X]\Big)_{X}$$ which is indeed $(S_1)$ because $A/\fb$ is an $(S_2)$-domain.
	 
	 It remains only to discuss   the case where $t^{-1},ht\in \mathfrak{p}$.  Let $\fq:=\fp\cap A/\fa=\fp\cap G_{[0]}$. We can assume furthermore that $\fa\subsetneq \fq$, because  otherwise   $\text{height}_G(\fp)=0$ necessarily and then there is nothing to prove. Let $r\in \fq\backslash \fa$. If $r$ does not belong to any associated  prime (or equivalently minimal prime) of $(\fb,h)$, then  $r$ is regular on $G=\text{gr}_{h(A/\fc)}(A/\fc)$ in view of display (\ref{EquationAGrRingIsAaDSumAbAndh}) and thus $\fp$ contains a regular element as needed. If otherwise,   
	 there exists some $s\in A\backslash (\fb,h)$ such that $sr\in (\fb,h)$. Note that $sr\notin \fa$ because $\fa$ is a prime ideal. Then, there is some $\beta\in \fb\backslash \fa$ such that $sr+\fa=\beta+\fa$, thus $\beta\in (\fq\cap \fb)\backslash \fa$. Consequently, $\beta+ht\in \fp$ (here the image of $ht$ in $G=Q/(t^{-1})$ is denoted by the same notation $ht$). Then it is readily seen that $\beta +ht\in \fp$ is a regular element of $G$ (because $\fa$ is a prime ideal, $0:_Ght=0:_G(ht)^2$ as can easily be verified and $\beta(ht)=0$ in $G$).

	  It follows that $Q=A[\fa t,t^{-1}]/(a_1t,\ldots,a_gt)$ satisfies the $(S_2)$-condition, as was to be proved. Hence in  light of  \cite[Corollaire (5.12.4)]{GrothendieckEGAIVII}, each $A[\fa t,t^{-1}]_{\mathfrak{M}}/(a_1t,\ldots,a_it)$ satisfies $(S_2)$-condition ($1\le i \le g$). So our statement follows from the equivalent conditions of \cite[Corollary 2.8]{TavanfarTousiAStudy} (again we are using the fact that $A[\fa t,t^{-1}]_{\mathfrak{M}}$ is a factorial domain and quasi-Gorenstein).	 	 
\end{proof}
	
	
\end{prop}

\section{A  new variant of the Canonical Element Theorem}\label{SectionNewVariant}

\subsection{Various aspects of our new variant of the Canonical Element Theorem}

We begin this section by  recalling the definition of the canonical element  with respect to a primary ideal to the maximal ideal.

\begin{defi} \label{DefinitionCanonicalElement}
	\emph{(see \cite[Definition 3.2]{HochsterCanonical}) Let $(S,\fm_S)$ be a $d$-dimensional local ring, $\fq$ be an $\fm_S$-primary ideal and   $F_\bullet$ be a free resolution of $S/\fq$. Consider the natural epimorphism $\pi:F_{d}\rightarrow \text{syz}^{F_\bullet}_d(S/\fq)$  defining the element $$\epsilon_{\fq}:=[\pi]\in \ext^{d}_S\big(S/\fq,\text{syz}^{F_\bullet}_{d}(S/\fq)\big).$$
		Let $\eta_\fq$  be the image of $\epsilon_{\fq}$ under the natural map  to the local cohomology $$\Theta^{\text{syz}^{F_\bullet}_{d}(S/\fq)}_\fq:\ext^{d}_{S}\big(S/\fq,\text{syz}_{d}^{F_\bullet}(S/\fq)\big)\rightarrow \lim\limits_{\overset{\longrightarrow}{m\in \mn}}\ext^{d}_{S}\big(S/\fq^m,\text{syz}_{d}^{F_\bullet}(S/\fq)\big)
		.$$  Then $\eta_\fq$ is said to be the \textit{canonical element} of $S$ with respect to $\fq$. 	The element $\eta_{\fm_S}$ is called  the  \textit{canonical element} of $S$ and it is denoted, also, by $\eta_S$. We  recall that for any $s\in S$, the vanishing/non-vanishing of $s.\epsilon_\fq$ and $s.\eta_\fq$ is independent of the chosen resolution $F_\bullet$ (\cite[Proposition  (3.3)(c)]{HochsterCanonical}).}
\end{defi}

	 \begin{rem}\label{RemarkMappingConeFreeResolution} \emph{Let $\fp$ be  a prime ideal of a  ring $S$ and      $y\in S\backslash \fp$ be a non-unit. If $F_\bullet$ is the minimal free resolution of $S/\fp$, then the mapping cone $${_wG_\bullet}:=\text{cone}(y^w.\id_{F_\bullet}:F_\bullet \overset{}{\rightarrow} F_\bullet)$$ provides us with the minimal free resolution of $S/(\fp,y^w)$. This fact can be observed via  the exact sequence of complexes
		\begin{equation} 
		\label{EquationMappingConeExactSequence}
		0\rightarrow F_\bullet \rightarrow {{_wG_\bullet}}\rightarrow F_\bullet[-1]\rightarrow 0
		\end{equation}  
		inducing the homology long exact sequence
			\begin{equation*}
			\label{EquationExactSequencesOnHomologies}
			\cdots\rightarrow	H_i(F_\bullet)\overset{y^w}{\rightarrow} H_i(F_\bullet)\rightarrow H_{i}({_wG_\bullet})\rightarrow H_{i-1}(F_\bullet)\overset{y^w}{\rightarrow}H_{i-1}(F_\bullet) \rightarrow \cdots
			\end{equation*}
			as $y^{w}$ forms a non-zero divisor over the integral domain $S/\fp$ (see  \cite[Lemma 10.38]{Rotman} for a reference to these exact sequences).}
  \end{rem}

In the statement of the following lemma, all syzygies are obtained from the minimal free resolutions. So when we are denoting a syzygy in the lemma, we will omit the particular free resolution by which the syzygy is computed.

\begin{lem}\label{LemmaSysyzgiesRelations}  Let $\fp$ be a prime ideal of a  ring $S$, $y\in S\backslash \fp$ be a non-unit and  $i,s,t\in \mathbb{N}$ with $t\ge s$ ($s$ and $t$ stand for powers of $y$, while $i$ stands for the $i$-th syzygy). Consider the minimal free resolutions ${_sG_\bullet}$  and ${_tG_\bullet}$ of $S/(\fp,y^s)$ and $S/(\fp,y^t)$ respectively, as mentioned in Remark  \ref{RemarkMappingConeFreeResolution}.
	\begin{enumerate}[(i)]
		\item \label{itm:SyzygiesMapsBetweenThem} There are chain maps $${_{t,s}\lambda_\bullet}:{_tG_\bullet}\rightarrow {_sG_\bullet}, \ \ \  {_{s,t}\lambda_\bullet}:{_sG_\bullet}\rightarrow {_tG_\bullet}$$ inducing $S$-homomorphisms respectively $$f_{t,s}:\text{syz}_{i}\big(S/(\fp,y^t)\big)\overset{}{\rightarrow} \text{syz}_{i}(S/(\fp,y^s)),\ \ \ f_{s,t}:\text{syz}_{i}\big(S/(\fp,y^s)\big)\overset{}{\rightarrow} \text{syz}_{i}\big(S/(\fp,y^t)\big)$$ such that $$f_{s,t}\circ f_{t,s}=y^{t-s}\id_{\text{syz}_{i}\big(S/(\fp,y^t)\big)},\ \ \ {_{s,t}\lambda_\bullet}\circ {_{t,s}\lambda_{\bullet}}=y^{t-s}\id_{{_{t}G_\bullet}},\ \ \ {_{t,s}\lambda_0}=\id_S.$$
		\item \label{itm:SyzygyWtoSyzygy1ExactSequence}  If $y$ is a regular element of $S$, then there is an exact sequence $$0\rightarrow \text{syz}_{i}\big(S/(\fp,y^t)\big)\overset{f_{t,s}}{\rightarrow} \text{syz}_{i}\big(S/(\fp,y^s)\big)\rightarrow \text{syz}_{i-1}(S/\fp)/y^{t-s}\text{syz}_{i-1}(S/\fp)\rightarrow 0.$$
		\item \label{itm:Syzygy1toSyzygyW} If $y$ is a regular element of $S$, then there is an exact sequence $$0\rightarrow \text{syz}_{i}\big(S/(\fp,y^s)\big)\overset{f_{s,t}}{\rightarrow} \text{syz}_{i}\big(S/(\fp,y^t)\big)\rightarrow \text{syz}_{i}(S/\fp)/y^{t-s}\text{syz}_{i}(S/\fp)\rightarrow 0.$$
	\end{enumerate}
	\begin{proof}
		 In view of the rules of the differentials of  ${_sG_\bullet}$ and ${_tG_\bullet}$, there is a chain map  ${_{t,s}\lambda_\bullet}:{_tG_\bullet}\rightarrow {_sG_\bullet}$ given by $$\begin{pmatrix} \id_{F_j} &  0 \\ 0 & y^{t-s}\id_{F_{j-1}}  \end{pmatrix}:\underset{{_tG_j}}{\underbrace{F_j\oplus F_{j-1}}}\overset{}{\rightarrow} \underset{{_sG_j}}{\underbrace{F_j\oplus F_{j-1}}}$$ for each $j$. By definition, ${_{t,s}\lambda_0}=\id_S$. This chain map induces   $f_{t,s}:\text{syz}_i\big(S/(\fp,y^t)\big)\rightarrow \text{syz}_i\big(S/(\fp,y^s)\big)$ which fits in the  diagram with exact rows of $S$-modules and homomorphisms
		$$
		\begin{CD}
		0@>>> \text{syz}_i(S/\fp) @>>> \text{syz}_{i}\big(S/(\fp,y^t)\big)@>>> \text{syz}_{i-1}(S/\fp) @>>> 0 \\
		@. @| @VVf_{t,s} V @VVy^{t-s}\id_{\text{syz}_{i-1}(S/\fp)}V @.\\
		0@>>> \text{syz}_i(S/\fp) @>>> \text{syz}_{i}\big(S/(\fp,y^s)\big)@>>> \text{syz}_{i-1}(S/\fp) @>>> 0 \\
		\end{CD}
		$$
		where the exact sequence of rows exist in view of display (\ref{EquationMappingConeExactSequence}) in the previous remark. Now, part (\ref{itm:SyzygyWtoSyzygy1ExactSequence}) follows after applying the  Snake Lemma to the above diagram.
		
		Dual to the above chain map, one can see that there is a chain map ${_{s,t}\lambda_\bullet}:{_sG_\bullet}\rightarrow {_tG_\bullet}$ given by $$\begin{pmatrix} y^{t-s}\id_{F_j} &  0 \\ 0 & \id_{F_{j-1}}  \end{pmatrix}:\underset{{_sG_j}}{\underbrace{F_j\oplus F_{j-1}}}\overset{}{\rightarrow} \underset{{_tG_j}}{\underbrace{F_j\oplus F_{j-1}}}$$ which similarly as above induces $f_{s,t}:\text{syz}_{i}\big(S/(\fp,y^s)\big)\rightarrow\text{syz}_{i}\big(S/(\fp,y^{t})\big)$ fitting in the diagram with exact rows
		\vspace{3mm}
		$$
		\begin{CD}
		0@>>> \text{syz}_i(S/\fp) @>>> \text{syz}_{i}\big(S/(\fp,y^s)\big)@>>> \text{syz}_{i-1}(S/\fp) @>>> 0 \\
		@. @VVy^{t-s}\id_{\text{syz}_{i}(S/\fp)}V @VVf_{s,t} V @| @.\\
		0@>>> \text{syz}_i(S/\fp) @>>> \text{syz}_{i}\big(S/(\fp,y^t)\big)@>>> \text{syz}_{i-1}(S/\fp) @>>> 0 \\
		\end{CD}.
		$$
		\vspace{1mm}

		Again part (\ref{itm:Syzygy1toSyzygyW}) follows from the Snake Lemma and the above diagram. The last statement of part (\ref{itm:SyzygiesMapsBetweenThem}) is evident from the definition of the aforementioned chain maps.
	\end{proof}
\end{lem}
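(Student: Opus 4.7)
For part (i), I would construct the chain maps by block-diagonal matrices on the mapping cone layers ${_wG_j}=F_j\oplus F_{j-1}$ from Remark \ref{RemarkMappingConeFreeResolution}. Define
\[
{_{t,s}\lambda_j}=\begin{pmatrix}\id_{F_j}&0\\ 0&y^{t-s}\id_{F_{j-1}}\end{pmatrix},\qquad {_{s,t}\lambda_j}=\begin{pmatrix}y^{t-s}\id_{F_j}&0\\ 0&\id_{F_{j-1}}\end{pmatrix}.
\]
Direct computation against the mapping cone differential $\begin{pmatrix}\partial^F_j&\pm y^w\id_{F_{j-1}}\\ 0&-\partial^F_{j-1}\end{pmatrix}$ shows each is a chain map; the only nontrivial identity needed is $y^{t-s}\cdot y^s=y^s\cdot y^{t-s}=y^t$. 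The composition ${_{s,t}\lambda_j}\circ{_{t,s}\lambda_j}=y^{t-s}\id_{{_tG_j}}$ is immediate from the diagonal form. Passing to cokernels of $\partial_{i+1}$ produces the induced maps $f_{t,s}$ and $f_{s,t}$ with the desired composition identity on syzygies, and ${_{t,s}\lambda_0}=\id_{S}$ since ${_wG_0}=F_0\oplus F_{-1}=F_0$.

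For parts (ii) and (iii), I would first establish that for each $w\ge 1$ and each $i\ge 1$ there is a short exact sequence
\[
0\to\text{syz}_i(S/\fp)\to\text{syz}_i\big(S/(\fp,y^w)\big)\to\text{syz}_{i-1}(S/\fp)\to 0.
\]
This comes from applying the snake lemma to the diagram whose rows are the degree-wise split exact sequences $0\to F_j\to{_wG_j}\to F_{j-1}\to 0$ at $j=i+1$ and $j=i$, and whose vertical maps are $\partial^F$, $\partial^{{_wG}}$ and $-\partial^F$. The three cokernels are precisely the three syzygies displayed above; by acyclicity of $F_\bullet$ and ${_wG_\bullet}$ the three kernels are the corresponding images of $\partial_{i+2}$; and the snake connecting map $\text{im}(\partial^F_{i+1})\to\text{syz}_i(S/\fp)$ is seen to be zero by lifting $x\in\text{im}(\partial^F_{i+1})\subseteq F_i$ to $(0,x)\in{_wG_{i+1}}$, applying $\partial^{{_wG}}_{i+1}$ to obtain $y^wx\in F_i$, and noting that $y^wx$ already lies in $\text{im}(\partial^F_{i+1})$. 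This argument uses nothing about regularity of $y$.

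With these two short exact sequences in hand (one for $w=s$, one for $w=t$), I assemble them into a commutative ladder whose middle vertical map is the induced $f_{t,s}$ (for (ii)) or $f_{s,t}$ (for (iii)). The block-diagonal form of the chain maps from part (i) identifies the outer vertical maps as $\id$ on one side and $y^{t-s}\id$ on the other, in opposite orders between the two parts. For (ii) the ladder is
\[
\begin{CD}
0@>>>\text{syz}_i(S/\fp)@>>>\text{syz}_i(S/(\fp,y^t))@>>>\text{syz}_{i-1}(S/\fp)@>>>0\\
@.@VV\id V@VVf_{t,s}V@VVy^{t-s}\id V@.\\
0@>>>\text{syz}_i(S/\fp)@>>>\text{syz}_i(S/(\fp,y^s))@>>>\text{syz}_{i-1}(S/\fp)@>>>0
\end{CD}
\]
and the snake lemma identifies $\ker(f_{t,s})$ with $\ker\bigl(y^{t-s}\id\colon\text{syz}_{i-1}(S/\fp)\to\text{syz}_{i-1}(S/\fp)\bigr)$ and $\coker(f_{t,s})$ with $\text{syz}_{i-1}(S/\fp)/y^{t-s}\text{syz}_{i-1}(S/\fp)$. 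Part (iii) follows from the symmetric diagram with the outer columns swapped. The single technical point, and the one genuine obstacle, is the vanishing of the relevant kernel: this is exactly where the hypothesis ``$y$ regular on $S$'' enters (in contrast to part (i)), since each $\text{syz}_j(S/\fp)$ for $j\ge 1$ embeds into the free module $F_{j-1}$ by acyclicity of $F_\bullet$ while $\text{syz}_0(S/\fp)=S/\fp$ is a domain, and $S$-regularity of $y$ therefore forces $y^{t-s}$ to act injectively on every syzygy.
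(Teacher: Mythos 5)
Your proposal is correct and follows essentially the same route as the paper: the same block-diagonal chain maps ${_{t,s}\lambda_\bullet}$, ${_{s,t}\lambda_\bullet}$, the same short exact sequences of syzygies coming from the mapping-cone sequence $0\rightarrow F_\bullet\rightarrow {_wG_\bullet}\rightarrow F_\bullet[-1]\rightarrow 0$, and the same snake-lemma ladders with outer maps $\id$ and $y^{t-s}\id$. The only difference is that you spell out two points the paper leaves implicit, namely the vanishing of the connecting map for the row sequences and the injectivity of $y^{t-s}$ on $\text{syz}_j(S/\fp)$ (via the embedding into $F_{j-1}$ for $j\ge 1$ and the domain $S/\fp$ for $j=0$), and both verifications are accurate.
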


The next proposition is, roughly speaking, a part of  Proposition-Definition \ref{PropositionVarientCET}.

\begin{prop}\label{PropositionCanonicalElementTimesParameter}
	Let $(S,\fm_S)$ be a local ring of dimension $d$ which is a homomorphic image of a Gorenstein ring (thus it admits a canonical module $\omega_S$). Let $\fq$ be an ideal of $S$ of height $d-1$ and $x_1$ be an element of $S$ such that $(\fq,x_1)$ is $\fm_S$-primary. The following statements are equivalent (for the notation used here see Definition \ref{DefinitionCanonicalElement}):
	\begin{enumerate}[(i)]
		\item \label{itm:CanonicalElementScalarMultiplication} $x_1^{w-1}\eta_{(\fq,x_1^w)}\neq 0$.
		\item \label{itm:SomeMapFromExtToLocalCohomologyNonZero} Let $\nu_w:S/(\fq,x_1)\rightarrow S/(\fq,x_1^w)$ be the multiplication map by $x_1^{w-1}$.  The following  map is non-zero: $$\Ext^d_{S}\big(S/(\fq,x_1^w),\omega_S\big)\overset{\Ext^d_S(\nu_w,\id_{\omega_S})}{\longrightarrow}\Ext^d_{S}\big(S/(\fq,x_1),\omega_S\big)\overset{\Theta^{\omega_S}_{(\fq,x_1)}}{\longrightarrow}H^{d}_{(\fq,x_1)}(\omega_S).$$ 
		\item \label{itm:AnotherMapFromExtToLocalCohomologyNonZero} The following  map is non-zero:
		$$ \Ext^d_{S}\big(S/(\fq,x_1^w),\omega_S\big)\overset{\Theta^{\omega_S}_{(\fq,x_1^w)}}{\longrightarrow}H^{d}_{(\fq,x_1^w)}(\omega_S) \overset{x_1^{w-1}}{\longrightarrow}H^{d}_{(\fq,x_1^w)}(\omega_S).$$
	\end{enumerate}
  \begin{proof}
   	 (\ref{itm:CanonicalElementScalarMultiplication})$\Leftrightarrow$ (\ref{itm:SomeMapFromExtToLocalCohomologyNonZero}): Our proof is an adaptation of the proof of \cite[Theorem (4.3)]{HochsterCanonical} to the situation of our lemma. Let  $$\Psi_M:H^d_{(\fq,x_1)}(M)\rightarrow H^d_{(\fq,x_1^w)}(M)$$ be the isomorphism induced by the maps $\Ext^d_S\big(S/(\fq,x_1)^n,M\big)\rightarrow \Ext^d_S\big(S/(\fq,x_1^w)^n,M\big)$ each of which, itself, is induced by the natural epimorphism $\pi_n:S/(\fq,x_1^w)^n\rightarrow S/(\fq,x_1)^n$.  Let us also to denote $\Psi_{\omega_S}$ by $\Psi$. First we prove two claims.
   	 
     \vspace{2mm}
  	 \textbf{Claim 1:} $\Psi_M\circ \Theta^M_{(\fq,x_1)}\circ \Ext^d_S\big(\nu_w,\id_{M}\big)=x_1^{w-1}.\Theta^M_{(\fq,x_1^w)}$ for each $S$-module $M$.

  	 Proof of the claim:  Our claim easily follows from the identities
  	 \begin{align*}
  	   \Psi_M\circ \Theta^M_{(\fq,x_1)}\circ \Ext^d_S\big(\nu_w,\id_{M}\big)&=\Theta^M_{(\fq,x_1^w)}\circ \Ext^d_S(\pi_1,\id_{M})\circ \Ext^d_S\big(\nu_w,\id_{M}\big)&\\&=\Theta^M_{(\fq,x_1^w)}\circ \Ext^d_S\big(x_1^{w-1}\id_{S/(\fq,x_1^w)},\id_{M}\big).
  	 \end{align*}
  	 
  	 \vspace{2mm}
  	 \textbf{Claim 2:} If there is some $S$-module $M$ for which  the map $$\Theta^M_{(\fq,x_1)}\circ \Ext^d_S(\nu_w,\id_M):\Ext^d_S\big(S/(\fq,x_1^w),M\big)\rightarrow H^d_{(\fq,x_1)}(M)$$ is non-zero, then $x_1^{w-1}\eta_{(\fq,x_1^w)}\neq 0$.
  	 
  	 Proof of the claim: Let $(G_\bullet,\partial^{G_\bullet}_{\bullet})$ be a free resolution of $S/(\fq,x_1^w)$, and pick some $\varphi:G_d\rightarrow M$ such that $[\varphi]\in \Ext^d_{S}\big(S/(\fq,x_1^w),M)\big)$ has non-zero image under $\Theta^M_{(\fq,x_1)}\circ \Ext^d_S(\nu_w,\id_M)$. As $\varphi$ is a cycle,  $\varphi$ factors through $\overline{\varphi}:G_d/\im(\partial^{G_\bullet}_{d+1})=\text{syz}^{G_\bullet}_d\big(S/(\fq,x_1^w)\big)\rightarrow M$. By our hypothesis and the bijectivity of $\Psi_M$,
  	  \begin{equation} 
  	  \label{EquationClaim1ImpliesNonZero}
  	    x_1^{w-1}.\Theta^M_{(\fq,x_1^w)}([\varphi])\overset{\text{Claim 1}}{=}\big(\Psi_M\circ \Theta^M_{(\fq,x_1)}\circ \Ext^d_S(\nu_w,\id_M)\big)([\varphi])\neq 0.
  	  \end{equation}  
  	In view of the  definition of $\epsilon_{(\fq,x_1^w)}$, it is readily seen that 
  	\begin{equation}
  	  \label{EquationInterPretationOFVarphi}
  	  \Ext^d_S\big(S/(\fq,x_1^w),\overline{\varphi}\big)(\epsilon_{(\fq,x_1^w)})=[\varphi].
  	\end{equation}  
  	   Therefore,
  	   \begin{align*}
  	     H^d_{(\fq,x_1^w)}(\overline{\varphi})(x_1^{w-1}\eta_{(\fq,x_1^w)})&=
  	     x_1^{w-1}H^d_{(\fq,x_1^w)}(\overline{\varphi})\circ \Theta^{\text{syz}^{G_\bullet}_d\big(S/(\fq,x_1^w)\big)}_{(\fq,x_1^w)}(\epsilon_{(\fq,x_1^w)})&\\&=
  	     x_1^{w-1}\Theta^M_{(\fq,x_1^w)}\circ \Ext^d_S\big(S/(\fq,x_1^w),\overline{\varphi}\big)(\epsilon_{(\fq,x_1^w)})&\\&=
  	     x_1^{w-1}\Theta^M_{(\fq,x_1^w)}([\varphi]), && \text{by Eq. (\ref{EquationInterPretationOFVarphi}}) &\\&
  	     \neq 0, && \text{by Eq. (\ref{EquationClaim1ImpliesNonZero}})
  	   \end{align*}  
  	   so Claim 2 follows.

One can readily verify that $x_1^{w-1}\eta_{(\fq,x_1^w)}\neq 0$ in $S$ if and only if $x_1^{w-1}\eta_{(\fq,x_1^w)\widehat{S}}\neq 0$ in $\widehat{S}$, one approach would be using the faithful flatness of the completion map which induces an injective map on the relevant Ext-modules as well as considering  each canonical element as an element of the direct limit of Ext-modules (see also \cite[Corollary (3.6) and Proposition (3.18)]{HochsterCanonical}). Similarly, the composited map in part (\ref{itm:SomeMapFromExtToLocalCohomologyNonZero}) is non-zero if and only if the analogue map $\Ext^d_{\widehat{S}}\big(\widehat{S}/(\fq,x_1^w),\omega_{\widehat{S}}\big)\rightarrow \Ext^d_{\widehat{S}}\big(\widehat{S}/(\fq,x_1),\omega_{\widehat{S}}\big)\rightarrow H^d_{(\fq,x_1)}(\omega_{\widehat{S}})$ is non-zero (we recall that $\omega_{\widehat{S}}=\omega_{S}\otimes_S\widehat{S}$). Thus in order to prove the equivalence of  (\ref{itm:CanonicalElementScalarMultiplication}) and  (\ref{itm:SomeMapFromExtToLocalCohomologyNonZero}), without loss of generality, we can assume that $S$ is complete.
  	   
  	   By \cite[page 534]{HochsterCanonical} (see also the proof of \cite[Theorem (4.3)]{HochsterCanonical}), there exists an $S$-homomorphism $\delta:H^d_{(\fq,x_1^w)}(\omega_S)\rightarrow E(S/\fm_S)$ which induces the functorial isomorphism 
  	   $$\rho_M:\Hom_S(M,\omega_S)\rightarrow H^d_{(\fq,x_1^w)}(M)^{\vee},\ \ f\mapsto \big(h\mapsto \delta\circ H^d_{(\fq,x_1^w)}(f)(h)\big).$$ Having the isomorphism  $\rho:=\rho_{\text{syz}^{G_\bullet}_d\big(S/(\fq,x_1^w)\big)}$,  we can construct the diagram
  	   \begin{center}
  	   	  $\begin{CD} 
  	   	    \Hom_S\Big(\text{syz}^{G_\bullet}_d\big(S/(\fq,x_1^w)\big),\omega_S\Big) @>\rho\ (\cong)>> 	H^d_{(\fq,x_1^w)}\Big(\text{syz}^{G_\bullet}_d\big(S/(\fq,x_1^w)\big)\Big)^\vee @>\gamma^\vee>> S^\vee @>\Xi\ (\cong)>> E(S/\fm_S)\\
  	   	    @VV\beta:\ f\mapsto \Ext^d_S(\id,f)(\epsilon_{(\fq,x_1^w)}) V @. @. @AA\delta A\\
  	   	    \Ext^d_S\big(S/(\fq,x_1^w),\omega_S\big) @>>\Ext^d_S(\nu_w,\id)> \Ext^d_S\big(S/(\fq,x_1),\omega_S\big) @>>\Theta^{\omega_S}_{(\fq,x_1)}> H^d_{(\fq,x_1)}(\omega_S) @>\cong>\Psi> H^d_{(\fq,x_1^w)}(\omega_S) 
  	   	  \end{CD}$
  	   \end{center}
     where $\gamma:S\rightarrow H^d_{(\fq,x_1^w)}\Big(\text{syz}^{G_\bullet}_d\big(S/(\fq,x_1^w)\big)\Big)$ is given by $1\rightarrow x_1^{w-1}\eta_{(\fq,x_1^w)}$, and  $\Xi:S^\vee\rightarrow E(S/\fm_S)$ is the evaluation map at $1$, thus  $x_1^{w-1}\eta_{(\fq,x_1^w)}\neq 0$ precisely when $\Xi\circ \gamma^\vee\circ \rho\neq 0$. Thus, if we can show that the diagram is commutative, then  from the non-vanishing of  $x_1^{w-1}\eta_{(\fq,x_1^w)}\neq 0$ we get the non-vanishing of $\Theta^{\omega_S}_{(\fq,x_1)}\circ \Ext^d_S(\nu_w,\id)$. The reverse implication also holds in view of Claim 2. Consequently, the proof is complete once  the commutativity of the diagram is shown. This  commutativity can be readily checked using Claim 1, so we leave it to the reader to check that the diagram is commutative.
     
     (\ref{itm:SomeMapFromExtToLocalCohomologyNonZero})$\Leftrightarrow $(\ref{itm:AnotherMapFromExtToLocalCohomologyNonZero}): Let, $\pi_1:S/(\fq,x_1^w)\rightarrow S/(\fq,x_1)$ be the natural epimorphism. We take into account the diagram
     \begin{center}
     	\xymatrix{
     		 \Ext^d_S\big(S/(\fq,x_1^w),\omega_S\big)
     		\ar[rr]^{\ext^d_S(\nu_w,\id)} 
     		\ar[rrd]_{x_1^{w-1}.\id}
     		&&\Ext^d_S\big(S/(\fq,x_1),\omega_S\big)
     		\ar[rr]^{\Theta^{\omega_S}_{(\fq,x_1)}}
     		\ar[d]^{\Ext^d_S(\pi_1,\id)}
     		&& H^d_{(\fq,x_1)}(\omega_S) 
     		\ar[d]^{\Psi}_{\cong}
     		\\
     		&&\Ext^d_S\big(S/(\fq,x_1^w),\omega_S\big) 
              \ar[rr]_{\Theta^{\omega_S}_{(\fq,x_1^w)}} 
     		&& H^d_{(\fq,x_1^w)}(\omega_S) 
     	}
     \end{center}     
    which is commutative in view of  Claim 1 in the proof of the previous implications. Then the statement is clear in view of the commutativity of this diagram as well as the bijectivity of $\Psi$.
  \end{proof}
\end{prop}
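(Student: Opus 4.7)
The engine of the proof is the isomorphism $\Psi_M:H^d_{(\fq,x_1)}(M)\to H^d_{(\fq,x_1^w)}(M)$ coming from the direct system maps $\ext^d_S(\pi_n,\id_M)$ (where $\pi_n:S/(\fq,x_1^w)^n\twoheadrightarrow S/(\fq,x_1)^n$), which is bijective since the two ideals have the same radical. I would first establish the identity
\[
\Psi_M\circ \Theta^M_{(\fq,x_1)}\circ \ext^d_S(\nu_w,\id_M)\;=\;x_1^{w-1}\cdot \Theta^M_{(\fq,x_1^w)},
\]
by factoring $\ext^d_S(\pi_1\circ \nu_w,\id_M)$ as $\ext^d_S$ applied to multiplication by $x_1^{w-1}$ on $S/(\fq,x_1^w)$ and using the functoriality of $\Theta^M$ in the first slot. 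This immediately yields (ii)$\Leftrightarrow$(iii) after invoking the bijectivity of $\Psi_{\omega_S}$.

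For (ii)$\Rightarrow$(i), I would argue as follows. Given a cocycle $\varphi:G_d\to \omega_S$ (where $G_\bullet$ resolves $S/(\fq,x_1^w)$) whose class witnesses the non-vanishing of (ii), it factors as $\overline{\varphi}:\syz_d^{G_\bullet}(S/(\fq,x_1^w))\to \omega_S$, and by definition of $\epsilon_{(\fq,x_1^w)}$ we have $\ext^d_S(S/(\fq,x_1^w),\overline{\varphi})(\epsilon_{(\fq,x_1^w)})=[\varphi]$. The naturality of $\Theta$ combined with the identity above forces $x_1^{w-1}\Theta^{\omega_S}_{(\fq,x_1^w)}([\varphi])=H^d_{(\fq,x_1^w)}(\overline{\varphi})(x_1^{w-1}\eta_{(\fq,x_1^w)})$, so non-vanishing of the LHS forces $x_1^{w-1}\eta_{(\fq,x_1^w)}\neq 0$.

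The reverse implication (i)$\Rightarrow$(ii) is the delicate part. I would reduce to the complete case by a faithful-flatness argument: both the vanishing of $x_1^{w-1}\eta_{(\fq,x_1^w)}$ and the vanishing of the composed map in (ii) are preserved and reflected under $-\otimes_S\widehat{S}$, since $\widehat{S}$ is $S$-flat, $\omega_S\otimes_S\widehat{S}\cong \omega_{\widehat{S}}$, and completion commutes with syzygies of finitely presented modules. Once $S$ is complete, I would invoke Matlis/local duality to produce a map $\delta:H^d_{(\fq,x_1^w)}(\omega_S)\to E(S/\fm_S)$ for which $\rho_M:\Hom_S(M,\omega_S)\overset{\cong}{\to} H^d_{(\fq,x_1^w)}(M)^\vee$ (given by $f\mapsto \delta\circ H^d_{(\fq,x_1^w)}(f)$) is a functorial isomorphism on finitely generated $M$. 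Applying $\rho$ to $M=\syz_d^{G_\bullet}(S/(\fq,x_1^w))$ and dualizing the evaluation of $\gamma:S\to H^d_{(\fq,x_1^w)}(\syz_d),\ 1\mapsto x_1^{w-1}\eta_{(\fq,x_1^w)}$, converts (i) into the assertion that $\Xi\circ \gamma^\vee\circ \rho\neq 0$ (with $\Xi:S^\vee\to E(S/\fm_S)$ the evaluation at $1$). I would then assemble a commutative square whose bottom row is the composition in (ii) followed by $\Psi$, whose top row is $\Xi\circ \gamma^\vee\circ \rho$, and whose left vertical is $f\mapsto \ext^d_S(\id,f)(\epsilon_{(\fq,x_1^w)})$; its commutativity is a diagram chase using Claim~$1$ and the definition of $\eta_{(\fq,x_1^w)}$.

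I expect the main obstacle to be verifying the commutativity of that final square, especially the compatibility between $\rho$ (built from local duality) and the assignment $f\mapsto \ext^d_S(\id,f)(\epsilon_{(\fq,x_1^w)})$; this is exactly the adjunction between $\ext$ and local cohomology duality that underlies Hochster's original argument in \cite[Theorem (4.3)]{HochsterCanonical}, and I would model this step directly on the construction in \cite[page 534]{HochsterCanonical}, adapting it from $\fm_S$ to the $\fm_S$-primary ideal $(\fq,x_1^w)$.
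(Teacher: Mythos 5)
Your proposal is correct and follows essentially the same route as the paper's proof: the same key identity $\Psi_M\circ\Theta^M_{(\fq,x_1)}\circ\ext^d_S(\nu_w,\id_M)=x_1^{w-1}\Theta^M_{(\fq,x_1^w)}$ (the paper's Claim 1), the same argument deducing (i) from (ii) via factoring a witnessing cocycle through $\overline{\varphi}$ (the paper's Claim 2), the same reduction to the complete case by faithful flatness, and the same local-duality construction of $\delta$, $\rho_M$, and the commutative diagram modeled on Hochster's Theorem (4.3) and page 534. Your identification of the final diagram's commutativity as the crux is also exactly where the paper's proof stops (leaving it as a routine check via Claim 1).
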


\begin{consi}\label{Consideration}
	\emph{Let $(S,\fm_S)$ be a local ring of dimension $d$. Throughout the rest of this section we will repeatedly take into account  the following data:}
\begin{itemize}
		 \item \emph{a system of parameters $\mathbf{x}:=x_1,\ldots,x_d$ of $S$.}
		 \item \emph{some height $d-1$ ideal $\fq$ containing $x_2,\ldots,x_d$ (but not $x_1$).}
		 \item \emph{some $w\in \mathbb{N}$.}
		 \item \emph{a (not necessarily minimal) free resolution $G_\bullet$ of $S/(\fq,x_1^w)$.}
	\end{itemize}
\end{consi}

\begin{prop-defi} \label{PropositionVarientCET} (A variant of the Canonical Element Theorem) Let $(S,\fm_S)$ be   a local ring of dimension $d$.   The following statements are equivalent.
	\begin{enumerate}[(i)]
    	  \item \label{itm:VariantCEPropertyUnifiedSOP}
		  	For any choice of $\mathbf{x},\fq,w,G_\bullet$ as in  Consideration \ref{Consideration},
		   any chain map
		   $$\phi_\bullet:K_\bullet(x_1^w,x_2,\ldots,x_{d};S)\rightarrow G_\bullet\ \ \text{s.t.\ \ } \overline{\phi_0}\big(r+(x_1^w,x_2,\ldots,x_d)\big)=r+(\fq,x_1^w)$$  and any $v\in\mathbb{N}$,
		  \begin{align*}
		    (x_1^{wv-w}x_2^{v-1}\cdots x_{d}^{v-1}).\big(x_1^{w-1}\phi_{d}(1)\big)+\im&(\partial^{G_\bullet}_{d+1})&\\&\notin (x_1^{wv},x_2^v,\ldots,x_{d}^v)\text{syz}^{G_\bullet}_{d}\big(S/(\fq,x_1^w)\big).
		  \end{align*}		
		  \item \label{itm:VariantCEProperty} Statement (\ref{itm:VariantCEPropertyUnifiedSOP}) holds when $v=1$ is fixed, i.e. 
		  $$x_1^{w-1}\phi_{d}(1)+\im(\partial^{G_\bullet}_{d+1})\notin (x_1^{w},x_2,\ldots,x_{d})\text{syz}^{G_\bullet}_{d}\big(S/(\fq,x_1^w)\big). $$
	      \item \label{itm:Variant0thHomologyMapTheMultiplicationMap} 
	       For any $\mathbf{x},\fq,w,G_\bullet$ as in Consideration \ref{Consideration}, and	 for any    chain map 
	      $$\phi_\bullet:K_\bullet(x_1^w,x_2,\ldots,x_{d};S)\rightarrow G_\bullet\ \ \text{s.t.\ \ }  \overline{\phi_0}\big(r+(x_1^w,x_2,\ldots,x_{d})\big)=x_1^{w-1}.r+(\fq,x_1^w)$$ we have,	     
	      $\phi_{d}\neq 0$.	
	      \item 	\label{itm:VariantCET}		For any  $\mathbf{x},\fq,w$ as in Consideration \ref{Consideration},
	      $$x_1^{w-1}\eta_{(\fq,x_1^w)}\neq 0.$$				
	      \item \label{itm:ExtInterpretation} (Assuming that $S$ admits a canonical module) For any $\mathbf{x},\fq,w$ as in Consideration \ref{Consideration}, the composited map $$\Ext^d_S\big(S/(\fq,x_1^w),\omega_S\big)\overset{\Ext^d_S(\nu_w,\id_{\omega_S})}{\longrightarrow} \Ext^d_S\big(S/(\fq,x_1),\omega_S\big)\overset{\Theta^{\omega_S}_{(\fq,x_1)}}{\longrightarrow} H^d_{(\fq,x_1)}(\omega_S)$$
	      is non-zero, where $\nu_w:S/(\fq,x_1)\rightarrow S/(\fq,x_1^w)$ is the multiplication map by $x_1^{w-1}$ and $\Theta^{\omega_S}_{(\fq,x_1)}$ is the natural map to the local cohomology.
	      \item \label{itm:ExtSecondInterpretation} (Assuming that $S$ admits a canonical module) For any $\mathbf{x},\fq,w$ as in Consideration \ref{Consideration}, the composited map
	      $$\Ext^d_S\big(S/(\fq,x_1^w),\omega_S\big) \overset{\Theta^{\omega_S}_{(\fq,x_1^w)}}{\longrightarrow} H^d_{(\fq,x_1^w)}(\omega_S)\overset{x_1^{w-1}\id}{\longrightarrow}  H^d_{(\fq,x_1^w)}(\omega_S)$$ is non-zero.
	      \item\label{itm:Definition} The local ring $S$ satisfies the Strong Canonical Element Property.
     \end{enumerate}	
	 \begin{proof}      
	   
	   (\ref{itm:VariantCEPropertyUnifiedSOP})$\Leftrightarrow$(\ref{itm:VariantCEProperty}):	 Only one side needs a proof. The first guess would be that this implication follows by the same argument as in \cite[Remark  (2.2)(7)]{HochsterCanonical}. But some   more work is required in this situation. Because if we change our under consideration system of parameters from $x_1^w,x_2,\ldots,x_d$ to $x_1^{wv},x_2^v,\ldots,x_d^v$,  then we have to also consider $\text{syz}_d^{}\big(S/(\fq,x_1^{vw})\big)$ in place of  $\text{syz}_{d}\big(S/(\fq,x_1^w)\big)$. 
	     
	      Arguing by contradiction, we consider a chain  map $\phi_\bullet$ as in   (\ref{itm:VariantCEPropertyUnifiedSOP}) such that 	
	      \begin{align}
	      \label{EquationOneImpossibleRelation}
	      (x_1^{wv-w}x_2^{v-1}\cdots x_{d}^{v-1}).\big(x_1^{w-1}\phi_{d}(1)\big)+\im&(\partial^{G_\bullet}_{d+1})\in (x_1^{vw},x_2^v,\ldots,x_{d}^v)\text{syz}^{G_\bullet}_{d}\big(S/(\fq,x_1^w)\big)
	      \end{align}		
	      for some $v\ge 2$ (if $v=1$ then we  already get  a contradiction).   Without loss of generality, we can suppose that $\fq$ is a prime ideal. 
	      Namely, since $\fq$ has height $d-1$ and $\mathbf{x}$ is a system of parameters, so there exists a minimal prime $\fp$ of $\fq$ such that $x_1\notin \fp$. Then a composition of $\phi_\bullet$ with a chain map from the free resolution of $S/(\fq,x_1^w)$ to a free resolution of $S/(\fp,x_1^w)$ that lifts the natural epimorphism on the $0$-th homologies  yields a similar equation as in (\ref{EquationOneImpossibleRelation})   but in the $d$-th syzygy of $S/(\fp,x_1^w)$. Henceforth, until the end of proof of this implication we assume that $\fq$ is a prime ideal.
	      
	      Taking into account the chain map  $_{v}\eta_\bullet:K_\bullet(x_1^{wv},x_2^v,\ldots,x_d^v;S)\rightarrow K_\bullet(x_1^w,x_2,\ldots,x_d;S)$ as in Lemma \ref{LemmaInducedMapOnKoszulHomolgoies}(\ref{itm:PowerOfX}) and setting  $$\psi_\bullet:=\phi_\bullet\circ{_{v}\eta_\bullet}:K_\bullet(x_1^{wv},x_2^v,\ldots,x_d^v;S)\rightarrow G_\bullet$$  we get 
	      \begin{equation}
	        \label{EquationFirstBadRelationForWv}
	        x_1^{w-1}\psi_d(1)+\im(\partial^{G_\bullet}_{d+1})\in (x_1^{wv},x_2^v,\ldots,x_{d}^v)\text{syz}^{G_\bullet}_{d}\big(S/(\fq,x_1^w)\big).
	      \end{equation}   
	      
	      Next, we consider the minimal free resolution ${_wG_\bullet}$ (respectively, ${_{wv}G_\bullet}$) of $S/(\fq,x_1^w)$ (respectively, $S/(\fq,x_1^{wv})$) as in Remark \ref{RemarkMappingConeFreeResolution} as well as the chain maps $$K_\bullet(x_1^{wv},x_2^v,\ldots,x_d^v;S)\overset{\gamma_\bullet}{\longrightarrow} {_{wv}G_\bullet}\overset{{_{wv,w}\lambda_\bullet}}{\longrightarrow} {_{w}G_\bullet}$$
	      where $\gamma_\bullet$ is induced by the natural epimorphism on $0$-th homologies and ${_{wv,w}\lambda_{\bullet}}$ is as in Lemma \ref{LemmaSysyzgiesRelations}(\ref{itm:SyzygiesMapsBetweenThem}). Therefore, as ${_{wv,w}\lambda_\bullet}=\id_S$, so from Lemma \ref{LemmaBasicFactInHomologicalAlgebra}(\ref{itm:RelationInSyz}) and display (\ref{EquationFirstBadRelationForWv}) we get  
	      \begin{equation}
	        \label{EquationSecondBadRelationForWv}
	        x_1^{w-1}.({_{wv,w}\lambda_d}\circ \gamma_d)(1)+\im(\partial^{{_wG_\bullet}}_{d+1})\in(x_1^{wv},x_2^v,\ldots,x_d^v)\text{syz}_{d}^{{_wG_\bullet}}\big(S/(\fq,x_1^w)\big)
	      \end{equation}
	       (because by mapping $G_\bullet$ to ${_wG_\bullet}$ appropriately, we can presume that the target of $\psi_\bullet$ is ${_wG_\bullet}$).  
	      
	      Next, in view of Lemma \ref{LemmaSysyzgiesRelations},  we  take into account ${_{w,wv}\lambda_\bullet}:{_{w}G_\bullet}\rightarrow {_{wv}G_\bullet}$ which has the property ${_{w,wv}\lambda_\bullet}\circ {_{wv,w}\lambda_{\bullet}}=x_1^{w(v-1)}\id_{{_{wv}G_\bullet}}$, so from display (\ref{EquationSecondBadRelationForWv})  and considering ${_{w,wv}\lambda_{\bullet}}\circ ({_{wv,w}\lambda_{\bullet}}\circ \gamma_{\bullet})$ in conjunction with $\ {_{w,wv}\lambda_{d}}\circ {_{wv,w}\lambda_{d}}=x_1^{w(v-1)}\id_{{_{wv}G_d}}$   we get
	      \begin{equation}
	         x_1^{wv-1}.\gamma_d(1)+\im(\partial^{{_{wv}G_\bullet}}_{d+1})\in(x_1^{wv},x_2^v,\ldots,x_d^v)\text{syz}_{d}^{{_{wv}G_\bullet}}\big(S/(\fq,x_1^{wv})\big).
	      \end{equation}
       This contradicts  (\ref{itm:VariantCEProperty}).
	 	
       (\ref{itm:VariantCEPropertyUnifiedSOP})$\Leftrightarrow$(\ref{itm:VariantCET}):  This is immediate in  light of \cite[Theorem (3.12)]{HochsterCanonical}.	  	 
       
       (\ref{itm:VariantCEProperty})$\Rightarrow$(\ref{itm:Variant0thHomologyMapTheMultiplicationMap}): Pick some chain map  $\psi_\bullet:K_\bullet(x_1^w,x_2,\ldots,x_d)\rightarrow G_\bullet$   lifting the natural epimorphism on the $0$-th homologies. If  there is another chain map $\phi_\bullet:K_\bullet(x_1^w,x_2,\ldots,x_d)\rightarrow G_\bullet$ as in the statement of (\ref{itm:Variant0thHomologyMapTheMultiplicationMap})  such that $\phi_d=0$, then since $x_1^{w-1}\psi_\bullet$ and $\phi_\bullet$ both induce the same map on the $0$-th homologies, so from Lemma \ref{LemmaBasicFactInHomologicalAlgebra}(\ref{itm:RelationInSyz}) and our hypothesis we get $$x_1^{w-1}\psi_d(1)+\im(\partial^{G_\bullet}_{d+1})\in (x_1^w,x_2,\ldots,x_d)\text{syz}^{G_\bullet}_d\big(S/(\fq,x_1^w)\big)$$ which is a contradiction.
       
       (\ref{itm:Variant0thHomologyMapTheMultiplicationMap})$\Rightarrow$(\ref{itm:VariantCEProperty}):  Suppose, to the contrary  that,  for some 
        chain map $\phi_\bullet:K_\bullet(x_1^w,x_2,\ldots,x_d)\rightarrow G_\bullet$ as in the statement of (\ref{itm:VariantCEProperty}), we have $$x_1^{w-1}.\phi_{d}(1)+\im(\partial^{G_\bullet}_{d+1})\in (x_1^{w},x_2,\ldots,x_{d})\text{syz}^{G_\bullet}_d\big(S/(\fq,x_1^w)\big).$$ 		
       
       Setting $\psi_\bullet:=x_1^{w-1}\phi_\bullet$,		 
       it is a chain map such that $\psi_0\big(r+(x_1^w,x_2,\ldots,x_d)\big)=\big(rx_1^{w-1}+(\fq,x_1^w)\big)$ on the $0$-th homologies  and such that  $\psi_{d}(1)+\im(\partial^{G_\bullet}_{d+1})\in (x_1^w,x_2,\ldots,x_d)\text{syz}_{d}^{G_\bullet}\big(S/(\fq,x_1^w)\big)$, say $$\psi_d(1)=x_1^w\alpha_1+\sum\limits_{i=2}^d\alpha_ix_i+\partial_{d+1}^{G_\bullet}(\beta),\ \ \alpha_1,\ldots,\alpha_d\in G_d,\ \beta\in G_{d+1}.$$
       Then, we turn $\psi_{\bullet}$ into another chain map $\psi'_\bullet:K_\bullet(x_1^w,x_2,\ldots,x_d)\rightarrow G_\bullet$ as follows. We set $\psi'_i=\psi_i$ for each $0\le i\le d-2$  and  $\psi'_{d}=0$, while $\psi'_{d-1}$ is defined as $$\psi'_{d-1}(e_1\wedge\cdots\wedge \widehat{e_{i}}\wedge\cdots \wedge e_d)=\psi_{d-1}(e_1\wedge\cdots\wedge \widehat{e_i}\wedge \cdots \wedge e_d)-(-1)^{i+1}\partial^{G_\bullet}_{d}(\alpha_i).$$ Then  $\psi'$ satisfies all of the conditions of (\ref{itm:Variant0thHomologyMapTheMultiplicationMap}), but the last non-vanishing condition of it. This is a contradiction.		        	  
       
       (\ref{itm:VariantCET})$\Leftrightarrow$ (\ref{itm:ExtInterpretation})$\Leftrightarrow$ (\ref{itm:ExtSecondInterpretation}):		     Without loss of generality, we can assume that $R$ is complete thus it is a homomorphic image of a Gorenstein ring. Then, these implications hold in view of Proposition \ref{PropositionCanonicalElementTimesParameter}.
	 \end{proof}
\end{prop-defi}

\begin{rem}\emph{If in the statement of  any part of the previous proposition we set $w=1$,  then  it follows from  the original Canonical Element Theorem.}
\end{rem}

\begin{rem}
	\emph{
		 At the time of writing this paper the author does not know any variant of the Improved New Intersection Theorem which is equivalent to the Strong Canonical Element Property.
	}
\end{rem}

The first part of the following corollary shows  how $\eta_{(\fq,x_1)}$ and $x_1^{w-1}\eta_{(\fq,x_1^{w})}$ are connected. The second part of the corollary can perhaps be useful to connect the non-vanishing of $x_1^{w-1}\eta_{(\fq,x_1^{w})}$ (for some $w$) to the formal local cohomology?
 The syzygies in the statement and the proof of the next corollary are computed with respect to the resolutions as in Remark \ref{RemarkMappingConeFreeResolution} and Lemma \ref{LemmaSysyzgiesRelations}.

\begin{cor}\label{CorollaryVariantCETCorollary}
	Let $\mathbf{x},\fq,w$ be as in Consideration \ref{Consideration}  and assume that $\fq$ is a prime ideal.
	\begin{enumerate}[(i)]
		\item \label{itm:MappingCEtoVCE} There are  $S$-homomorphisms $$\Ht^{d}_{(\fq,x_1^w)}\Big(\text{syz}_{d}\big(S/(\fq,x_1^w)\big)\Big)\overset{\mu_{w,1}}{\rightarrow} \Ht^{d}_{(\fq,x_1)}\Big(\text{syz}_{d}\big(S/(\fq,x_1)\big)\Big)\overset{\mu_{1,w}}{\rightarrow} \Ht^{d}_{(\fq,x_1^w)}\Big(\text{syz}_{d}\big(S/(\fq,x_1^w)\big)\Big)$$
		such that 
		$$\mu_{w,1}(\eta_{(\fq,x_1^w)})=\eta_{(\fq,x_1)},\ \ \ \mu_{1,w}\circ \mu_{w,1}=x_1^{w-1}\id.$$ 
		In particular, $$\mu_{1,w}(\eta_{(\fq,x_1)})=x_1^{w-1}\eta_{(\fq,x_1^w)}.$$
		\item \label{itm:CriterionVCENonzero} Suppose  that $x_1$ is a regular element of $S$. Let $$\delta_{\mathbf{x},\fq,w}:H^{d-1}_{(\fq,x_1)}\big(\text{syz}_{d}(S/\fq)/x_1^{w-1}\text{syz}_{d}(S/\fq)\big)\rightarrow H^{d}_{(\fq,x_1)}\Big(\text{syz}_{d}\big(S/(\fq,x_1)\big)\Big)$$ be the connecting homomorphism induced by the exact sequence in  Lemma \ref{LemmaSysyzgiesRelations}(\ref{itm:Syzygy1toSyzygyW}). A necessary and sufficient condition for the validity of   Proposition-Definition \ref{PropositionVarientCET}(\ref{itm:VariantCET})  for this  choice of $\mathbf{x},\fq,w$,   is that $$\eta_{(\fq,x_1)}\notin \text{im}(\delta_{\mathbf{x},\fq,w}).$$
	\end{enumerate}
	\begin{proof}
		Let $\zeta:S/(\fq,x_1^w)\rightarrow S/(\fq,x_1)$ be the natural epimorphism. 
		
		(\ref{itm:MappingCEtoVCE}) Following the notation of Lemma \ref{LemmaSysyzgiesRelations} we consider the commutative diagram,
		\vspace{2mm}
		\begin{center}
	   \xymatrix{
		  \ext^d_S\big(\frac{S}{(\fq,x_1^w)},\text{syz}_d(\frac{S}{(\fq,x_1^w)})\big) 
		  \ar[r]^{\ext^d_S(\id,f_{w,1})} \ar[dd]_{\Theta_{(\fq,x_1^w)}^{\text{syz}_d\big(S/(\fq,x_1^w)\big)}}
		   &\ext^d_S\big(\frac{S}{(\fq,x_1^w)},\text{syz}_d(\frac{S}{(\fq,x_1)})\big) 
     	   \ar[dd]_{\Theta_{(\fq,x_1^w)}^{\text{syz}_d\big(S/(\fq,x_1)\big)}}
		   & \ext^d_S\big(\frac{S}{(\fq,x_1)},\text{syz}_d(\frac{S}{(\fq,x_1)})\big) 
		   \ar[l]_{\ext^d_S(\zeta,\id)}
		   \ar[dd]^{\Theta_{(\fq,x_1)}^{\text{syz}_d\big(S/(\fq,x_1)\big)}}
		   \\\\
		  \lim\limits_{\underset{m\in \mathbb{N}}{\longrightarrow}} \ext^d_S\big(\frac{S}{(\fq,x_1^w)^m},\text{syz}_d(\frac{S}{(\fq,x_1^w)})\big) 
		  \ar[r]_{H^*}
		  &\lim\limits_{\underset{m\in \mathbb{N}}{\longrightarrow}} \ext^d_S\big(\frac{S}{(\fq,x_1^w)^m},\text{syz}_d(\frac{S}{(\fq,x_1)})\big) 
		  & \lim\limits_{\underset{m\in   \mathbb{N}}{\longrightarrow}} \ext^d_S\big(\frac{S}{(\fq,x_1)^m},\text{syz}_d(\frac{S}{(\fq,x_1)})\big) \ar[l]^{\Psi_1}
	    }
		\end{center}

		\vspace{1mm}
		where  $H^*=H^d_{(\fq,x_1^w)}(f_{w,1})$ and it is well-known that the natural induced map $\Psi_1$ on the limit of Ext modules is an isomorphism. We define $$\mu_{w,1}:=\Psi_1^{-1}\circ H^d_{(\fq,x_1^w)}(f_{w,1}).$$
		In view of the definition of  the element $\epsilon_{I}$ assigned to ideal $I$ and the induced maps on Ext-modules, it is readily verified that $\ext^d_S(\zeta,\id)(\epsilon_{(\fq,x_1)})=\ext^d_S(\id,f_{w,1})(\epsilon_{(\fq,x_1^w)})$, so the above commutative diagram shows that $\mu_{w,1}(\eta_{(\fq,x_1^w)})=\eta_{(\fq,x_1)}$.  Considering the induced map $$\Psi_w:\lim\limits_{\underset{m\in \mathbb{N}}{\longrightarrow}}\ext^d_S\Big(S/(\fq,x_1)^m,\text{syz}_d\big(S/(\fq,x_1^w)\big)\Big)\rightarrow \lim\limits_{\underset{m\in \mathbb{N}}{\longrightarrow}}\ext^d_S\Big(S/(\fq,x_1^w)^m,\text{syz}_d\big(S/(\fq,x_1^w)\big)\Big)$$ by $\zeta$ on direct limits of Ext modules, we set $$\mu_{1,w}:=\Psi_w\circ H^d_{(\fq,x_1)}(f_{1,w}):\lim\limits_{\underset{m\in \mathbb{N}}{\longrightarrow}}\ext^d_S\Big(S/(\fq,x_1)^m,\text{syz}_d\big(S/(\fq,x_1)\big)\Big)\rightarrow \lim\limits_{\underset{m\in \mathbb{N}}{\longrightarrow}}\ext^d_S\Big(S/(\fq,x_1^w)^m,\text{syz}_d\big(S/(\fq,x_1^w)\big)\Big).$$
		Finally from the commutative diagram,
		$$
		\begin{CD}
		\lim\limits_{\underset{m\in \mathbb{N}}{\longrightarrow}}\ext^d_S\Big(S/(\fq,x_1)^m,\text{syz}_d\big(S/(\fq,x_1)\big)\Big)
		@>\Psi_1>>
		\lim\limits_{\underset{m\in \mathbb{N}}{\longrightarrow}}\ext^d_S\Big(S/(\fq,x_1^w)^m,\text{syz}_d\big(S/(\fq,x_1)\big)\Big)
		\\
		@VH^d_{(\fq,x_1)}(f_{1,w})VV @VVH^d_{(\fq,x_1^w)}(f_{1,w})V\\
		\lim\limits_{\underset{m\in \mathbb{N}}{\longrightarrow}}\ext^d_S\Big(S/(\fq,x_1)^m,\text{syz}_d\big(S/(\fq,x_1^w)\big)\Big)
		@>>\Psi_w>
		\lim\limits_{\underset{m\in \mathbb{N}}{\longrightarrow}}\ext^d_S\Big(S/(\fq,x_1^w)^m,\text{syz}_d\big(S/(\fq,x_1^w)\big)\Big)
		\end{CD}
		$$
		we get
		\begin{center}
			\begin{align*}
			\mu_{1,w}\circ \mu_{w,1}
			=\Psi_w\circ H^d_{(\fq,x_1)}(f_{1,w}) \circ  \Psi_1^{-1}\circ H^d_{(\fq,x_1^w)}(f_{w,1})
			&=H^d_{(\fq,x_1^w)}(f_{1,w})\circ \Psi_1\circ  \Psi_1^{-1}\circ H^d_{(\fq,x_1^w)}(f_{w,1})
			&\\&=H^d_{(\fq,x_1^w)}(f_{1,w}\circ f_{w,1}) 
			&\\&=H^d_{(\fq,x_1^w)}\Big(x_1^{w-1}\id_{\text{syz}_d\big(S/(\fq,x_1^w)\big)}\Big) && \text{(by Lemma \ref{LemmaSysyzgiesRelations})}
			&\\&=x_1^{w-1}\id_{H^d_{(\fq,x_1^w)}\Big(\text{syz}_d\big(S/(\fq,x_1^w)\big)\Big)}.		 
			\end{align*}		 
		\end{center}
		
		(\ref{itm:CriterionVCENonzero})  By the  previous part,  $x_1^{w-1}\eta_{(\fq,x_1^w)}\neq 0$ if and only if $\mu_{1,w}(\eta_{(\fq,x_1)})\neq 0$ and by definition of $\mu_{1,w}$ this holds precisely when $H^d_{(\fq,x_1)}(f_{1,w})(\eta_{(\fq,x_1)})\neq 0$ (as $\Psi_w$ is an isomorphism). Thus,   the long exact sequence  obtained from  the exact sequence of Lemma \ref{LemmaSysyzgiesRelations}(\ref{itm:Syzygy1toSyzygyW}) and  $\Gamma_{(\fq,x_1)}(-)$  yields the statement.		
	\end{proof}
\end{cor}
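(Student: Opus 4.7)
The strategy is to construct $\mu_{w,1}$ and $\mu_{1,w}$ as compositions of two natural operations on local cohomology: (a) functoriality in the coefficient module applied to the chain-level maps $f_{w,1}$ and $f_{1,w}$ from Lemma \ref{LemmaSysyzgiesRelations}(\ref{itm:SyzygiesMapsBetweenThem}), and (b) the canonical isomorphism between local cohomology supported at $(\fq,x_1)$ and at $(\fq,x_1^w)$ (both ideals being $\fm_S$-primary, their power filtrations are cofinal). Writing $\Theta^M_I$ as the direct limit over $m$ of $\Ext^d_S(S/I^m, M)$, step (b) corresponds to an isomorphism $\Psi_1$ between the two direct limits induced by the cofinal inclusion of power filtrations. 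The plan is to set $\mu_{w,1} := \Psi_1^{-1} \circ H^d_{(\fq,x_1^w)}(f_{w,1})$ and $\mu_{1,w} := \Psi_w \circ H^d_{(\fq,x_1)}(f_{1,w})$, where $\Psi_w$ is the analogous cofinality isomorphism corresponding to the other coefficient module.

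The identity $\mu_{w,1}(\eta_{(\fq,x_1^w)}) = \eta_{(\fq,x_1)}$ should reduce to the Ext-level compatibility $\Ext^d_S(\zeta,\id)(\epsilon_{(\fq,x_1)}) = \Ext^d_S(\id, f_{w,1})(\epsilon_{(\fq,x_1^w)})$, where $\zeta \colon S/(\fq,x_1^w) \twoheadrightarrow S/(\fq,x_1)$ is the natural surjection. This compatibility follows directly from the explicit mapping-cone description of the minimal free resolutions in Remark \ref{RemarkMappingConeFreeResolution} combined with the normalization $_{w,1}\lambda_0 = \id_S$ in Lemma \ref{LemmaSysyzgiesRelations}(\ref{itm:SyzygiesMapsBetweenThem}). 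The composition identity $\mu_{1,w} \circ \mu_{w,1} = x_1^{w-1}\id$ then reduces, by functoriality of local cohomology and the commutativity of $\Psi_1, \Psi_w$ with the Ext-functoriality, to the chain-level relation $f_{1,w} \circ f_{w,1} = x_1^{w-1}\id$ recorded in the same lemma. Applying $\mu_{1,w}$ to the first identity immediately yields the concluding equality $\mu_{1,w}(\eta_{(\fq,x_1)}) = x_1^{w-1}\eta_{(\fq,x_1^w)}$.

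\textbf{Plan for Part (ii).} By Part (i), $x_1^{w-1}\eta_{(\fq,x_1^w)} = \mu_{1,w}(\eta_{(\fq,x_1)})$, and the bijectivity of $\Psi_w$ implies that this element vanishes if and only if $H^d_{(\fq,x_1)}(f_{1,w})(\eta_{(\fq,x_1)}) = 0$. Applying the functor $H^*_{(\fq,x_1)}(-)$ to the short exact sequence from Lemma \ref{LemmaSysyzgiesRelations}(\ref{itm:Syzygy1toSyzygyW}) (which uses regularity of $x_1$) yields a long exact sequence in which $\ker\!\bigl(H^d_{(\fq,x_1)}(f_{1,w})\bigr) = \im(\delta)$. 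Combining this with the equivalence of Proposition \ref{PropositionVarientCET}(\ref{itm:VariantCET}), the non-vanishing $x_1^{w-1}\eta_{(\fq,x_1^w)} \neq 0$ is equivalent to $\eta_{(\fq,x_1)} \notin \im(\delta)$, as desired.

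\textbf{Main obstacle.} The most delicate point is verifying the compatibility of $\epsilon_{(\fq,x_1)}$ and $\epsilon_{(\fq,x_1^w)}$ under $\zeta$ and $f_{w,1}$, which requires careful bookkeeping with the mapping cone resolutions and the explicit block-matrix form of $_{w,1}\lambda_\bullet$ in top degree. A secondary concern is ensuring that the cofinality isomorphisms $\Psi_1$ and $\Psi_w$ intertwine with the Ext-functoriality maps in a single commutative diagram, so that the composition identity in Part (i) cleanly reduces to the chain-level identity $f_{1,w} \circ f_{w,1} = x_1^{w-1}\id$ without extraneous sign or shift corrections.
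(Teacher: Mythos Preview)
Your proposal is correct and follows essentially the same route as the paper: you define $\mu_{w,1}:=\Psi_1^{-1}\circ H^d_{(\fq,x_1^w)}(f_{w,1})$ and $\mu_{1,w}:=\Psi_w\circ H^d_{(\fq,x_1)}(f_{1,w})$, reduce $\mu_{w,1}(\eta_{(\fq,x_1^w)})=\eta_{(\fq,x_1)}$ to the Ext-level identity $\Ext^d_S(\zeta,\id)(\epsilon_{(\fq,x_1)})=\Ext^d_S(\id,f_{w,1})(\epsilon_{(\fq,x_1^w)})$, reduce the composition to $f_{1,w}\circ f_{w,1}=x_1^{w-1}\id$ from Lemma~\ref{LemmaSysyzgiesRelations}(\ref{itm:SyzygiesMapsBetweenThem}) via the commutativity of the cofinality isomorphisms with functoriality, and handle Part~(ii) by the long exact sequence in local cohomology arising from Lemma~\ref{LemmaSysyzgiesRelations}(\ref{itm:Syzygy1toSyzygyW}). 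The paper proceeds exactly this way, with the same diagrams and the same key identities.
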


\subsection{The validity of our variant of the Canonical Element Theorem}
In the remainder of this section, we first show that if $S$ has a balanced big Cohen-Macaulay module satisfying two additional properties, then $S$ has the Strong Canonical Element Property (Lemma \ref{LemmaDesiredBalancedBigCMImpliesVariantOFCEC}).
 Thereafter,  we will show that  a balanced big Cohen-Macaulay module with our desired additional properties exists    over any normal excellent local domain and so demonstrate that the Strong Canonical Element Property holds in general (Corollary \ref{CorollaryVariantHolds}). 
 We begin with a preparatory remark, definition, convention  and lemma.

The following remark will be referred to later.

\begin{rem}\label{RemarkVariantUnderLocalHomomorphism}
	\emph{Suppose that $\varphi:(S,\fm_S)\rightarrow (S',\fm_{S'})$ is a local ring homomorphism such that the image, $\varphi(\mathbf{x})$, of a system of parameters $\mathbf{x}:=x_1,\ldots,x_d$ of $S$ forms a system of parameters for $S'$. Let $\fq$ (respectively, $\fq'$) be a height $d-1$ ideal of $S$ (respectively,   of $S'$) such that $x_2,\ldots,x_d\in \fq$ (respectively, $\varphi(x_2),\ldots,\varphi(x_d)\in \fq'$). Assume that $\fq S'\subseteq \fq'$. If the conclusion  of any of  propositions \ref{PropositionVarientCET}(\ref{itm:VariantCEPropertyUnifiedSOP}), (\ref{itm:VariantCEProperty})  or (\ref{itm:Variant0thHomologyMapTheMultiplicationMap})  holds in $S'$ for this $\varphi(\mathbf{x}),\fq',w$ and any chain map as in the statement, then it   also holds    for $S$ for $\mathbf{x},\fq,w$ and any chain map as in the statement. Because, if otherwise, then by tensoring a counterexample into $S'$ and then mapping  $G_\bullet\otimes_SS'$ to a free resolution of $S'/\big(\fq',\varphi(x_1^w)\big)$ we get a contradiction in $S'$ for the triple $\varphi{(\mathbf{x})},\fq',w$.}
\end{rem}

 \begin{defi}\label{DefinitionGradedKoszul}(\cite[Definition 9.1.1]{BrunsHerzogCohenMacaulay})
 	\emph{
 		Let $S$ be a ring (possibly non-Noetherian), $I$  an ideal generated by $\mathbf{x}:=x_1,\ldots,x_n$ and $M$ be an $R$-module. If all Koszul homologies $H_i(\mathbf{x},M)$ vanish, then we set $\text{grade}(I,M)=\infty$; otherwise, $\text{grade}(I,M)=n-v$ where $v=\sup\{i:H_i(\mathbf{x},M)\neq 0\}$. 
 	} 
 \end{defi}

 \begin{conv}
 	\emph{
 		In what follows when no specific generating set for $I$ is determined, by abuse of notation, we let $H_i(I,M)$  denote the $i$-th Koszul homology of $M$ with respect to any generating set of $I$. For our purpose here, which is the realization of grade in terms of Koszul homologies, this convention causes no confusion (see the paragraph after \cite[Definition 9.1.1]{BrunsHerzogCohenMacaulay}).}
 \end{conv}

See Remark \ref{RemarkImposedConditionsForLemmaNonVanishingOfLocalCohomologyAtGrade} for some comments on the imposed  conditions in the statement of Lemma \ref{LemmaInjectivityOfMapTOLocalCohomologyAtGrade}(\ref{itm:ConditionSufficientForLocalCohomologyNonZeroAtGrade}).
 
 \begin{lem}\label{LemmaInjectivityOfMapTOLocalCohomologyAtGrade} Let $(S,\fm_S)$ be a local ring, $\fa$ be an  ideal of $S$ generated by $n$ elements and $M$ be an $S$-module. 
 	\begin{enumerate}[(i)]
 	\item	\label{itm:LocalCohomologiesBelowTheGradeAllVanish}
        	 $H^i_{\fa}(M)=0$ for each $0\le i< \text{grade}(\fa,M)$.
    \item \label{itm:ConditionSufficientForLocalCohomologyNonZeroAtGrade}    	         	  Suppose that $M$ is a balanced big Cohen-Macaulay  $S$-module and that  $\grade_{}(\fa,M)=g=\text{height}(\fa)$. Then  the  map $$H_{n-g}(\fa,M)\rightarrow H^g_{\fa}(M)$$ (as in \cite[Theorem 5.2.9]{BrodmannSharpLocal})  is injective. In particular, $$H^g_{\fa}(M)\neq 0.$$
 	\end{enumerate}
 	\begin{proof}
 	 (\ref{itm:LocalCohomologiesBelowTheGradeAllVanish})	
 	  First from \cite[Proposition 9.1.3(b)]{BrunsHerzogCohenMacaulay}  we get $\text{grade}(\fa^s,M)=\text{grade}(\fa,M)$ for each $s\in \mathbb{N}$, therefore $\Ext^{i}_{S}(S/\fa^s,M)=0$ for each $0\le i<\text{grade}(\fa,M)$ and each $s\in \mathbb{N}$ by \cite[Exercise 9.1.10(d)]{BrunsHerzogCohenMacaulay}. Consequently, 
 	 $$\forall\ 0\le i<\text{grade}(\fa,M),\ \ \ H^i_{\fa}(M)=0\ \ \ (\text{\cite[Theorem 1.3.8]{BrodmannSharpLocal}}).$$
 		
 	 (\ref{itm:ConditionSufficientForLocalCohomologyNonZeroAtGrade})	We use induction on $g$. If $g=0$ then this map is the inclusion $0:_M\fa \rightarrow \Gamma_{\fa}(M)$ and the statement holds. Suppose that $g>0$ and the statement holds for $g-1$.
 		
 		Since $\text{height}(\fa)>0$, so $\fa$ contains some $x\in \fa$ such that $x\in S^\circ=S\backslash \cup_{\fp\in \text{min}(S)}\fp$, in particular $x$ forms a parameter element in $S$ and in any localization   $S_{\fp'}$ of $S$.
 		Thus 
 		\begin{equation}
 		  \label{EquationHeightIsReducedByOne}
 		  \text{height}_{S/xS}(\fa/xS)=\text{height}(\fa)-1=g-1.
 		\end{equation}
 		Moreover,  		
 since $M$ is assumed to  be a balanced big Cohen-Macaulay module, so 		$x$  is   regular over $M$, 
 		 $M/xM$ is a balanced big Cohen-Macaulay $S/xS$-module (\cite[Lemma (2.3)]{SharpCohenMacaulayProperties}) and finally 
 		 \begin{equation}
 		   \label{EquationGradeAndHieghtAndSpecialization}
 		   \text{grade}(\fa/xS,M/xM)\overset{\text{\cite[Proposition 9.1.2(b)]{BrunsHerzogCohenMacaulay}}}{=}g-1\overset{\text{Eq.  (\ref{EquationHeightIsReducedByOne})}}{=}\text{height}_{S/xS}(\fa/xS).
 		 \end{equation}  
 		
 		Using long exact sequences arising from $0\rightarrow M\rightarrow M\rightarrow M/xM\rightarrow 0$, we get the diagram 
 		\begin{center}
 			$\begin{CD}
 			\underset{\substack{=0\\\text{Part (\ref{itm:LocalCohomologiesBelowTheGradeAllVanish})}}}{\underbrace{H^{g-1}_{\fa}(M)}} @>>> H^{g-1}_{\fa}(M/xM)@>\text{injective}>> H^g_{\fa}(M)\\
 			@AAA @AAA @AAA\\
 			\underset{\substack{=0\\\text{(definition of grade)}}}{\underbrace{H_{n-(g-1)}(\fa,M)} } @>>> H_{n-(g-1)}(\fa,M/xM) @>>\cong> H_{n-g}(\fa,M) @>>> \underset{\substack{=0\ \\(\text{\cite[Proposition 1.6.5(b)]{BrunsHerzogCohenMacaulay}}})}{\underbrace{xH_{n-g}(\fa,M)}}
 			\end{CD}$
 		\end{center}
 		Hence, it suffices to notice that $ H_{n-(g-1)}(\fa,M/xM) \rightarrow H^{g-1}_{\fa}(M/xM)$ is injective, which holds true by our inductive hypothesis together with display (\ref{EquationGradeAndHieghtAndSpecialization}) and the balanced big Cohen-Macaulayness of $M/xM$ (over $S/xS$) as well as the fact that $H_{n-(g-1)}(\fa,M/xM)$ and $H^{g-1}_{\fa}(M/xM)$ both can be realized as the same corresponding Koszul homology and local cohomology  over the ring $S/xS$ (with respect to $\fa( S/xS)$).
 	\end{proof}
 \end{lem}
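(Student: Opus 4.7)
The plan is to prove both parts through the interplay of grade, Ext, and local cohomology, with part (\ref{itm:ConditionSufficientForLocalCohomologyNonZeroAtGrade}) proceeding by induction on $g$.

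For part (\ref{itm:LocalCohomologiesBelowTheGradeAllVanish}) I would invoke the standard Ext-characterization of grade (explicitly recorded in the paper as Bruns--Herzog 9.1.2 and 9.1.3(b)): the invariance $\text{grade}(\fa^s,M)=\text{grade}(\fa,M)$ for every $s\in\mathbb{N}$ gives $\Ext^i_S(S/\fa^s,M)=0$ for all $s$ and all $i<\text{grade}(\fa,M)$. Passing to the direct limit via $H^i_\fa(M)\cong\varinjlim_s\Ext^i_S(S/\fa^s,M)$ and using exactness of direct limits yields the required vanishing.

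For part (\ref{itm:ConditionSufficientForLocalCohomologyNonZeroAtGrade}) I would induct on $g=\text{grade}(\fa,M)=\text{height}(\fa)$. In the base case $g=0$, the comparison map $H_n(\fa,M)\to H^0_\fa(M)$ is canonically the inclusion $(0:_M\fa)\hookrightarrow\Gamma_\fa(M)$, which is injective; the grade hypothesis guarantees $0:_M\fa\neq 0$, giving the ``in particular'' clause. For the inductive step I would pick $x\in\fa\cap S^\circ$, which is available because $\text{height}(\fa)=g>0$ prevents $\fa$ from lying in any minimal prime of $S$. Since $x$ is a parameter of $S$ (extendable to a full system of parameters) and $M$ is balanced big Cohen--Macaulay, $x$ is $M$-regular, $M/xM$ is a balanced big Cohen--Macaulay $S/xS$-module by Sharp's lemma, and the numerical identities $\text{height}_{S/xS}(\fa/xS)=g-1=\text{grade}(\fa/xS,M/xM)$ hold, so the inductive hypothesis applies to the triple $(S/xS,\,\fa/xS,\,M/xM)$.

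The final step is a diagram chase driven by the short exact sequence $0\to M\xrightarrow{\,x\,}M\to M/xM\to 0$. On the local cohomology side, the piece $H^{g-1}_\fa(M)\to H^{g-1}_\fa(M/xM)\to H^g_\fa(M)$ has vanishing left-hand term by part (\ref{itm:LocalCohomologiesBelowTheGradeAllVanish}), so the right-hand arrow is injective. On the Koszul side, the corresponding long exact sequence, combined with $H_{n-(g-1)}(\fa,M)=H_{n-g+1}(\fa,M)=0$ (by the definition of grade) and with the annihilation $xH_{n-g}(\fa,M)=0$ (Bruns--Herzog 1.6.5(b), cited by the author), forces $H_{n-(g-1)}(\fa,M/xM)\to H_{n-g}(\fa,M)$ to be an isomorphism. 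Naturality of the Koszul-to-local-cohomology comparison then assembles these two sequences into a commutative square whose top edge is an isomorphism, whose bottom edge is injective by part (\ref{itm:LocalCohomologiesBelowTheGradeAllVanish}), and whose left vertical is injective by the inductive hypothesis applied over $S/xS$; a two-step chase forces the right vertical $H_{n-g}(\fa,M)\to H^g_\fa(M)$ to be injective, and the non-vanishing of $H^g_\fa(M)$ follows because $H_{n-g}(\fa,M)\neq 0$ by the definition of grade. The main point requiring care — and the only mildly delicate input — is checking that the comparison map for $M/xM$ computed over $S$ with respect to $\fa$ agrees under the natural identification with the one computed over $S/xS$ with respect to $\fa/xS$; once this naturality is in place the argument is routine.
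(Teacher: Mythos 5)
Your proposal is correct and follows essentially the same route as the paper's proof: part (i) via grade-invariance of powers, Ext-vanishing, and the direct-limit description of local cohomology; part (ii) by induction on $g$, choosing $x\in\fa\cap S^\circ$, passing to the balanced big Cohen--Macaulay $S/xS$-module $M/xM$, and chasing the same commutative square built from the two long exact sequences of $0\to M\xrightarrow{x}M\to M/xM\to 0$. You also correctly flag the base-change compatibility of the Koszul-to-local-cohomology comparison map as the one point needing care, which is exactly the subtlety the paper notes at the end of its proof.
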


\begin{rem}\label{RemarkImposedConditionsForLemmaNonVanishingOfLocalCohomologyAtGrade}
   \emph{	In the proof of the second part of the previous lemma the assumption on the balanced big Cohen-Macaulayness of $M$ is used (thus perhaps is necessary) for deducing the existence of an $M$-regular element $x\in \fa$ from $\text{grade}(\fa,M)>0$ (see also \cite[Theorem (2.6)]{SharpCohenMacaulayProperties} and \cite[Exercise 9.1.10(d)]{BrunsHerzogCohenMacaulay}). Then the assumption on $\grade(\fa,M)=\text{height}(\fa)$ is imposed to enable us to pick $x\in \fa$ so that $M/xM$ is a balanced big Cohen-Macaulay $S/xS$-module as well, which  is needed for our inductive step (cf. Remark \ref{RemarkGradeCanBeLargerThanHeight}).}
\end{rem}

\begin{lem}\label{LemmaDesiredBalancedBigCMImpliesVariantOFCEC} Fix some $\fp,\mathbf{x},w,G_\bullet$ as in Consideration \ref{Consideration} such that $\fp$ is a prime ideal \footnote{Here we used the notation $\fp$ in place of $\fq$ when we  referred to Consideration \ref{Consideration}  because  it is a prime ideal.}. Suppose that  there is a balanced big Cohen-Macaulay $S$-module $M$ such that
	\begin{enumerate}[(i)]
		\item\label{itm:AssCondition} $\fp\in \Ass_S\big(M/(x_2,\ldots,x_d)M\big)$, or equivalently $\text{grade}(\fp,M)=d-1$. In the case where $S$ is (furthermore) a catenary domain, this is also equivalent to say that $M_\fp$ is a  balanced big Cohen-Macaulay $S_\fp$-module.
		\item\label{itm:SeparatedCondition}  $M/(x_2,\ldots,x_d)M$ is $\fm_S$-adically separated. 
	\end{enumerate}	
		Then  Proposition-Definition \ref{PropositionVarientCET}(\ref{itm:VariantCEProperty})  holds for $\fp,\mathbf{x},w,G_\bullet$ and any chain map $\phi_\bullet:K_\bullet(x_1^w,x_2,\ldots,x_d;S)\rightarrow G_\bullet$ lifting the natural epimorphism on $0$-th homologies.
	\begin{proof}
        First we notice that,   if $\fp\in \Ass_S\big(M/(x_2,\ldots,x_d)M\big)$ then $\grade\big(\fp,M/(x_2,\ldots,x_d)M\big)=0$ (\cite[Proposition 9.1.2(a)]{BrunsHerzogCohenMacaulay}), i.e. $\text{grade}(\fp,M)=d-1$ (\cite[Proposition 9.1.2(b)]{BrunsHerzogCohenMacaulay}). Conversely, suppose that $\text{grade}(\fp,M)=d-1$  or equivalently $\grade\big(\fp,M/(x_2,\ldots,x_d)M\big)=0$. Then $M/(x_2,\ldots,x_d)M$ is a balanced big Cohen-Macaulay module over $S/(x_2,\ldots,x_d)$ (\cite[Lemma (2.3)]{SharpCohenMacaulayProperties}), therefore we must have $\fp\in \Ass\big(M/(x_2,\ldots,x_d)M\big)$ because $\fp$ annihilates  a non-zero element of $M/(x_2,\ldots,x_d)M$ in view of \cite[Proposition 9.1.2(a)]{BrunsHerzogCohenMacaulay}, while $\fm\notin \Ass\big(M/(x_2,\ldots,x_d)M\big)$ by \cite[Proposition 8.5.5]{BrunsHerzogCohenMacaulay}.  If $S$ is a catenary domain, for the mentioned equivalence in (\ref{itm:AssCondition}) with the balanced big Cohen-Macaulayness of $M_\fp$ over $S_\fp$ we refer to \cite[Theorem 4.3]{SharpACousin}. Here we remark further  that if $M_\fp$ is a balanced big Cohen-Macaulay $S_\fp$-module, then $\grade(\fp S_\fp,M_\fp)=\text{height}(\fp)$ by \cite[Exercise 9.1.12]{BrunsHerzogCohenMacaulay} \footnote{See \cite[page 350]{BrunsHerzogCohenMacaulay},  for the notion of depth used in the cited exercise.} from which we can deduce that $\fp\in \Ass\big(M/(x_2,\ldots,x_d)M\big)$ as above.

		Suppose to the contrary that there is some chain map $\phi_\bullet:K_\bullet(x_1^w,x_2,\ldots,x_d;S)\rightarrow G_\bullet$ lifting the natural epimorphism on $0$-th homologies, for which Proposition-Definition \ref{PropositionVarientCET}(\ref{itm:VariantCEProperty}) does not hold.  Let   $e:=\phi_0(1)$ be a basis of $G_0$  representing the homology class $[1+(x_1^w,\fp)]$ in $H_0(G_\bullet)$, and let $\mathbf{x}':=x_2,\ldots,x_d$. 
    	By condition (\ref{itm:AssCondition}),
		there is  some $b+\mathbf{x}'M\in M/ \mathbf{x}'M$ with  	  $\fp= (\mathbf{x}'M):_Sb\subseteq \fm_S$. Additionally, without loss of generality, we may and we do assume that 
		\begin{equation}
		\label{EquationbIsNotThere}
		b\notin (\mathbf{x})M.
		\end{equation}
		Namely, let  $b\in \mathbf{x}M$. Then  by condition (\ref{itm:SeparatedCondition}),  $b\in (x_1^n,x_2,\ldots,x_d)M\backslash  (x_1^{n+1},x_2,\ldots,x_d)M$ for  some $n\in\mathbb{N}$. Assuming $b=x_1^nb_1+\sum\limits_{i=2}^dx_ib_i$,   we   get $\fp=(\mathbf{x}'M):_Sx_1^nb_1$. From this and the balanced big Cohen-Macaulayness of $M$   we get $\fp= (\mathbf{x}'M):_Sb_1$. On the other hand $b_1\notin (\mathbf{x})M$ 
		otherwise we get $b\in 
		(x_1^{n+1},x_2,\ldots,x_d)M$    contradicting  our choice of $n$.
		
		Since $\fp\subseteq(\mathbf{x}'M:_Sb)$ and $K_\bullet(x_1^w,x_2,\ldots,x_d;M)$ is acyclic so there is a chain map $$\theta_\bullet:G_\bullet\rightarrow K_\bullet(x_1^w,x_2,\ldots,x_d;M)$$ such that $\theta_0(e)=b$, and consequently $$\Delta_\bullet:=\theta_\bullet\circ\phi_\bullet:K_\bullet(x_1^w,x_2,\ldots,x_d;S)\rightarrow K_\bullet(x_1^w,x_2,\ldots,x_d;M)$$ is a chain map such that $\Delta_0(1)=b$ while 
		\begin{equation}
		\label{EquationSecondBadRelationInB}
		x_1^{w-1}\Delta_d(1)\in (x_1^{w},x_2,\ldots,x_d)M.
		\end{equation}
		
		In view of Lemma  \ref{LemmaBasicFactInHomologicalAlgebra}(\ref{itm:NullHomotopic}),   $\Delta_\bullet$ is homotopic to  $$\Psi_\bullet:K_\bullet(x_1^w,\ldots,x_d;S)\overset{\cong}{\rightarrow} K_\bullet(x_1^w,\ldots,x_d;S)\otimes_S S\overset{\text{id}\otimes (1\mapsto b)}{\longrightarrow} K_\bullet(x_1^w,\ldots,x_d;S)\otimes_SM\overset{\cong}{\rightarrow} K_\bullet(x_1^w,x_2,\ldots,x_d;M)$$ with $\Psi_d(1)=b$ and $\Psi_0(1)=b$  
		and   we get   $$\Delta_d(1)-\Psi_d(1)\in (x_1^w,x_2,\ldots,x_d)M$$ 
		(Lemma  \ref{LemmaBasicFactInHomologicalAlgebra}(\ref{itm:RelationInSyz})) and consequently $$x_1^{w-1}\Delta_d(1)-x_1^{w-1}\Psi_d(1)\in  (x_1^w,x_2,\ldots,x_d)M.$$
		
		From this and display (\ref{EquationSecondBadRelationInB}) we get $$ x_1^{w-1}b=x_1^{w-1}\Psi_d(1)\in (x_1^{w},x_2\ldots,x_d)M,$$ implying that $b\in (x_1,\ldots,x_d)M$ as $M$ is  Cohen-Macaulay. This contradicts  display (\ref{EquationbIsNotThere}).  	  	  
	\end{proof}
\end{lem}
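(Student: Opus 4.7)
The plan is to argue by contradiction, adapting the classical Koszul-complex strategy that detects failure of a canonical-element-type statement via a balanced big Cohen--Macaulay module. First I would record the equivalence of the two clauses of condition (\ref{itm:AssCondition}): writing $\mathbf{x}' := x_2, \ldots, x_d$, the direction $\fp \in \Ass_S(M/\mathbf{x}'M) \Rightarrow \grade(\fp, M) = d-1$ uses \cite[Proposition 9.1.2]{BrunsHerzogCohenMacaulay} together with the $\mathbf{x}'$-regularity of $M$; conversely, $\grade(\fp, M) = d-1$ forces $\grade(\fp, M/\mathbf{x}'M) = 0$, so since $M/\mathbf{x}'M$ is a balanced big Cohen--Macaulay $S/(\mathbf{x}')$-module by \cite[Lemma (2.3)]{SharpCohenMacaulayProperties} while $\fm_S \notin \Ass(M/\mathbf{x}'M)$, the prime $\fp$ itself must be associated.

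Now assume for contradiction that Proposition \ref{PropositionVarientCET}(\ref{itm:VariantCEPropertyUnifiedSOP}) fails for some chain map $\phi_\bullet$ lifting the natural epimorphism on $H_0$; set $e := \phi_0(1)$. The crucial move is to produce an element $b \in M$ satisfying $\fp \subseteq (\mathbf{x}'M :_S b) \subseteq \fm_S$ and, essentially, $b \notin (\mathbf{x})M$. The associated prime condition immediately supplies $b$ meeting the colon requirement; if this $b$ happens to lie in $(\mathbf{x})M$, then the $\fm_S$-adic separatedness of $M/\mathbf{x}'M$ lets me pick $n \in \mathbb{N}$ maximal with $b \in (x_1^n, \mathbf{x}')M$, write $b = x_1^n b_1 + \sum_{i \geq 2} x_i b_i$, and transfer the colon condition to $b_1$ using the regularity of $x_1^n$ on $M/\mathbf{x}'M$; the maximality of $n$ then forces $b_1 \notin (\mathbf{x})M$, and I replace $b$ by $b_1$.

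With $b$ in hand, the acyclicity of $K_\bullet(x_1^w, \mathbf{x}'; M)$ (another consequence of the balanced big Cohen--Macaulay property) lets me lift the assignment $e \mapsto b$ to a chain map $\theta_\bullet : G_\bullet \to K_\bullet(x_1^w, \mathbf{x}'; M)$. The composition $\Delta_\bullet := \theta_\bullet \circ \phi_\bullet$ is then a chain map of Koszul complexes with $\Delta_0(1) = b$ which inherits the presumed failure of the variant CET, yielding
$$x_1^{w-1} (x_1^{w(v-1)} x_2^{v-1} \cdots x_d^{v-1}) \Delta_d(1) \in (x_1^{wv}, x_2^v, \ldots, x_d^v) M.$$
Comparing $\Delta_\bullet$ with the obvious base-change chain map $\Psi_\bullet : K_\bullet(x_1^w, \mathbf{x}'; S) \to K_\bullet(x_1^w, \mathbf{x}'; M)$ induced by $1 \mapsto b$ (which satisfies $\Psi_d(1) = b$), Lemma \ref{LemmaBasicFactInHomologicalAlgebra}(\ref{itm:RelationInSyz}) forces $\Delta_d(1) - \Psi_d(1) \in (x_1^w, \mathbf{x}')M$; substituting into the displayed relation gives
$$x_1^{wv-1} x_2^{v-1} \cdots x_d^{v-1}\, b \in (x_1^{wv}, x_2^v, \ldots, x_d^v) M,$$
and the classical monomial relation for regular sequences, applied to the $M$-regular sequence $\mathbf{x}$, forces $b \in (\mathbf{x})M$, contradicting the choice of $b$.

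The main obstacle is the ``$b \notin (\mathbf{x})M$'' replacement step: this is where the $\fm_S$-adic separatedness hypothesis enters in an essential way, since without it the descent on $n$ need not terminate in a witness outside $(\mathbf{x})M$ and the final monomial-relation argument would have nothing to contradict.
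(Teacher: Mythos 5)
Your proposal follows the paper's own argument essentially step for step: the equivalence of the two forms of condition (i), the contradiction setup, the production of $b$ with $\fp\subseteq(\mathbf{x}'M:_Sb)\subseteq\fm_S$ together with the separatedness-driven replacement to ensure $b\notin(\mathbf{x})M$, the lift $\theta_\bullet$, the comparison of $\Delta_\bullet$ with the base-change map $\Psi_\bullet$ via Lemma \ref{LemmaBasicFactInHomologicalAlgebra}, and the concluding monomial relation for the $M$-regular sequence $\mathbf{x}$. No meaningful deviation from the paper's proof.
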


\begin{rem}\label{RemarkGradeCanBeLargerThanHeight}
	\emph{Especially when $S$ is a local ring with non-trivial zero divisors, it is possible   to have a maximal Cohen-Macaulay $S$-module $M$ and a prime ideal $\fp$ of $S$ such that $\grade(\fp,M)\gneq \text{height}(\fp)$. For instance, let $\fm$ be the maximal ideal  of a regular local ring $A$ of positive dimension and consider the Rees algebra $A[\fm t]$ and let  $S$ to be the localization of $A[\fm t]/(x^2t)$  at $(\fm,\fm t)$ where $x\in \fm$ is a non-zero element. Then $A=S/(\fm t)$ is a maximal Cohen-Macaulay $S$-algebra, $\text{height}(\fm S)=\text{height}(\fm A[\fm t])-1=0$, while $\grade(\fm, S/(\fm t))=\grade(\fm,A)=\dim(A)$. Thus, when we are applying Lemma \ref{LemmaDesiredBalancedBigCMImpliesVariantOFCEC}  even for rings admitting a maximal Cohen-Macaulay $S$-module $M$, we should be  careful to be assured that the condition $\grade(\fp,M)=d-1(=\text{height}(\fp))$ is satisfied. However  if $S$ is a normal excellent domain, then   for any balanced big Cohen-Macaulay module $M$ and any prime ideal $\fp$ of $S$   we have $\text{grade}(\fp, M)=\text{height}(\fp)$ by Theorem \ref{TheoremBalancedBigCMModuleLocalizes}. Thus in case $S$ is an excellent normal domain  and $M$ is a balanced big Cohen-Macaulay $S$-module, we can apply Lemma \ref{LemmaDesiredBalancedBigCMImpliesVariantOFCEC}  without checking condition (\ref{itm:AssCondition}).}
\end{rem}

\begin{thm}\label{TheoremBalancedBigCMModuleLocalizes} (cf. \cite[Proposition 2.11]{BaMaPaScTuWaWi21})
      Let $(S,\fm_S)$ be an excellent local normal domain.  Let  $M$ be a balanced big Cohen-Macaulay $S$-module. Then for any $\fp\in \text{Spec}(S)$,  $M_{\fp}$ is a balanced big Cohen-Macaulay $S_{\fp}$-module and  $\text{grade}(\fp,M)=\text{height}(\fp)$. 
      \begin{proof}
      	Following \cite[Exercise 8.5.8]{BrunsHerzogCohenMacaulay}, we define the \textit{small support} of $M$ as $$\text{supp}(M):=\{\fp\in \text{Spec}(S):\fp M_{\fp}\neq M_\fp\}$$
      	while $\Supp(M)$ denotes the usual notion of support that is  $\Supp(M)=\{\fp\in \text{Spec}(S):M_{\fp}\neq 0\}$.
      	
      	First, we  notice that $\text{Supp}(M)=\text{Spec}(S)$, as $S$ is a domain and $M$ is a balanced big Cohen-Macaulay module. 
      	Pick some $\fp\in \text{Spec}(S)\backslash \{\fm_S\}$ (the statement holds for $\fm_S$).  In order to prove that  $M_\fp$ is a  balanced big Cohen-Macaulay  $R_{\fp}$-module and also to prove that $\grade(\fp,M)=\text{height}(\fp)$,	by  localization at a prime ideal $\fq$ containing $\fp$ with $\text{height}(\fq)=\text{height}(\fp)+1$, we can see that  it suffices to prove the statement under the assumption that $\text{height}(\fp)=\dim(S)-1$. Namely, for the equality $\grade(\fp,M)=\text{height}(\fp)$ we notice that $\text{height}(\fp)\overset{}{\le}\text{grade}(\fp,M)$ as $M$ is a balanced big Cohen-Macaulay (\cite[Proposition 9.1.2(b)]{BrunsHerzogCohenMacaulay}) and thus
      	\begin{align*}
      	  \text{height}(\fp)\overset{}{\le}\text{grade}(\fp,M)&\overset{}{\le} \text{grade}(\fp S_{\fq},M_{\fq}), && (\text{definition of grade})&\\&\overset{}{=}  \text{height}(\fp S_{\fq}), && (\text{statement holds over }S_{\fq}  (\text{height}(\fq)=\text{height}(\fp)+1))&\\&=\text{height}(\fp).
      	\end{align*}  

      	Hence, without loss of generality, we may and we do assume that $\text{height}(\fp)=\dim(S)-1$. Since $S$ is an excellent normal domain, thus so  is its completion $\widehat{S}$, therefore we can appeal to \cite[8.2.1 The Lichtenbaum–Hartshorne Vanishing Theorem]{BrodmannSharpLocal} to observe that $H^{\dim(S)}_{\fp}(S)=0$ and thence $H^{\dim(S)}_{\fp}(M)=0$, therefore we know that 
      	\begin{equation}
      	   \label{EquationVanishingOfLocalCohomologiesAtPForGreaterOrEqualThanD}
      	    \forall\ i\ge \dim(S),\ \ \     	H^i_{\fp}(M)=0
      	\end{equation}
      	by  Grothendieck's Vanishing Theorem. 
      	
      	Since $H_0(\fp,M)=M/\fp M\neq 0$, so $\text{grade}(\fp,M)<\infty$ by  definition of grade (Definition \ref{DefinitionGradedKoszul}). We show that $\text{grade}(\fp,M)=\text{height}(\fp)$.  If  $\text{grade}(\fp,M)\ge \dim(S)$, then Lemma \ref{LemmaInjectivityOfMapTOLocalCohomologyAtGrade}(\ref{itm:LocalCohomologiesBelowTheGradeAllVanish}) in conjunction with display (\ref{EquationVanishingOfLocalCohomologiesAtPForGreaterOrEqualThanD}) implies that $H^i_{\fp}(M)=0$ for all $i$. Then appealing to \cite[Corollary 1.4]{SchenzelOnTheUse} for some (or any) $x\in \fm_S\backslash \fp$, from $H^*_{\fp}(M)=0$ we conclude that $H^i_{\fm_S}(M)=0$ for all $i$ (we recall that $\text{height}(\fp)=\dim(S)-1$) which is a contradiction (\cite[Exercise 9.1.12(a) and (c)]{BrunsHerzogCohenMacaulay}).
      	 Therefore, $\grade(\fp,M)\le \dim(S)-1$. On the other hand $\grade(\fp,M)\ge \dim(S)-1$ in view of \cite[Proposition 9.1.2(b)]{BrunsHerzogCohenMacaulay}, because $\fp$ contains a part of a system of parameters of length $\dime(S)-1$ which has to be a regular sequence on $M$. So $\grade(\fp,M)=\dim(S)-1=\text{height}(\fp)$ and thence Lemma \ref{LemmaInjectivityOfMapTOLocalCohomologyAtGrade}(\ref{itm:ConditionSufficientForLocalCohomologyNonZeroAtGrade}) and (\ref{itm:LocalCohomologiesBelowTheGradeAllVanish}) implies that $H^{\dim(S)-1}_{\fp}(M)$ is the only non-zero local cohomology of $M$ supported at $\fp$.

      	Let $h\in S\backslash \fp$.	      In view of \cite[Corollary 1.4]{SchenzelOnTheUse},   $H^0_{hS}\big(H^{\dim(S)-1}_{\fp}(M)\big)$ is a homomorphic image of $H^{\dim(S)-1}_{\fm_S}(M)$ while the latter module is zero because of the big Cohen-Macaulayness of $M$. Therefore  $H^0_{hS}\big(H^{\dim(S)-1}_{\fp}(M)\big)=0$, i.e. the non-zero module $H^{\dim(S)-1}_{\fp}(M)$ (non-zero because of the conclusion of the previous paragraph) has no non-zero $h$-torsion element and thence $H^{\dim(S)-1}_{\fp S_{h}}(M_h)\neq 0$. From this fact together the maximality of    $\fp S_h\in \text{Spec}(S_h)$ we get $\Supp\big(H^{\dim(S)-1}_{\fp S_{h}}(M_h)\big)=\{\fp S_h\}$ and consequently $H^{\dim(S)-1}_{\fp S_\fp}(M_\fp)\neq 0$. Then the statement follows from \cite[Exercise 8.5.9]{BrunsHerzogCohenMacaulay} and \cite[Theorem 4.3]{SharpACousin}.
	\end{proof}
\end{thm}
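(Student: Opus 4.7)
The plan is to reduce to the case where $\fp$ has height $\dim(S)-1$ and then exploit the Lichtenbaum--Hartshorne vanishing theorem (which applies because excellent normal local domains are analytically irreducible). First I would observe that the grade and localization statements at a prime $\fp$ with $\text{height}(\fp)=k$ can be obtained from the statements at primes of height $k+1$: localizing at a prime $\fq\supseteq \fp$ of height $k+1$, if the theorem is known over $S_\fq$ then $M_\fp=(M_\fq)_{\fp S_\fq}$ is balanced big Cohen--Macaulay over $S_\fp=(S_\fq)_{\fp S_\fq}$, and the chain
\[
\text{height}(\fp)\le \grade(\fp,M)\le \grade(\fp S_\fq,M_\fq)=\text{height}(\fp S_\fq)=\text{height}(\fp)
\]
gives the grade equality (the first inequality is \cite[Proposition 9.1.2(b)]{BrunsHerzogCohenMacaulay}, the middle follows from Definition \ref{DefinitionGradedKoszul} applied to a localization, the subsequent equality is the induction hypothesis). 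Thus it suffices to handle the case $\text{height}(\fp)=\dim(S)-1$.

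In that case, since $\widehat{S}$ is an excellent normal, hence analytically irreducible, domain, the Lichtenbaum--Hartshorne Vanishing Theorem \cite[8.2.1]{BrodmannSharpLocal} yields $H^{\dim(S)}_{\fp}(S)=0$ and hence $H^{i}_{\fp}(M)=0$ for all $i\ge \dim(S)$, by Grothendieck vanishing. Next I want to pin down the grade: the lower bound $\grade(\fp,M)\ge \dim(S)-1$ is automatic because $\fp$ contains a partial system of parameters of length $\dim(S)-1$, which is $M$-regular. To get the reverse inequality, I would argue by contradiction: if $\grade(\fp,M)\ge \dim(S)$ then Lemma \ref{LemmaInjectivityOfMapTOLocalCohomologyAtGrade}(\ref{itm:LocalCohomologiesBelowTheGradeAllVanish}) together with the vanishing above forces $H^{i}_{\fp}(M)=0$ for every $i$; applying Schenzel's spectral-sequence-type comparison \cite[Corollary 1.4]{SchenzelOnTheUse} for any $x\in \fm_S\setminus \fp$ would then transfer this vanishing to $H^{*}_{\fm_S}(M)$, contradicting $H^{\dim(S)}_{\fm_S}(M)\neq 0$, which holds because $M$ is balanced big Cohen--Macaulay. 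So $\grade(\fp,M)=\dim(S)-1=\text{height}(\fp)$, and Lemma \ref{LemmaInjectivityOfMapTOLocalCohomologyAtGrade}(\ref{itm:ConditionSufficientForLocalCohomologyNonZeroAtGrade}) together with (\ref{itm:LocalCohomologiesBelowTheGradeAllVanish}) shows that $H^{\dim(S)-1}_{\fp}(M)$ is the unique nonzero local cohomology module of $M$ supported at $\fp$.

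It remains to verify that $M_\fp$ is balanced big Cohen--Macaulay over $S_\fp$. It suffices to show $H^{\dim(S)-1}_{\fp S_\fp}(M_\fp)\neq 0$, since any system of parameters of $S_\fp$ lives in $\fp$ and is $M$-regular by what has already been shown; combined with the characterization of Cohen--Macaulayness via local cohomology (see \cite[Exercise 8.5.9]{BrunsHerzogCohenMacaulay} and \cite[Theorem 4.3]{SharpACousin}), this gives the balanced big Cohen--Macaulay property. For the nonvanishing, pick $h\in \fm_S\setminus \fp$; by \cite[Corollary 1.4]{SchenzelOnTheUse}, the module $H^{0}_{hS}\bigl(H^{\dim(S)-1}_{\fp}(M)\bigr)$ is a quotient of $H^{\dim(S)-1}_{\fm_S}(M)=0$, so $H^{\dim(S)-1}_{\fp}(M)$ has no $h$-torsion, whence $H^{\dim(S)-1}_{\fp S_h}(M_h)\neq 0$. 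Because $\text{height}(\fp)=\dim(S)-1$, the ideal $\fp S_h$ is maximal in $S_h$, so the support of this Artinian module is $\{\fp S_h\}$ and the further localization at $\fp$ preserves it.

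The main obstacle is the passage from the vanishing of $H^{*}_{\fm_S}(M)$ in the appropriate degree to the nonvanishing of $H^{\dim(S)-1}_{\fp}(M)$ after inverting $h$: this is exactly the ``no-$h$-torsion'' step, and it is precisely here that the normality assumption (through Lichtenbaum--Hartshorne) and Schenzel's comparison formula play an essential, seemingly unavoidable role. The inductive reduction to codimension one and the grade computation are comparatively routine once the correct formal framework is in place.
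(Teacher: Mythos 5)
Your proposal is correct and follows essentially the same route as the paper's proof: reduce to codimension one by localizing at a prime of one higher height, use Lichtenbaum--Hartshorne on the normal completion to kill $H^{\dim(S)}_{\fp}(S)$, pin down the grade via Lemma \ref{LemmaInjectivityOfMapTOLocalCohomologyAtGrade} and Schenzel's comparison, show $H^{\dim(S)-1}_{\fp}(M)$ has no $h$-torsion, and conclude by the support argument together with \cite[Exercise 8.5.9]{BrunsHerzogCohenMacaulay} and \cite[Theorem 4.3]{SharpACousin}. The only (cosmetic) difference is that you explicitly take $h\in\fm_S\setminus\fp$ rather than $h\in S\setminus\fp$, which is the precise hypothesis needed for Schenzel's formula and for $\fp S_h$ to be maximal in $S_h$.
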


The condition (\ref{itm:SeparatedCondition}) in Lemma \ref{LemmaDesiredBalancedBigCMImpliesVariantOFCEC} is available in view of the following remark.

\begin{rem}\label{RemarkBalancedBCMofCompletion}
	\emph{Let $(S,\fm_S)$ be a local ring and $M$ be a  big Cohen-Macaulay module (with respect to any system of parameters).}
	\begin{enumerate}[(i)]
		\item \label{itm:CompletionBigCM}  \emph{The $\fm_S$-adic completion $\widehat{M}$ of $M$ is  a balanced big Cohen-Macaulay module (\cite[Corollary  8.5.3]{BrunsHerzogCohenMacaulay})}.
		\item \label{itm:CompletionHasSeparatedQuotients} 
		\emph{ If $M$ is a balanced big Cohen-Macaulay module, then for each part of a system of parameters $\mathbf{x}'$ of $S$ we have the isomorphism $\widehat{M}/\mathbf{x}'\widehat{M}\cong \widehat{M/\mathbf{x}'M}$ by \cite[Theorem 5.2.3]{StrookerHomological}, so   $\widehat{M}/\mathbf{x}'\widehat{M}$ is $\fm_S$-adically complete  (see also \cite[Theorem 5.1.7]{StrookerHomological} and \cite[Definition 5.1.13]{StrookerHomological}).}
	\end{enumerate} 
\end{rem}

Now we show all local rings satisfy the Strong Canonical Element Property. 
To this aim, we apply the existence of    balanced  big Cohen-Macaulay algebras  thanks to \cite{HochsterHunekeInfinite}, \cite{HochsterHunekeApplications} and \cite{AndreWeakFunctoriality}. 
In particular, the next corollary  in this section and its  characteristic dependent  proof (induced by the characteristic dependent proof of the existence of balanced big Cohen-Macaulay algebras) have no relation with the content of the next section.  So the characteristic free nature of the results of  other sections, including the main results of our paper given in the next section, are  unaffected.

\begin{cor}\label{CorollaryVariantHolds}
  Let $(S,\fm_S)$ be a local ring. Then $S$ satisfies the Strong Canonical Element Property.
  \begin{proof}
 Let $d:=\dim(S)$ and pick some  $\fq,\mathbf{x},w,G_\bullet$ as in Consideration \ref{Consideration} as well as some chain map $\phi_\bullet:K_\bullet(x_1^w,x_2,\ldots,x_d;S)\rightarrow G_\bullet$ lifting the natural surjection on $0$-th homologies. We  show that Proposition-Definition \ref{PropositionVarientCET}(\ref{itm:VariantCEProperty}) holds for  $\fq,\mathbf{x},w,G_\bullet$ and $\phi_\bullet$. 
 
 By Remark \ref{RemarkVariantUnderLocalHomomorphism}, it suffices to prove the statement for the case where $S$ is complete. Also,   without loss of generality we may  and we do  assume that $\fq$ is prime.  Since $\text{height}(\mathfrak{\fq})=d-1$, so we automatically have $\text{height}(\mathfrak{q})+\dim(S/\mathfrak{q})=d$ and thus there exists a minimal prime ideal  $\fp_0$  of $S$ contained in $\fq$ such that $\dim(S/\fp_0)=\dim(S)$. Therefore, in view of Remark \ref{RemarkVariantUnderLocalHomomorphism}, by passing to $S/\fp_0$, without loss of generality we can assume that $S$ is a  domain. Then, similarly, by passing to the integral closure $\overline{S}$  of $S$  in $\Frac(S)$ and then replacing $\fq$ by a prime ideal of $\overline{S}$ lying over $\fq$, without loss of generality, we can assume that $S$ is a complete local normal domain.

  		Let $B$ be an $\fm_S$-adically complete balanced big-Cohen-Macaulay $S$-algebra  which exists in view of   Remark \ref{RemarkBalancedBCMofCompletion}(\ref{itm:CompletionBigCM}) as well as \cite{HochsterHunekeInfinite},  \cite{HochsterHunekeApplications} and \cite{AndreWeakFunctoriality} (see also \cite[Tag 05GG(1)]{Stacks}).    		
  		 Then by Theorem \ref{TheoremBalancedBigCMModuleLocalizes}, or \cite[Proposition 2.11]{BaMaPaScTuWaWi21}, we observe that $B_{\fq}$ is a balanced big Cohen-Macaulay $S_{\fq}$-algebra and $\text{grade}(\fq,B)=d-1$.  Moreover, $B/(x_2,\ldots,x_d)B$ is $\fm_S$-adically separated by Remark \ref{RemarkBalancedBCMofCompletion}(\ref{itm:CompletionHasSeparatedQuotients}). Therefore the statement follows from Lemma \ref{LemmaDesiredBalancedBigCMImpliesVariantOFCEC}.
  \end{proof}
\end{cor}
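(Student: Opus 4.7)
The plan is to verify Proposition \ref{PropositionVarientCET}(\ref{itm:VariantCEPropertyUnifiedSOP}) for an arbitrary choice of data $\fq,\mathbf{x},w,G_\bullet$ and lifting $\phi_\bullet$ as in Consideration \ref{Consideration}, by producing a balanced big Cohen-Macaulay module that fulfills the hypotheses of Lemma \ref{LemmaDesiredBalancedBigCMImpliesVariantOFCEC}. The work divides into a series of reductions followed by an application of that lemma to a suitable big Cohen-Macaulay algebra.

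First I would reduce to the case where $(S,\fm_S)$ is a complete local normal domain. Passing to $\widehat{S}$ is harmless by Remark \ref{RemarkVariantUnderLocalHomomorphism} applied to the completion map, and the height-$(d-1)$ ideal $\fq$ may be replaced by a minimal prime, so without loss of generality $\fq$ is prime. Since $\text{height}(\fq)=d-1$ and hence $\text{height}(\fq)+\dim(S/\fq)=d$, there exists a minimal prime $\fp_0\subseteq \fq$ with $\dim(S/\fp_0)=d$; another appeal to Remark \ref{RemarkVariantUnderLocalHomomorphism} replaces $S$ by the domain $S/\fp_0$, preserving the image of $\mathbf{x}$ as a system of parameters and of $\fq$ as a height $d-1$ prime. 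Finally, replacing $S$ by its integral closure in $\Frac(S)$ (which is module-finite and local, since $S$ is complete) and choosing a prime of the normalization lying over $\fq$ reduces to the normal domain case; this is exactly the setting under which Theorem \ref{TheoremBalancedBigCMModuleLocalizes} applies.

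Next, invoke the existence of an $\fm_S$-adically complete balanced big Cohen-Macaulay $S$-algebra $B$, which is available by \cite{HochsterHunekeInfinite}, \cite{HochsterHunekeApplications} and \cite{AndreWeakFunctoriality} combined with Remark \ref{RemarkBalancedBCMofCompletion}(\ref{itm:CompletionBigCM}). To feed $B$ into Lemma \ref{LemmaDesiredBalancedBigCMImpliesVariantOFCEC}, I must verify its two hypotheses. Hypothesis (\ref{itm:AssCondition}) — that $\text{grade}(\fq,B)=d-1$ — follows directly from Theorem \ref{TheoremBalancedBigCMModuleLocalizes}, applied to $B$ in its module-theoretic incarnation, since $\text{height}(\fq)=d-1$ in the normal excellent domain $S$. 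Hypothesis (\ref{itm:SeparatedCondition}) — the $\fm_S$-adic separatedness of $B/(x_2,\ldots,x_d)B$ — is supplied by Remark \ref{RemarkBalancedBCMofCompletion}(\ref{itm:CompletionHasSeparatedQuotients}), exploiting the $\fm_S$-adic completeness of $B$ together with the fact that $x_2,\ldots,x_d$ is a regular sequence on $B$.

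With both hypotheses verified, Lemma \ref{LemmaDesiredBalancedBigCMImpliesVariantOFCEC} applies to $(B,\fq,\mathbf{x},w,G_\bullet,\phi_\bullet)$ and yields the required non-containment, establishing Proposition \ref{PropositionVarientCET}(\ref{itm:VariantCEPropertyUnifiedSOP}) in the reduced setting, hence in $S$. The main substantive ingredients are Theorem \ref{TheoremBalancedBigCMModuleLocalizes} (for the grade equality) and the deep existence theorem for balanced big Cohen-Macaulay algebras; the genuinely nontrivial preparatory reductions — passing to the completion, to a domain, and to its normalization — are standard but must be stitched together carefully so that the chain map $\phi_\bullet$ and the triple $(\fq,\mathbf{x},w)$ survive each step as witnesses to a potential failure. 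I do not anticipate a significant obstacle beyond what has already been packaged into the cited results, so the proof is essentially an assembly argument.
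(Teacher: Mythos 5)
Your proposal follows the paper's own proof essentially step for step: the same chain of reductions (complete, then a domain via a minimal prime of maximal dimension inside $\fq$, then normalization), the same appeal to the existence of an $\fm_S$-adically complete balanced big Cohen--Macaulay algebra, the same use of Theorem \ref{TheoremBalancedBigCMModuleLocalizes} for the grade hypothesis and of Remark \ref{RemarkBalancedBCMofCompletion}(\ref{itm:CompletionHasSeparatedQuotients}) for the separatedness hypothesis, concluding by Lemma \ref{LemmaDesiredBalancedBigCMImpliesVariantOFCEC}. It is correct and matches the paper's argument.
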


\section{The main results}\label{SectionMainResults}

 Applying the results of the previous sections, in this section  we  show that a possible  deduction of   our variant of the Canonical Element Theorem   from the original Canonical Element Theorem provides us with a characteristic free proof  of the Canonical Element Theorem. The same conclusion will be shown under an alternative assumption on the stability of  the CE Property   with respect to  the localization over excellent factorial domains.  
  Another result of this section is Corollary \ref{CorollaryBigCohenMacaulay}  which suggests an approach for settling the Balanced Big Cohen-Macaulay Module Conjecture by a characteristic free proof.
 
 
\begin{thm} 
	\label{MonomialTheoremCharacteristicFree}Let $(R,\fm_R)$ be a local ring.	Then $R$ satisfies   Hochster's Canonical Element Conjecture (followed by a characteristic free proof) provided either of the following assertions holds (in a characteristic free way):
	\begin{enumerate}[(i)]
		\item \label{itm:SufficientConditionExcellentUFD} For a fixed excellent factorial local domain that is a homomorphic image of a regular local ring, equivalent parts of Proposition-Definition \ref{PropositionVarientCET} hold    provided  the original Canonical Element Conjecture holds.
		\item\label{itm:SufficentConditionLocalization} If a fixed excellent factorial local domain that is a homomorphic image of a regular local ring satisfies the Canonical Element Conjecture, then so does the localization of it at any prime ideal.
	\end{enumerate}
\end{thm}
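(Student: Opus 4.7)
The plan is to follow the three-step roadmap outlined at the end of the introduction, reducing the original $R$ step-by-step to a situation where the hypothesis (i) or (ii) can be applied. First, by the argument in \cite[Proposition 2.7]{TavanfarReduction} one builds from $R$ a faithfully flat extension which is a complete quasi-Gorenstein domain that is locally a complete intersection in codimension $\le 1$; Lemma \ref{RingHomomorphismANDCEC} then shows that the CE Property descends, so we may assume from the start that $R$ itself has this form. Next, invoking Huneke--Ulrich's generic linkage, one produces an excellent regular local ring $(A,\fm_A)$ and linked ideals $\fa,\fb\subseteq A$ such that $A/\fb$ is a trivial deformation of $R$, while $\fa$ is a prime almost complete intersection ideal whose quotient $A/\fa$ is locally complete intersection in codimension $\le 1$; by Lemma \ref{RemarkLinkageOfQuasiGorensteinIdealsAndAlmostCompleteIntersectionIdeals}, $\fb$ is quasi-Gorenstein and the two rings fit into the framework of Proposition \ref{StructureProposition}.

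The next step is to verify the CE Property for $S := (A/\fa)_{\fm_A/\fa}$. Since $\fa$ is an almost complete intersection, $A/\fa$ has depth one less than $A/\fb$ by Corollary \ref{GoodRegularSequenceCorollary}(\ref{itm:DepthIdentity}), and we obtain the extra depth needed to apply the attached-prime criterion of Lemma \ref{AttLocalCohomologyDepthCanonicalModule} to $S$. Combined with Lemma \ref{DeformationOFCEC}(\ref{itm:CETandDeformation}), an inductive argument on $\depth$, killing one regular element at a time avoiding the attached primes, reduces the CE Property for $S$ to the trivial Cohen-Macaulay case (cf.\ \cite[step 2, page 238]{DuttaGriffithIntersectionMultiplicity}).

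Now comes the bridge between $S$ and $A/\fb$ via the extended Rees algebra $T := A[\fa t, t^{-1}]$, which by \cite{HochsterCriteria} and \cite{HunekeAlmost} is a factorial excellent domain, hence quasi-Gorenstein as a homomorphic image of a regular local ring. Proposition \ref{StructureProposition} provides the crucial structural information: the homogeneous maximal regular sequence of part (\ref{itm:HomogeneousMaximalRegularSequenceOfEReesAlgebra}), together with the depth and attached-prime control of part (\ref{itm:SpecializationOfEReesAlgebraIsS2}), allow us to apply Lemma \ref{DeformationOFCEC}(\ref{itm:CETandDeformation}) iteratively to pass the CE Property from $S \cong T_{\mathfrak{M}}/(a_1t,\dots,a_g t)$ all the way up to $T_{\mathfrak{M}}$, localizing at the height-$d$ prime $\mathfrak{P}$ of part (\ref{itm:BeingPrimeIdealHeightd}) when necessary.

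At this stage $T_\mathfrak{M}$ is a factorial excellent quasi-Gorenstein domain satisfying the CE Property, and it remains to transfer the property to $A/\fb$. This is precisely where hypothesis (i) or (ii) enters: under (ii) one localizes $T$ at the prime $(ht)$ (which misses $t^{-1}$ since $ht$ is a degree-$1$ element), and under (i) one produces, via Proposition \ref{PropositionVarientCET}, a non-vanishing statement for the parameter $t^{-1}$ at a suitable power. By Lemma \ref{LemmaEquationOfSymmetricAlgebra}(\ref{itm:ColonIdealAsaSpecilizationOfTheSymmetricAlgebra}), the degree-zero part of $T_{ht}$ modulo $(a_1t/ht,\dots,a_gt/ht)$ is exactly $A/(\fc : h) = A/\fb$, which after adjoining the indeterminate $h/(ht)^{-1}$ (which acts surjectively on top local cohomology by Lemma \ref{LemmaIndeterminateActsSurjectivelyonLocalCohomology}) is $(A/\fb)[X]_{(\fm,X)}$; Lemma \ref{DeformationOFCEC}(\ref{itm:CETandSpecialization}) then descends the CE Property through the regular sequence of parameters to $A/\fb$ itself, and Lemma \ref{RingHomomorphismANDCEC} finally pushes it back to $R$. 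The main obstacle, and the whole reason for introducing the hypotheses, is this last transfer from $T_{\mathfrak{M}}$ to the non-local localization $T_{ht}$: without (i) or (ii), the localization behavior of the CE Property at a non-maximal prime is exactly the classical open problem of the Canonical Element Conjecture.
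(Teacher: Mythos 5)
Your high-level roadmap faithfully follows the three-step outline the paper lays out in its introduction, and you correctly pin down where the hypotheses (i) and (ii) enter: the transfer from the local ring $A[\fa t,t^{-1}]_{\mathfrak{M}}$ to the non-local localization $A[\fa t,t^{-1}]_{ht}$. But several of the technical details that make the argument actually work are stated incorrectly, and at least two of these are substantive enough that the proof as you describe it would not go through.

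First, in Step~1 the extension $R\to R[\omega_R X]/\mathfrak{k}$ of \cite[Proposition 2.7]{TavanfarReduction} is a \emph{module-finite} integral extension, not a faithfully flat one; Lemma~\ref{RingHomomorphismANDCEC} descends the CE Property along it because a system of parameters of $R$ stays a system of parameters, which has nothing to do with flatness. More seriously, when you invoke hypothesis~(ii), you say one ``localizes $T$ at the prime $(ht)$'' — but $(ht)$ is not a prime ideal and $T_{ht}$ is not a local ring, so hypothesis~(ii) literally does not apply to it. The paper in Step~7 instead applies~(ii) to the localization at the height-$d$ prime $\mathfrak{P}=(\fm_A,a_1t,\dots,a_gt,t^{-1})$ of Proposition~\ref{StructureProposition}(\ref{itm:BeingPrimeIdealHeightd}); in $T_{\mathfrak{P}}$ the element $ht$ is invertible and $\mathbf{f}$ becomes a system of parameters, which is exactly what gives the contradiction with display~(\ref{CEforfAtht}). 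Similarly, when you invoke hypothesis~(i) you say the non-vanishing is for ``the parameter $t^{-1}$'' — but in the paper's application of Proposition~\ref{PropositionVarientCET}(\ref{itm:VariantCEProperty}) the distinguished element $x_1$ raised to a power is $ht$, not $t^{-1}$: the whole construction with the mapping cone of $(ht)^w.\id_{F_\bullet}$ and the chain map $\Delta_\bullet$ produces the relation
\begin{equation*}
(ht)^{w-1}\Delta_{d+1}(1)+\text{im}(\partial^{G_\bullet}_{d+2})\in\big((ht)^{w},f_1^w,\dots,f_d^w\big)\,\text{syz}^{G_\bullet}_{d+1}\Big(A[\fa t,t^{-1}]/(\mathfrak{P},(ht)^w)\Big),
\end{equation*}
and it is this form, with $x_1=ht$, that contradicts the variant CE Property.

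Finally, your description of the last descent from $T_{ht}$ to $R$ is too compressed to be checked, and some of its components are misplaced. There is no ``adjoining the indeterminate $h/(ht)^{-1}$'' at this stage, and Lemma~\ref{LemmaIndeterminateActsSurjectivelyonLocalCohomology} belongs in Step~5 (going from $(A/\fa)[X]_{(\fm_A,X)}$ to $T_{\mathfrak{M}}/(a_1t,\dots,a_gt)$), not at the end. The paper's actual Steps 8--10 first convert the CE-type statement for the inhomogeneous sequence $\mathbf{f}$ in $T_{ht}$ into the corresponding statement for the homogeneous sequence $\mathbf{z}=\mathbf{z}',a_1t/ht,\dots,a_gt/ht$, then pass to the degree-zero subring $Q:=(T_{ht})_{[0]}$ (here the fact that the graded free resolution has no shifts is crucial, so that $(P_\bullet)_{[0]}$ is again a free resolution over $Q$), observe via Lemma~\ref{LemmaEquationOfSymmetricAlgebra}(\ref{itm:ColonIdealAsaSpecilizationOfTheSymmetricAlgebra}) that $Q/(a_1t/ht,\dots,a_gt/ht)\cong A/\fb$ is local, and only then invoke Lemma~\ref{DeformationOFCEC}(\ref{itm:CETandSpecialization}). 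Without the reduction to the homogeneous sequence and the passage to the degree-zero part, Lemma~\ref{DeformationOFCEC}(\ref{itm:CETandSpecialization}) — which is formulated for a possibly non-local ring $Q$ with a maximal ideal and a regular sequence specializing it to a local ring — cannot be applied.
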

\begin{proof}
 We prove that $R$ satisfies the CE property provided either of the assertions   (\ref{itm:SufficientConditionExcellentUFD}) or (\ref{itm:SufficentConditionLocalization}) holds.	We argue by induction on  $n:=\dim(R)$.

 Since the statement   is immediate in  dimension $\le 2$, so we assume that $n\ge 3$ and the statement has been proved for dimension less than $n$.

\textbf{Step 1: Reduction to  complete quasi-Gorenstein domains that are   locally complete intersection in codimension $\le 1$.} 
By \cite[(3.9) Remark]{HochsterCanonical},   we can reduce to the case where $R$ is a complete local normal domain. Since $R$ is a complete local domain,   $R$ admits a canonical module $\omega_R$ which is an ideal of $R$ (\cite[(3.1)]{AoyamaGotOnTheEndomorphism}). Then we proceed as follows:

\begin{enumerate}[(i)]
	\item\label{itm:SquareRoot}  Using \cite[Lemma 2.2]{TavanfarReduction}, we choose $a\in \fm_R$  which has no square root in $\Frac(R)$.  Then   the $R$-algebra $R(a^{1/2}):=R[X]/(X^2-a)$  is an integral domain  (see \cite[Remark 2.1]{TavanfarReduction}). The element $a^{1/2}:=X+(X^2-a)$ is a square root of $a$ in $R(a^{1/2})$.
	
	\item\label{itm:RDsumW}  
	  Here we follow \cite[Remark 2.3]{TavanfarReduction}. Let $\mathfrak{k}$ be the kernel of the
	  $R$-algebra homomorphism $R[\omega_RX]\hookrightarrow  R[X]\twoheadrightarrow R[X]/(X^2-a)$, where $R[\omega_RX]$  is the Rees algebra of the ideal $\omega_R$. Then $R[\omega_R X]/\mathfrak{k}$ is a domain by part (i), as it is a subring of $R(a^{1/2})$. Since $X+(X^2-a)=a^{1/2}$ in $R(a^{1/2})$, we denote 
	  $\omega_RX+\mathfrak{k}$ (respectively  $r+wX+\mathfrak{k}$, where $r\in R,\ w\in \omega_R$)    by $\omega_Ra^{1/2}$ (respectively  $r+wa^{1/2}$). 
	 
	 \item\label{itm:ModuleFinite} 
	  We endow $R\oplus \omega_R\subseteq   R\oplus R$ with an $R$-algebra structure  by which it is a subalgebra of $R(a^{1/2})\overset{\text{as\ }R\text{\ modules}}{\cong}  R\oplus  R.$ It is easily seen that the $R$-module homomorphism $$R\oplus \omega_R\rightarrow R[\omega_RX]/\mathfrak{k},\ \ (r,w)\in R\oplus \omega_R\mapsto r+wa^{1/2}$$ is an isomorphism of $R$-modules inducing an $R$-algebra structure on $R\oplus \omega_R$ via $$(r,w)(r',w'):=(rr'+ww'a,rw'+r'w).$$ In particular, $R[\omega_RX]/\mathfrak{k}$ is a module finite extension of $R$, so it is also a complete local domain by \cite[Theorem 7]{CohenOnTheStructure}. It is easily verified that $\fm_R+\omega_Ra^{1/2}$ is the unique maximal ideal of $R[\omega_RX]/\mathfrak{k}$.
	 
	 \item \label{itm:CEPropertyandModuleFinite}
	   Since $R\rightarrow R[\omega_RX]/\mathfrak{k}$ is a module finite  extension  in view of part (iii) (thus an integral extension by \cite[Theorem 9.1(i)]{Matsumura}),  Lemma \ref{RingHomomorphismANDCEC} implies that $R$ satisfies the CE Property provided  that  $R[\omega_RX]/\mathfrak{k}$ does so. Here we use that $\dim(R[\omega_RX]/\mathfrak{k})=\dim(R)=n$ by \cite[Exercise 9.2, page 69]{Matsumura} and that $\fm_R(R[\omega_RX]/\mathfrak{k})$ is primary to the maximal ideal of $R[\omega_RX]/\mathfrak{k}$ by  \cite[Lemma 2, page 66]{Matsumura}. Thus any system of parameters for $R$ is also a system  of parameters for $R[\omega_RX]/\mathfrak{k}$.
 	 \item \label{itm:LCICodimension1}
 	   By \cite[Lemma 2.5(ii)]{TavanfarReduction}, $R[\omega_RX]/\mathfrak{k}$ is a locally complete intersection in codimension $1$. 
	 \item \label{itm:QGorenstein}
	   The complete local domain $R[\omega_RX]/\mathfrak{k}$ is quasi-Gorenstein. The proof of this fact is written in the proof of \cite[Proposition 2.7]{TavanfarReduction}.
\end{enumerate}

In view of parts (\ref{itm:CEPropertyandModuleFinite}), (\ref{itm:LCICodimension1}) and (\ref{itm:QGorenstein}), we can replace $R$ by the $n$-dimensional domain $R[\omega_RX]/\mathfrak{k}$ and we hereafter assume that $R$ is a complete quasi-Gorenstein local domain that is a locally complete intersection in codimension $\le 1$\ \footnote{In the case where $R$ is a Cohen-Macaulay  local  domain with canonical ideal $\omega_R$, such an $R$-algebra structure on $R\oplus \omega_R$ is considered in \cite[Definition 1.1 and Proposition 1.2]{EnescuApplications} where this $R$-algebra is called as a pseudocanonical double cover of $R$, and it is proved that the pseudocanonical double cover is a Gorenstein local ring. However,   the author of the present paper became aware of this $R$-algebra structure on $R\oplus \omega_R$   by his own experience and later he   found  a similar construction in \cite{EnescuApplications}.}. Here, we also remark that $R$ is not normal, because our involved quasi-Gorenstein domain extension loses the normal property even though it is a locally complete intersection in codimension $\le 1$. 
	
\textbf{Step 2: The generic linkage and the setting.}	By the previous arguments, we can assume that $R$ is a (non-Cohen-Macaulay) complete quasi-Gorenstein domain which is a locally complete intersection in codimension $\le 1$. Then the Huneke and Ulrich generic linkage is applied to enable us to get a prime almost complete intersection ideal $\fa$ of an excellent regular local ring $A$ such that $A/\fa$ is a locally complete intersection in codimension $\le 1$ and it is linked to a trivial deformation of $R$. Namely, following   \cite{HunekeUlrichDivisorClass} and \cite{UlrichGorenstein}, we proceed as follows.
	
	We  assume that $R:=B/\fb'$ where $(B,\fm_B,K_B)$ is a complete regular local ring and that $\fb'$ is  a grade $g$ ideal of $B$ which is generated by $u$ elements $b'_1,\ldots,b'_u$. Then we consider the indeterminates $$\mathbf{X}:=X_{1,1},\ldots,X_{1,u},X_{2,1},\ldots,X_{2,u},\ldots,X_{g,1},\ldots,X_{g,u}$$ over $B$
	and the sequence $$a_1:=\sum\limits_{j=1}^uX_{1,j}b'_j\ ,\ a_2:=\sum\limits_{j=1}^uX_{2,j}b'_j\ ,\ \ldots\ ,\ a_g:=\sum\limits_{j=1}^uX_{g,j}b'_j.$$ Then $a_1,\ldots,a_g$ is a maximal regular sequence contained in $\fb'B[\mathbf{X}]$ (see \cite{HochsterProperties}). Set   $$\fa':=(a_1,\ldots,a_g):\fb'B[\mathbf{X}].$$  
	It is said that $\fa'$ is a generic linkage of $\fb'$ (\cite[Definition 2.3]{HunekeUlrichDivisorClass}).
	Since $B/\fb'\rightarrow B[\mathbf{X}]_{(\fm_B,\mathbf{X})}/\fb' B[\mathbf{X}]_{(\fm_B,\mathbf{X})}$ is a flat local homomorphism with regular closed fiber $K_B[\mathbf{X}]_{(\mathbf{X})}$, so $B[\mathbf{X}]_{(\fm_B,\mathbf{X})}/\fb' B[\mathbf{X}]_{(\fm_B,\mathbf{X})}$ is also (a non-Cohen-Macaulay) quasi-Gorenstein domain by \cite[Theorem 4.1]{AoyamaGotOnTheEndomorphism}\footnote{In view of \cite[Corollary 2.8]{TavanfarTousiAStudy}, this quasi-Gorenstein property can also  be deduced by an iterated use of Lemma \ref{LemmaIndeterminateActsSurjectivelyonLocalCohomology}. Also, one can deduce this fact from Lemma \ref{RemarkLinkageOfQuasiGorensteinIdealsAndAlmostCompleteIntersectionIdeals}(\ref{itm:ACIIsLinkedToQG}), because the faithfully flat extension $B\rightarrow B[\mathbf{X}]_{(\fm_B,\mathbf{X})}$ preserves linkage of ideals, as well as the unmixedness  and the almost complete intersection property of ideals.}. Consequently, $\fa' B[\mathbf{X}]_{(\fm_B,\mathbf{X})}$  is an unmixed almost complete intersection ideal by Lemma \ref{RemarkLinkageOfQuasiGorensteinIdealsAndAlmostCompleteIntersectionIdeals}(\ref{itm:LinkageOfQGrenseteinIsAlmostCI}).
	By virtue of \cite[Proposition 2.6]{HunekeUlrichDivisorClass}   $\fa'$ is a prime ideal, and $B[\mathbf{X}]/\fa'$  is a locally complete intersection in codimension $\le 1$ in  light of \cite[Proposition 2.9(b)]{HunekeUlrichDivisorClass}. Hence    $\fa' B[\mathbf{X}]_{(\fm_B,\mathbf{X})}$ is also a prime ideal whose residue ring is  a locally complete intersection in codimension $\le 1$.  Set 
	\begin{equation}
	\label{EquationNotationForGenericLinkedACIObtained}
	(A,\fm_A):=B[\mathbf{X}]_{(\fm_B,\mathbf{X})},\ \ \fc:=(a_1,\ldots,a_g)A,\ \  \fb:=\fb'A,\ \ \fa:=\fa'A\overset{\text{Lemma \ref{RemarkLinkageOfQuasiGorensteinIdealsAndAlmostCompleteIntersectionIdeals}(\ref{itm:LinkageOfQGrenseteinIsAlmostCI})}}{=}\fc+(h),\  (\text{for\ some\ }h).
	\end{equation} 
	So $A$ is an excellent regular local ring, and $\fa,\fb$ are linked ideals of $A$ over the complete intersection $\fc$ (see Lemma \ref{RemarkLinkageOfQuasiGorensteinIdealsAndAlmostCompleteIntersectionIdeals}(\ref{itm:LinkageOfQGrenseteinIsAlmostCI})).
	 	
	\textbf{Step 3: Collecting  some depth identities.} Since $A/\fb$ is a trivial deformation of $R=B/\fb'$  (by the image of $\mathbf{X}$ in $A/\fb$), so  setting $m:=\dim(A/\fb)(=\dim(A/\fa))$ 
	\begin{equation}
	\label{DepthQuasiGorenstein}
	\depth(A/\fb)=\dim(A/\fb)-\dim(R)+\depth(R)\ge m-n+2
	\end{equation} 
	($R(\cong \omega_R)$ is quasi-Gorenstein and it has depth $\ge 2$ because the canonical module is always $(S_2)$ and $\dim(R)\ge 3$ by our assumption). Consequently, 
	\begin{equation}
	\label{DepthCanonicalModuleAlmostCompleteIntersection}
	\depth_A(\omega_{A/\fa})\overset{\text{Eq.\ (\ref{EquationCanonicalModule})}}{=}
	\depth_A\big((\fc:\fa)/\fc\big)=
	\depth_A(\fb/\fc)=\depth(A/\fb)+1\ge m-n+3
	\end{equation}
	where the third equality follows from $0\rightarrow \fb/\fc\rightarrow A/\fc\rightarrow A/\fb\rightarrow 0$ and the fact that $A/\fc$ is complete intersection (thus Cohen-Macaulay) while $A/\fb$ is not Cohen-Macaulay. Furthermore  
	\begin{equation}
	\label{DepthAlmostCompleteIntersection}
	\depth(A/\fa)=\depth(A/\fb)-1\ge m-n+1.
	\end{equation}
	in view of  display (\ref{DepthQuasiGorenstein})  and Lemma  \ref{GoodRegularSequenceCorollary}(\ref{itm:DepthIdentity}).
	
	
	
	\textbf{Step 4: Proving the   CE Property for $A/\fa$.} 
	It suffices to find a regular sequence $\mathbf{y}:=y_1,\ldots,y_{m-n+1}$ of $A/\fa$ such that 
	\begin{equation}
	\label{DesiredAttachedPrimeHypothesis}
	y_{i+1}\notin \bigcup\limits_{\fp\in \text{Att}\big(H^{m-i-1}_{\fm_A}\big(A/(\fa,y_1,\ldots,y_{i})\big)\big)}\fp
	\end{equation}
	for each $0\le i\le m-n$. Because  then $\dim\big(A/(\fa,\mathbf{y})\big)=n-1$  implying that $A/(\fa,\mathbf{y})$ satisfies the CE property by the induction hypothesis and then $A/\fa$ satisfies the CE Property in view of Lemma \ref{DeformationOFCEC}(\ref{itm:CETandDeformation}).
	
	We prove the existence of the sequence $\mathbf{y}$ by induction. By display (\ref{DepthCanonicalModuleAlmostCompleteIntersection}) we have $\depth(\omega_{A/\fa})\gneq 2$, so Lemma \ref{AttLocalCohomologyDepthCanonicalModule} implies that $\fm_{A}\notin \text{Att}\big(H^{m-1}_{\fm_{A}}(A/\fa)\big)$ and we can pick some $$y_1\in \fm_{A}\backslash \big((\bigcup\limits_{\fp\in \text{Att}\big(H^{m-1}_{\fm_A}\big(A/(\fa)\big)\big)}\fp) \bigcup \fb \bigcup  (\bigcup\limits_{\fp\in \text{ass}(a_1,\ldots,a_g)}\fp)\big).$$
	Denoting the residue class of $y_1$ in $A/\fa$ with the same notation $y_1$, it is automatically a regular element as $A/\fa$ is a domain. Moreover,  from the exact sequence 
	$0\rightarrow A/\fb\overset{h}{\rightarrow} A/\fc\rightarrow A/\fa\rightarrow 0$ 
	(see  display (\ref{EquationNotationForGenericLinkedACIObtained}) for the notation $h$) we get the exact sequence $$0\overset{y_1\text{\ is regular}}{=}\text{Tor}^A_1(A/\fa,A/(y_1))\rightarrow A/(\fb,y_1)\overset{h}{\rightarrow} A/(\fc,y_1)\rightarrow A/(\fa,y_1)\rightarrow 0 $$ 
	which implies that $(\fc,y_1):(\fa,y_1)\overset{\fa=\fc+(h)}{=}(\fc,y_1):h\overset{}{=}(\fb,y_1)$. Therefore $$\omega_{A/(\fa,y_1)}\overset{\text{Eq. (\ref{EquationCanonicalModule})}}{=}\big((\fc,y_1):(\fa,y_1)\big)/(\fc,y_1)\overset{}{=}(\fb,y_1)/(\fc,y_1)$$ (note that $(\fc,y_1)$ is a (maximal) complete intersection ideal contained in $(\fa,y_1)$ by our choice of $y_1$). In particular, 
	\begin{align*}
	\depth(\omega_{A/(\fa,y_1)}) 
	&= \depth\big((\fb,y_1)/(\fc,y_1)\big)  &&
	&\\&
	= \depth\big(A/(\fb,y_1)\big)+1               && ((\fb,y_1)/(\fc,y_1) \text{\ is the first syzygy of }  \underset{\text{non-CM}}{\underbrace{A/(\fb,y_1)}}\text{\ over } \underset{\text{c.i.}}{\underbrace{A/(\fc,y_1)}})
	&\\&=
	\depth(A/\fb) && (y_1\notin \text{Z}(A/\fb))
	&\\&\overset{}{\ge} m-n+2 && \text{Eq.\ } (\ref{DepthQuasiGorenstein}).
	\end{align*}
	Suppose, inductively, that for some $1\le j<m-n+1$ we have found a   sequence $y_1,\ldots,y_{j}$ in $A$ which forms a regular sequence on each of $A/\fc$, $A/\fb$ and  $A/\fa$  such that  it satisfies display (\ref{DesiredAttachedPrimeHypothesis}) for each $0\le i< j$,   such that $\depth(\omega_{A/(\fa,y_1,\ldots,y_j)})\ge m-n+3-j$, and finally such that $$(\fc,y_1,\ldots,y_j):(\fa,y_1,\ldots,y_j)=(\fc,y_1,\ldots,y_j):h=(\fb,y_1,\ldots,y_j).$$
	
	Since $j<m-n+1$, so $m-n+3-j>2$. Therefore, again Lemma \ref{AttLocalCohomologyDepthCanonicalModule} implies that $\fm_A\notin \text{Att}\big(H^{m-j-1}_{\fm_{A}}\big(A/(\fa,y_1,\ldots,y_j)\big)\big)$ and we may, and we do, pick some $y_{j+1}\in \fm_A$ such that $y_{j+1}$ does not belong to $$(\bigcup\limits_{\fp\in \text{Att}\big(H^{m-j-1}_{\fm_A}\big(A/(\fa,y_1,\ldots,y_j)\big)\big)}\fp) \bigcup (\bigcup\limits_{\fp\in \text{ass}(\fb,y_1,\ldots,y_j)}\fp) \bigcup  (\bigcup\limits_{\fp\in \text{ass}(\fc,y_1,\ldots,y_j)}\fp)\bigcup  (\bigcup\limits_{\fp\in \text{ass}(\fa,y_1,\ldots,y_j)}\fp).$$
	This is possible in view of displays (\ref{DepthQuasiGorenstein}) and (\ref{DepthAlmostCompleteIntersection}). Therefore the exact sequence 
	$$0\rightarrow A/\big((\fc,y_1,\ldots,y_j):h\big)\overset{h}{\rightarrow}A/(\fc,y_1,\ldots,y_j)\rightarrow A/(\fa,y_1,\ldots,y_j)\rightarrow 0$$
	yields the exact sequence 
	$$0\rightarrow A/\big((\fc,y_1,\ldots,y_j):h,y_{j+1}\big)\overset{h}{\rightarrow}A/(\fc,y_1,\ldots,y_{j+1})\rightarrow A/(\fa,y_1,\ldots,y_{j+1})\rightarrow 0$$
	because $\text{Tor}^A_1\big(A/(\fa,y_1,\ldots,y_{j}),A/(y_{j+1})\big)=0$ by our choice of $y_{j+1}$. From this and our induction hypothesis, we get 
	\begin{align*}
	\depth\big(\omega_{A/(\fa,y_1,\ldots,y_{j+1})}\big)
	&\overset{\text{}}{=}\depth\big((\fb,y_1,\ldots,y_{j+1})/(\fc,y_1,\ldots,y_{j+1})\big) && (\text{display\ } (\ref{EquationCanonicalModule}))
	&\\&=\depth\big(A/(\fb,y_1,\ldots,y_{j+1})\big)+1
	&\\&\overset{}{=}\depth\big(A/(\fb,y_1,\ldots,y_{j})\big) && (y_{j+1}\notin \text{Z}\big(A/(\fb,y_1,\ldots,y_{j})\big))
	&\\& = \depth\big((\fb,y_1,\ldots,y_{j})/(\fc,y_1,\ldots,y_{j})\big)-1
	&\\& = \depth(\omega_{A/(\fa,y_1,\ldots,y_j)})-1
	&\\& \ge m-n+3-(j+1).
	\end{align*}
	So our inductive argument is done, the desired regular sequence $y_1,\ldots,y_{m-n+1}$ exists and the CE Property holds for $A/\fa$.

	\textbf{Step 5: Proving the   CE Property for  $A[\fa t,t^{-1}]_{\mathfrak{M}}$ in general, and thus the Strong Canonical Element Property for $A[\fa t,t^{-1}]_{\mathfrak{M}}$ under the validity of the  assertion (\ref{itm:SufficientConditionExcellentUFD}).} 
		By   Step 4 together with  Lemma \ref{LemmaIndeterminateActsSurjectivelyonLocalCohomology} and Lemma \ref{DeformationOFCEC}(\ref{itm:CETandDeformation}), $(A/\fa[X])_{(\fm_A,X)}$ also satisfies the CE Property. From this fact together with Proposition \ref{StructureProposition}(\ref{itm:QuotientOfEReesAlgebraIsACI})  we conclude that
	\begin{equation}
	\label{Equality}
	A[\fa t,t^{-1}]_{\mathfrak{M}}/(a_1t,\ldots,a_gt,ht)\cong\big((A/\fa)[X]\big)_{(\fm_A,X)}
	\end{equation}
	satisfies the CE Property.  Since $\text{height}_A(\fa)\overset{\text{Remark  \ref{RemarkLinkedSoTheSameHeight}}}{=}\text{height}_A(\fb)=\text{height}_B(\fb')=g$, so $$\dim\big((A/\fa[\mathbf{X}])_{(\fm_A,X)}\big)=\dim(A)-g+1$$ and also $$\dim\big(A[\fa t,t^{-1}]_{\mathfrak{M}}/(a_1t,\ldots,a_gt)\big)=\dim(A)+1-g$$ as $a_1t,\ldots,a_gt$ is a (homogeneous) regular sequence   on $A[\fa t,t^{-1}]$ by Proposition \ref{StructureProposition}(\ref{itm:HomogeneousMaximalRegularSequenceOfEReesAlgebra}). So, display (\ref{Equality}) shows that $A[\fa t,t^{-1}]_{\mathfrak{M}}/(a_1t,\ldots,a_gt)$ has the same dimension as its quotient $A[\fa t,t^{-1}]_{\mathfrak{M}}/(a_1t,\ldots,a_gt,ht)$, implying that the CE Property holds for $A[\fa t,t^{-1}]_{\mathfrak{M}}/(a_1t,\ldots,a_gt)$   by Lemma \ref{RingHomomorphismANDCEC}.  But by Proposition \ref{StructureProposition}(\ref{itm:SpecializationOfEReesAlgebraIsS2}), the regular sequence $a_1t,\ldots,a_gt$ satisfies  the condition  $$a_{i+1}t\notin \bigcup\limits_{\fp\in \text{Att}\big(H_{\mathfrak{M}}^{d-i}({A[\fa t,t^{-1}]}_{\mathfrak{M}}/(a_1t,\ldots,a_it))\big)}\fp$$ for each $0\le i\le g-1$. In particular, a repeated use of Lemma \ref{DeformationOFCEC}(\ref{itm:CETandDeformation}) implies that $A[\fa t,t^{-1}]_{\mathfrak{M}}$ also fulfills the CE Property. Thus, since it is already known that  $A[\fa t,t^{-1}]_{\mathfrak{M}}$ is an excellent factorial domain  and it is a homomorphic image of a regular local ring, so from the validity of assertion (\ref{itm:SufficientConditionExcellentUFD})   we  can conclude that $A[\fa t,t^{-1}]_{\mathfrak{M}}$ satisfies  the Strong Canonical Element Property.
	
	\textbf{Step 6: Collecting some sequences and identities:} Now we turn our attention to $A[\fa t,t^{-1}]_{ht}$.   Let $\mathbf{z}''$ be any system of parameters for $A/\fa$, and lift it to a system of parameters $\mathbf{z}'$ for $A/\fc$ (such a lift of $\mathbf{z}''$ can be found applying the Davis' prime avoidance lemma as $\dim(A/\fc)=\dim(A/\fa)$).  We fix a lift of $\mathbf{z}'$ in $A$, that is denoted  by abuse of notation, again with  $\mathbf{z}'$. Set $\mathfrak{P}:=(\fm_A,a_1t,\ldots,a_gt,t^{-1})$, that is a height $d:=\dim(A)$ prime ideal (Proposition \ref{StructureProposition}(\ref{itm:BeingPrimeIdealHeightd})). We claim that, $\mathbf{z}',a_1t,\ldots,a_gt$ is a sequence of length $d$ such that  
	\begin{equation}
	\label{GoodHomogeneousSystemOfParameters}
	\sqrt{(\mathbf{z}',a_1t,\ldots,a_gt)A[\fa t,t^{-1}]_{ht}}=\mathfrak{P}A[\fa t,t^{-1}]_{ht}.
	\end{equation} 
	
	Namely, we have $(\mathbf{z}',a_1t,\ldots,a_gt)A[\fa t,t^{-1}]=(\mathbf{z}',\fc,a_1t,\ldots,a_gt)A[\fa t,t^{-1}]$ consequently it contains $(\fm_A^u,a_1t,\ldots,a_gt)$ for some $u$ ($\mathbf{z}'$ forms a system of parameters for $A/\fc$). So 
	\begin{equation}
	  \label{EquationRadical}
	  \sqrt{(\mathbf{z}',a_1t,\ldots,a_gt)A[\fa t,t^{-1}]}=\sqrt{(\fm_A,a_1t,\ldots,a_gt)A[\fa t,t^{-1}]}.
	\end{equation}
	 Moreover, $(\fm_A,a_1t,\ldots,a_gt)A[\fa t,t^{-1}]_{ht}=\mathfrak{P}A[\fa t,t^{-1}]_{ht}$ because $t^{-1}=h/ht\in (\fm_A,a_1t,\ldots,a_gt)A[\fa t,t^{-1}]_{ht}$.  Hence display (\ref{GoodHomogeneousSystemOfParameters}) holds.  Set $$\mathbf{z}:=\mathbf{z}',a_1t/ht,\ldots,a_gt/ht,$$   a sequence of elements of degree zero of $A[\fa t,t^{-1}]_{ht}$.
	
	
	In addition to $\mathbf{z}$, there is a possibly inhomogeneous sequence $\mathbf{f}:=f_1,\ldots,f_d$ of elements of $\mathfrak{P}$ such that  $$\sqrt{(f_1,\ldots,f_d,ht)A[\fa t,t^{-1}]_{\mathfrak{M}}}=(\mathfrak{M}),$$ so $f_1,\ldots,f_d,ht$ forms a system of parameters for $A[\fa t,t^{-1}]_{\mathfrak{M}}$ (this is possible as $\mathfrak{M}=(\mathfrak{P},ht)$ and $ht$ is a parameter element of the domain $A[\fa t,t^{-1}]_{\mathfrak{M}}$).

	\textbf{Step 7: Proving an analogue of the CE Property  for $A[\fa t,t^{-1}]_{ht}$ and the sequence $f_1,\ldots,f_d$.} Let $(F_\bullet,\partial^{F_\bullet}_\bullet)$ be a minimal graded free resolution of $A[\fa t,t^{-1}]/\mathfrak{P}$.  
	 Fix some chain map $$\phi_\bullet:K_\bullet(\mathbf{f};A[\fa t,t^{-1}])\rightarrow F_\bullet$$ lifting the epimorphism $A[\fa t,t^{-1}]/(\mathbf{f})\rightarrow A[\fa t,t^{-1}]/\mathfrak{P}$. 	Passing from $A[\fa t,t^{-1}]$ to its localization $A[\fa t,t^{-1}]_{ht}$, this induces the chain map $$(\phi_\bullet)_{ht}:K_\bullet(\mathbf{f};A[\fa t,t^{-1}]_{ht})\rightarrow {F_\bullet}_{ht}$$ for which we claim that    (under the validity of either  assertion (\ref{itm:SufficientConditionExcellentUFD}) or assertion (\ref{itm:SufficentConditionLocalization}) of the statement)
	\begin{equation}
	\label{CEforfAtht}
	\forall\ v\in \mathbb{N},\ \ \ 
	f_1^{v-1}\ldots f_d^{v-1}\phi_d(1)/1+\big(\text{im}({\partial^{F_\bullet}_{d+1}})\big)_{{ht}}\notin (f_1^v,\ldots,f_d^v) \text{syz}^{{F_\bullet}_{ht}}_d\big(A[\fa t,t^{-1}]_{ht}/(\mathfrak{P})\big).
	\end{equation}
	To get a contradiction, suppose  the opposite of display (\ref{CEforfAtht})  for some $v\in \mathbb{N}$.  Then,   $$(ht)^{m'}f_1^{v-1}\ldots f_d^{v-1} \phi_d(1)+\text{im}({\partial^{F_\bullet}_{d+1}})\in (f_1^v,\ldots,f_d^v) \text{syz}^{F_\bullet}_d\big(A[\fa t,t^{-1}]/\mathfrak{P}\big)$$ for some $m'\in \mathbb{N}_0$. 
	
	If assertion (\ref{itm:SufficentConditionLocalization}) holds, then $A[\fa t,t^{-1}]_{\mathfrak{P}}$ satisfies the Canonical Element Conjecture and we get a contradiction with display  (\ref{CEforfAtht})  after localizing at $\mathfrak{P}$ (because $f_1,\ldots,f_d$ forms a system of parameters for $A[\fa t,t^{-1}]_{\mathfrak{P}}$ and $ht$ becomes invertible in $A[\fa t,t^{-1}]_{\mathfrak{P}}$). So let us assume that  assertion (\ref{itm:SufficientConditionExcellentUFD}) holds.
	
	Setting $w:=m'+v$
	\begin{equation}
	\label{CEforfAthtInR}
	(ht)^{w-1}f_1^{w-1}\ldots f_d^{w-1} \phi_d(1)+\text{im}({\partial^{F_\bullet}_{d+1}})\in (f_1^w,\ldots,f_d^w) \text{syz}^{F_\bullet}_d\big(A[\fa t,t^{-1}]/\mathfrak{P}\big).
	\end{equation}

	Let $G_\bullet$ be the mapping cone of $$(ht)^w.\id_{F_\bullet}:F_\bullet \overset{}{\rightarrow} F_\bullet,$$
 
  so $G_\bullet$ is a free resolution of $A[\fa t,t^{-1}]/(\mathfrak{P},(ht)^w)$ in view of Remark \ref{RemarkMappingConeFreeResolution}.  Also the mapping cone of  $$(ht)^w.\id_{K_\bullet(\mathbf{f};A[\fa t,t^{-1}])}:K_\bullet(\mathbf{f};A[\fa t,t^{-1}])\rightarrow K_\bullet(\mathbf{f};A[\fa t,t^{-1}])$$ coincides with the Koszul complex $K_\bullet(\mathbf{f},(ht)^w;A[\fa t,t^{-1}])$ (see \cite[Definition and Observations 5.2.1]{SchenzelSimonCompletion}, and also \cite[1.5.1]{SchenzelSimonCompletion} for the used notation). Then  $\phi_\bullet$  extends to a chain map on the mapping cones, resulting in the chain map  $$\psi_{\bullet}:K_\bullet(\mathbf{f},(ht)^w;A[\fa t,t^{-1}])\rightarrow G_\bullet,\ \ \ \psi_i:=(\phi_i,\phi_{i-1})$$ which lifts the natural epimorphism $A[\fa t,t^{-1}]/(\mathbf{f},(ht)^w)\twoheadrightarrow A[\fa t,t^{-1}]/(\mathfrak{P},(ht)^w)$. Then $\psi_{d+1}=\phi_d$, and the relation   in display (\ref{CEforfAthtInR}) yields 
 
	\begin{equation}
	\label{EquationOneMoreThing}
	  (ht)^{w-1}f_1^{w-1}\ldots f_d^{w-1}\psi_{d+1}(1)+\text{im}({\partial^{G_\bullet}_{d+2}})\in ((ht)^{w},f_1^w,\ldots,f_d^w) \text{syz}^{G_\bullet}_{d+1}\big(A[\fa t,t^{-1}]/(\mathfrak{P},(ht)^w)\big).
	\end{equation}  
	Namely, by  display (\ref{CEforfAthtInR}) we have $$(ht)^{w-1}f_1^{w-1}\cdots f_d^{w-1}\phi_d(1)=\sum\limits_{i=1}^df_i^{w}\alpha_i+\partial^{F_\bullet}_{d+1}(\beta),\ \ \alpha_1,\ldots,\alpha_d\in F_d,\ \beta\in F_{d+1}.$$  So, it is easily seen that $$(ht)^{w-1}f_1^{w-1}\cdots f_{d}^{w-1}\psi_{d+1}(1)=(ht)^w(-\beta,0)+\sum\limits_{i=1}^df_i^w(0,\alpha_i)+\partial^{G_\bullet}_{d+2}(0,-\beta).$$
	
	Applying   Lemma \ref{LemmaInducedMapOnKoszulHomolgoies} and composing $\psi_\bullet$ with the natural chain map $$\eta_w:K_\bullet((ht)^w,f_1^w,\ldots,f_d^w;A[\fa t,t^{-1}])\rightarrow K_\bullet((ht)^w,\mathbf{f};A[\fa t,t^{-1}]),$$ in view of display (\ref{EquationOneMoreThing}), we get the chain map
	$\Delta_\bullet:K_\bullet((ht)^w,f_1^w,\ldots,f_d^w;A[\fa t,t^{-1}])\rightarrow G_\bullet$ lifting   $A[\fa t,t^{-1}]/(\mathbf{f}^w,(ht)^w)\twoheadrightarrow A[\fa t,t^{-1}]/(\mathfrak{P},(ht)^w)$ such that 
	\begin{align*}
	(ht)^{w-1}\Delta_{d+1}(1)+\text{im}({\partial^{G_\bullet}_{d+2}})\in ((ht)^{w},f_1^w,\ldots,f_d^w) \text{syz}^{G_\bullet}_{d+1}\big(A[\fa t,t^{-1}]/(\mathfrak{P},(ht)^w)\big).
	\end{align*}

   However, by Step 5 and assertion (\ref{itm:SufficientConditionExcellentUFD}) in the statement of the theorem, we know that $A[\fa t,t^{-1}]_{\mathfrak{M}}$ satisfies the Strong Canonical Element Property and so this is a contradiction, completing this step.

	\textbf{Step 8: Proving an analogue of the CE Property  for $A[\fa t,t^{-1}]_{ht}$ and the sequence $\mathbf{z}$.} Now, we claim that the CE Property mentioned in display (\ref{CEforfAtht}) for the possibly inhomogeneous  sequence $f_1,\ldots,f_d$ in $A[\fa t,t^{-1}]_{ht}$ implies the similar CE Property for the homogeneous sequence $\mathbf{z}$ in $A[\fa t,t^{-1}]_{ht}$. Namely, we consider an $A[\fa t,t^{-1}]_{ht}$-graded free resolution $(P_\bullet,\partial^{P_\bullet}_\bullet)$ of $$A[\fa t,t^{-1}]_{ht}/(\fm_A,a_1t/ht,\ldots,a_gt/ht)$$ such that there is no shift involved in each graded free module in the graded free resolution $P_\bullet$. This no-shift-property is accessible because $A[\fa t,t^{-1}]_{ht}$ has an invertible element of degree $1$ and all syzygies are generated in degree $0$. 
	
	 Let  $\Psi_\bullet:K_\bullet(\mathbf{z};A[\fa t,t^{-1}]_{ht})\rightarrow P_\bullet$ be an arbitrary  homogeneous  chain map lifting the natural epimorphism $A[\fa t,t^{-1}]_{ht}/(\mathbf{z})\rightarrow A[\fa t,t^{-1}]_{ht}/(\fm_A,a_1t/ht,\ldots,a_gt/ht)$. We   show that
	\begin{equation}
	\label{CEforz}
	\forall\ v\in \mathbb{N},\ z_1^{v-1}\ldots z_d^{v-1}\Psi_d(1)+\text{im}({\partial^{P_\bullet}_{d+1}})\notin (z_1^v,\ldots,z_d^v) \text{syz}_d^{P_\bullet}\big(A[\fa t,t^{-1}]_{ht}/(\fm_A,a_1t/ht,\ldots,a_gt/ht)\big).
	\end{equation}
	
	If not, there is some $v$ such that display (\ref{CEforz}) is violated. Let $\Psi'_\bullet$ be the composition of the  chain maps  $$K_\bullet(\mathbf{z}^v;A[\fa t,t^{-1}]_{ht})\overset{\eta_{v}}{\longrightarrow} K_\bullet(\mathbf{z};A[\fa t,t^{-1}]_{ht})\overset{\Psi_\bullet}{\longrightarrow}P_\bullet$$ where 
	$\eta_v$ is as in Lemma \ref{LemmaInducedMapOnKoszulHomolgoies}(\ref{itm:PowerOfX}), so $\Psi'_d(1)=z_1^{v-1}\cdots z_d^{v-1}\Psi_d(1)$ by Lemma \ref{LemmaInducedMapOnKoszulHomolgoies}(\ref{itm:PowerOfX}).
  Then, by our hypothesis we get $$\Psi'_d(1)+\text{im}({\partial^{P_\bullet}_{d+1}})\in (z_1^{v},\ldots,z_d^{v})\text{syz}_d^{P_\bullet}\big(A[\fa t,t^{-1}]_{ht}/(\fm_A,a_1t/ht,\ldots,a_gt/ht)\big).$$  Hence,
	$$\Psi'_d(1)\overset{\im\ \partial^{P_\bullet}_{d+1}}{\equiv}\sum\limits_{i=1}^d z_i^v\alpha_i$$ 
	for some $\alpha_1,\ldots,\alpha_d\in P_d$. 
	
	Set $\Psi''_i:=\Psi'_i$ for each $i< d-1,$ $\Psi''_d:=0$ and assume that $\Psi_{d-1}''$ is given by  $$\Psi''_{d-1}(e_1\wedge\cdots\wedge \widehat{e_i}\wedge\cdots \wedge e_d)=\Psi'_{d-1}(e_1\wedge\cdots\wedge \widehat{e_i}\wedge\cdots \wedge e_d)-\partial^{P_\bullet}_{d}\big((-1)^{i+1}\alpha_i\big)
	.$$  

	By definition, it is clear that $$\partial^{P_\bullet}_{d-1}\circ \Psi''_{d-1}=\partial^{P_\bullet}_{d-1}\circ \Psi'_{d-1}=\Psi'_{d-2}\circ \partial^{K_\bullet(\mathbf{z}^v;A[\fa t,t^{-1}]_{ht})}_{d-1}=\Psi''_{d-2}\circ \partial^{K_\bullet(\mathbf{z}^v;A[\fa t,t^{-1}]_{ht})}_{d-1}.$$  It is also easily verified that $\Psi''_{d-1}\circ \partial^{K_\bullet(\mathbf{z}^v;A[\fa t,t^{-1}]_{ht})}_{d}=0$. Consequently, we get another chain map $$\Psi_\bullet'':K_\bullet(\mathbf{z}^v;A[\fa t,t^{-1}]_{ht})\rightarrow P_\bullet$$  lifting  $A[\fa t,t^{-1}]_{ht}/(\mathbf{z}^v)\overset{\text{nat. epi.}}{\twoheadrightarrow} A[\fa t,t^{-1}]_{ht}/(\fm_A,a_1t/ht,\ldots,a_gt/ht)$ such that $\Psi''_d=0$. 
	From display (\ref{GoodHomogeneousSystemOfParameters}) and the fact that $t^{-1}=h/ht\in (\fm_A,a_1t/ht,\ldots,a_gt/ht)$ we deduce that $$ (f_1,\ldots,f_d)A[\fa t,t^{-1}]_{ht}\subseteq (\mathfrak{P})= (\fm_A,a_1t/ht,\ldots,a_gt/ht)=\sqrt{(\mathbf{z})},$$
	implying that $(f_1^w,\ldots,f_d^w)\subseteq (z_1^v,\ldots,z_d^v)$ for some $w$. Thus  
	by Lemma \ref{LemmaInducedMapOnKoszulHomolgoies}(\ref{itm:GeneralSubIdealSequence}) there exists a chain map of Koszul complexes $$\lambda_\bullet:K_\bullet(f_1^w,\ldots,f_d^w;A[\fa t,t^{-1}]_{ht})\rightarrow K_\bullet(z_1^v,\ldots,z_d^v;A[\fa t,t^{-1}]_{ht})$$ lifting the natural epimorphism on the $0$-th homologies. Then the composition $$\Theta_\bullet:=\Psi''_\bullet\circ \lambda_\bullet:K_\bullet(f_1^w,\ldots,f_d^w;A[\fa t,t^{-1}]_{ht})\rightarrow P_\bullet$$ lifts the natural epimorphism on the $0$-homologies while $\Theta_d(1)=0$. Then since $P_\bullet$ and $(F_\bullet)_{ht}$ are both free resolutions of $A[\fa t,t^{-1}]_{ht}/(\mathfrak{P})$, so there is a chain map $\pi_\bullet:P_\bullet\rightarrow (F_\bullet)_{ht}$ lifting the identity map on $A[\fa t ,t^{-1}]_{ht}/(\mathfrak{P})$, by which we get a chain map $$\delta_\bullet:=\pi_\bullet\circ \Theta_\bullet:K_\bullet(f_1^w,\ldots,f_d^w;A[\fa t,t^{-1}]_{ht})\rightarrow (F_\bullet)_{ht}$$ which lifts the natural epimorphism on the $0$-homologies and $\delta_d(1)=0$. However,  the composited chain map     (the chain map $(\phi_\bullet)_{ht}$ is as in Step 7) $${_w}\Phi_\bullet:K_\bullet(f_1^{w},\ldots,f_d^{w};A[\fa t,t^{-1}]_{ht})\overset{{_{w}^{}\zeta^{}}}{\longrightarrow} K_\bullet(\mathbf{f};A[\fa t,t^{-1}]_{ht})\overset{(\phi_\bullet)_{ht}}{\longrightarrow}(F_\bullet)_{ht}$$ also lifts  $$A[\fa t,t^{-1}]_{ht}/(f_1^w,\ldots,f_d^w)\overset{\text{nat. epi.}}{\twoheadrightarrow} A[\fa t,t^{-1}]_{ht}/(\mathfrak{P})$$ where ${_{w}^{}\zeta^{}}$ is as in Lemma \ref{LemmaInducedMapOnKoszulHomolgoies}(\ref{itm:PowerOfX}).  Then, we get   
	 \begin{align*}
	   f_1^{w-1}\ldots f_d^{w-1}\phi_d(1)/1+\big(\text{im}(\partial_{d+1}^{F_\bullet})\big)_{_{ht}}
	 &\overset{}{=} 
	 {_w}\Phi_d(1)+\big(\text{im}(\partial_{d+1}^{{F_\bullet}})\big)_{ht} && (\text{Lemma \ref{LemmaInducedMapOnKoszulHomolgoies}(\ref{itm:PowerOfX})}) 
	 \\&=
	 {_w}\Phi_d(1)-\underset{=0}{\underbrace{\delta_{d}(1)}}+\big(\text{im}(\partial_{d+1}^{{F_\bullet}})\big)_{ht} 
	 &\\&\in 
	 (f_1^w,\ldots,f_d^w) \text{syz}^{{F_\bullet}_{ht}}_{d}\big(A[\fa t,t^{-1}]_{ht}/(\mathfrak{P})\big) && (\text{Lemma \ref{LemmaBasicFactInHomologicalAlgebra}(\ref{itm:RelationInSyz})}).
	 \end{align*}
	This contradicts  display  (\ref{CEforfAtht}).
	
	\textbf{Step 9: Proving an analogue of the CE Property  for $(A[\fa t,t^{-1}]_{ht})_{[0]}$ and the sequence $\mathbf{z}$.} We set $Q:=(A[\fa t,t^{-1}]_{ht})_{[0]}$. 
	Since there is no shift involved in each graded free module in the graded free resolution $P_\bullet$, so the degree zero part of each graded free module in the free resolution $P_\bullet$ is a finite direct sum of $Q$, hence a free $Q$-module. Consequently,  $(P_{\bullet})_{[0]}$ is a free resolution of  $Q/(\fm_A,a_1t/ht,\ldots,a_gt/ht)$.
	
	  Furthermore, $\big(K_\bullet(\mathbf{z};A[\fa t,t^{-1}]_{ht})\big)_{[0]}=K_\bullet(\mathbf{z};Q)$. Thus, $\Psi_\bullet$ induces  $(\Psi_{\bullet})_{[0]}:K_\bullet(\mathbf{z};Q)\rightarrow (P_\bullet)_{[0]}$.

	Moreover, the analogue to display (\ref{CEforz}) in $Q$ holds. Namely, $(\Psi_\bullet)_{[0]}:K_\bullet(\mathbf{z};Q)\rightarrow (P_\bullet)_{[0]}$ is a chain map lifting the natural surjection on $0$-th homologies such that 
	\begin{equation}
	\label{EquationLastCEPropertySeemingly}
	\forall\ v\in \mathbb{N},\ \ \ z_1^{v-1}\ldots z_d^{v-1}\underset{=\Psi_d(1)}{\underbrace{(\Psi_d)_{[0]}(1)}}+\text{im}({\partial^{(P_\bullet)_{[0]}}_{d+1}})\notin (z_1^v,\ldots,z_d^v)\text{syz}^{(P_\bullet)_{[0]}}_d\big(Q/(\fm_A,a_1t/ht,\ldots,a_gt/ht)\big).
	\end{equation}

	\textbf{Step 10: Proving the CE Property for $R$.} 
	 From Proposition \ref{StructureProposition}(\ref{itm:HomogeneousMaximalRegularSequenceOfEReesAlgebra}), it is easily deduced that $a_1t/ht,\ldots,a_gt/ht$ is a regular sequence of  $Q$. Moreover, from \cite[Theorem 3.5]{HerzogSimisVasconcelosKoszul}\footnote{Theorem 3.6 of the researchgate edition of \cite{HerzogSimisVasconcelosKoszul}.} in conjunction with Lemma  \ref{GoodRegularSequenceCorollary}(\ref{itm:DSequencePrimeACI}) we deduce that $\fa$ is an ideal of linear type, in other words, the  Rees algebra $A[\fa t]$ of $\mathfrak{a}$ coincides with the symmetric algebra $\text{Sym}_A(\fa)$ of $\mathfrak{a}$. Thus recalling our notation in display (\ref{EquationNotationForGenericLinkedACIObtained}), 
	from 
	\begin{align*}
	  Q/(a_1t/ht,\ldots,a_gt/ht)
	  &=(A[\fa t,t^{-1}]_{ht})_{[0]}/(a_1t/ht,\ldots,a_gt/ht)
	  &\\&\cong
	  (A[\fa t]_{ht})_{[0]}/(a_1t/ht,\ldots,a_gt/ht)  && A[\fa t]_{ht}\rightarrow A[\fa t,t^{-1}]_{ht}\text{\ is an isomorphism}
	  &\\&\cong
	  (\text{Sym}_{A}(\fa)_{ht})_{[0]}/(a_1t/ht,\ldots,a_gt/ht)  &&\fa\ \text{\ is an ideal of linear type}
	  &\\&\cong 
  	  A/(\fc:h)  &&  (\text{}\text{Lemma \ref{LemmaEquationOfSymmetricAlgebra}(\ref{itm:ColonIdealAsaSpecilizationOfTheSymmetricAlgebra})})
	  &\\& =
	  A/\fb,
	\end{align*}
	  we see that $Q$ is a (possibly non-local) deformation of $A/\fb$. Also, $A/\fb=B[\mathbf{X}]_{(\fm_B,\mathbf{X})}/\fb'B[\mathbf{X}]_{(\fm_B,\mathbf{X})}\cong R[\mathbf{X}]_{(\fm_R,\mathbf{X})}$ is a trivial deformation of $R$ by the image of the indeterminates $\mathbf{X}$. Consequently, $Q$ is a (possibly non-local) deformation of $R$.
	  
	  In view of the above display, $(\fm_A,a_1t/ht,\ldots,a_gt/ht)Q$ is a maximal ideal of $Q$. Moreover, we have $\sqrt{\mathbf{z}Q}=(\fm_A,a_1t/ht,\ldots,a_gt/ht)Q$ by  display (\ref{EquationRadical}).  In view of these facts as well as   display (\ref{EquationLastCEPropertySeemingly}),	  
	   we are in the situation of Lemma \ref{DeformationOFCEC}(\ref{itm:CETandSpecialization}). Hence $R$ satisfies the CE Property by Lemma \ref{DeformationOFCEC}(\ref{itm:CETandSpecialization}) (Alternatively, one may first conclude from  Lemma \ref{DeformationOFCEC}(\ref{itm:CETandSpecialization}) as well as the above display that $A/\fb$ satisfies the CE Property. Then,  our desired conclusion follows from a second use of Lemma \ref{DeformationOFCEC}(\ref{itm:CETandSpecialization})  this time with respect to the deformation $A/\fb$ of $R=(A/\fb)/(\mathbf{X})$, by replacing $Q$ (respectively, $Q/\mathbf{x}Q$) in the statement of the Lemma \ref{DeformationOFCEC}(\ref{itm:CETandSpecialization})  with $A/\fb$ (respectively, $R$)).
\end{proof}

For the notion  of   Hochster's modification module, mentioned in the statement of the next corollary, we refer to \cite[8.3 Modications and non degeneracy]{BrunsHerzogCohenMacaulay}. It turns out that  Hochster's modification module is a big Cohen-Macaulay module precisely when it remains non-zero after tensoring with the residue field;  this non-zero property follows from the existence of any balanced big Cohen-Macaulay module. Via the next corollary we show that  concluding the big Cohen-Macaulayness of  Hochster's modification module from the existence of a maximal Cohen-Macaulay complex, if possible, establishes the Balanced Big Cohen-Macaulay Module Conjecture by a characteristic free proof in general (see \cite[4. MCM complexes]{IyengarMaSchwedeWalker}  for the notion of maximal Cohen-Macaulay  complexes). Alternatively, one may ask whether the  existence of a maximal Cohen-Macaulay complex implies the  existence of a closure operation satisfying the axioms of \cite[Axioms 1.1]{DietzACharacterization}. See Remark \ref{RemarkWhyLocalizingBBCMModules}(\ref{itm:WhyHochsterModificationModule}), for more comments on the assumptions in the statement of the next corollary.

\begin{cor}\label{CorollaryBigCohenMacaulay}
  If the big Cohen-Macaulayness of  Hochster's modification module can be deduced from the existence of a maximal Cohen-Macaulay complex  by a characteristic free proof, then the (Balanced) Big Cohen-Macaulay Module Conjecture 	can be settled by a characteristic free proof.
  \begin{proof} 
     We show that our hypothesis on deducing  the big Cohen-Macaulayness of  Hochster's modification module from the existence of a maximal Cohen-Macaulay complex implies that the Canonical Element Conjecture holds  by a characteristic free proof, while the latter  is equivalent to the existence of a maximal Cohen-Macaulay complex for any complete local ring by \cite[Proposition 4.3]{IyengarMaSchwedeWalker} and \cite[Theorem (4.3)]{HochsterCanonical}. Then a second use of our hypothesis in conjunction with Remark \ref{RemarkBalancedBCMofCompletion}(\ref{itm:CompletionBigCM}) and  \cite[Proposition 4.3]{IyengarMaSchwedeWalker} settles the statement  by a characteristic free proof.
     
     To conclude the validity of the  Canonical Element Conjecture by a characteristic free proof, in view of Theorem \ref{MonomialTheoremCharacteristicFree}(\ref{itm:SufficentConditionLocalization}), it suffices to show that any localization of a normal excellent local domain  $R$ satisfies the Canonical Element Conjecture provided  $R$ does. So let $R$ be  an excellent normal local domain which satisfies the Canonical Element Conjecture.  Then $\widehat{R}$  satisfies the Canonical Element Conjecture (\cite[Proposition (3.18)(b)]{HochsterCanonical}). Thus $\widehat{R}$ admits a maximal Cohen-Macaulay complex by \cite[Proposition 4.3]{IyengarMaSchwedeWalker} and \cite[Theorem (4.3)]{HochsterCanonical}, and so  $\widehat{R}$, and thence $R$, admits  a balanced big Cohen-Macaulay module by our hypothesis as well as Remark \ref{RemarkBalancedBCMofCompletion} (by a characteristic free proof). It then turns out that  any localization of $R$ admits a balanced big Cohen-Macaulay module by Theorem \ref{TheoremBalancedBigCMModuleLocalizes}, a fortiori any localization of $R$ satisfies the Canonical Element Conjecture as was to be proved.
  \end{proof}
\end{cor}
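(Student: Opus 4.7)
The plan is to extract the Canonical Element Conjecture first (characteristic-free), and then invoke the hypothesis a second time to produce the balanced big Cohen-Macaulay module. The chain of equivalences used throughout is: existence of a maximal Cohen-Macaulay complex over a complete local ring is equivalent to the Canonical Element Conjecture (by \cite[Proposition 3.5]{IyengarMaSchwedeWalker} together with \cite[Theorem (4.3)]{HochsterCanonical}); the standing hypothesis upgrades such a complex to big Cohen-Macaulayness of Hochster's modification module, and the latter is a (balanced, after completion by Remark \ref{RemarkBalancedBCMofCompletion}(\ref{itm:CompletionBigCM})) big Cohen-Macaulay module.

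First I would establish the hypothesis (\ref{itm:SufficentConditionLocalization}) of Theorem \ref{MonomialTheoremCharacteristicFree} in a characteristic-free way. So fix an excellent normal local domain $R$ that satisfies the CEC; its completion $\widehat{R}$ then satisfies the CEC by \cite[Proposition (3.18)(b)]{HochsterCanonical}. Via the Iyengar--Ma--Schwede--Walker equivalence, $\widehat{R}$ admits a maximal Cohen-Macaulay complex. Our hypothesis then yields that Hochster's modification module for $\widehat{R}$ is big Cohen-Macaulay, and passing to completion (Remark \ref{RemarkBalancedBCMofCompletion}(\ref{itm:CompletionBigCM})) provides a balanced big Cohen-Macaulay $\widehat{R}$-module, hence a balanced big Cohen-Macaulay $R$-module (by faithful flatness of $R \to \widehat{R}$ and \cite[Lemma (2.3)]{SharpCohenMacaulayProperties} style considerations). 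Now Theorem \ref{TheoremBalancedBigCMModuleLocalizes} transports this balanced big Cohen-Macaulay module to every localization $R_{\fp}$, and the existence of a balanced big Cohen-Macaulay module forces the CEC at $R_\fp$ (indeed the Monomial Theorem, which is equivalent to CEC, is immediate from a balanced big Cohen-Macaulay module).

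With this localization statement available characteristic-freely, Theorem \ref{MonomialTheoremCharacteristicFree}(\ref{itm:SufficentConditionLocalization}) delivers the CEC for every local ring with a characteristic-free proof. Finally, apply the hypothesis a second time: for any local ring $R$, pass to $\widehat{R}$, use the CEC together with the Iyengar--Ma--Schwede--Walker equivalence to produce a maximal Cohen-Macaulay complex over $\widehat{R}$, upgrade it to big Cohen-Macaulayness of the modification module (our hypothesis), and conclude via Remark \ref{RemarkBalancedBCMofCompletion}(\ref{itm:CompletionBigCM}) that $R$ admits a balanced big Cohen-Macaulay module.

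The main obstacle is conceptual rather than technical: one must be careful that the use of Theorem \ref{MonomialTheoremCharacteristicFree} (whose proof is characteristic-free) does \emph{not} implicitly rely on characteristic-dependent results, and in particular that the auxiliary input (the localization property) is itself proved without recourse to perfectoid or Frobenius techniques. Here the bridge is precisely Theorem \ref{TheoremBalancedBigCMModuleLocalizes}, whose proof uses only excellence, normality, and Lichtenbaum--Hartshorne vanishing, so the chain of reasoning remains characteristic-free. Aside from this bookkeeping, the proof is essentially a two-step application of the hypothesis sandwiched around Theorem \ref{MonomialTheoremCharacteristicFree}(\ref{itm:SufficentConditionLocalization}) and Theorem \ref{TheoremBalancedBigCMModuleLocalizes}.
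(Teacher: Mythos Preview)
Your proposal is correct and follows essentially the same route as the paper: verify condition (\ref{itm:SufficentConditionLocalization}) of Theorem \ref{MonomialTheoremCharacteristicFree} by using the hypothesis together with the Iyengar--Ma--Schwede--Walker equivalence and Theorem \ref{TheoremBalancedBigCMModuleLocalizes} to push a balanced big Cohen--Macaulay module to every localization, then apply the hypothesis a second time. The only cosmetic difference is that the paper phrases the passage from $\widehat{R}$ to $R$ via Remark \ref{RemarkBalancedBCMofCompletion} rather than invoking faithful flatness directly, but the content is the same.
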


\begin{rem}\label{RemarkWhyLocalizingBBCMModules}\emph{
		It is perhaps appropriate to elucidate why we presented Theorem \ref{TheoremBalancedBigCMModuleLocalizes} with a  longer proof  than the proof of \cite[Proposition 2.11]{BaMaPaScTuWaWi21} and why we did not  simply refer to the already existing result \cite[Proposition 2.11]{BaMaPaScTuWaWi21} in the literature? We reason this as follows:}
\end{rem}	\vspace{-4mm}
	\begin{enumerate}[(i)]
		\item
			Our Theorem \ref{TheoremBalancedBigCMModuleLocalizes} is a new result showing that the  algebra structure of balanced big Cohen-Macaulay algebras over complete normal (or excellent normal) local   domains is not required for their localizing property. 			
		\item\label{itm:WhyHochsterModificationModule} 
			The main result of Section \ref{SectionNewVariant}, i.e. Corollary  \ref{CorollaryVariantHolds}, can be deduced from  \cite[Proposition 2.11]{BaMaPaScTuWaWi21} in place of Theorem \ref{TheoremBalancedBigCMModuleLocalizes}. But, in contrast,    Corollary \ref{CorollaryBigCohenMacaulay} essentially requires Theorem \ref{TheoremBalancedBigCMModuleLocalizes} and does not follow from \cite[Proposition 2.11]{BaMaPaScTuWaWi21}. This is because in  the statement of Corollary \ref{CorollaryBigCohenMacaulay}, we mention that   Hochster's modification module  is big Cohen-Macaulay (not  Hochster's modification algebra), so we would need in its proof the localizing property of balanced big Cohen-Macaulay modules rather than balanced big Cohen-Macaulay algebras.  For more comments, recall that the existence of  a maximal Cohen-Macaulay complex  (without an algebra structure) is equivalent to the Canonical Element Conjecture. Since such maximal Cohen-Macaulay complexes are not known to carry a DG-algebra structure (or any algebra structure), it   seems  unlikely (in the eyes of the author) to deduce the big Cohen-Macaulayness of Hochster's modification algebra  from the existence of such a maximal Cohen-Macaulay complex. This is the reason that in the statement of Corollary \ref{CorollaryBigCohenMacaulay} we assumed the deduction of big Cohen-Macaulayness of Hochster's modification module (not algebra) from the existence of a maximal Cohen-Macaulay complex. Here, it seems also necessary  to bring to the reader's attention that  algebras of finite type over a field of characteristic zero admit a maximal Cohen-Macaulay complex carrying a DG-algebra structure (see \cite[Subsection 4.3]{IyengarMaSchwedeWalker}). 
			\item As a main result of the present paper we showed  that for obtaining a characteristic free proof of the Canonical Element Theorem it suffices to show that it is stable under the localization over  excellent factorial (or excellent normal) domains. So  perhaps providing other results  in this direction as we did in Theorem \ref{TheoremBalancedBigCMModuleLocalizes}  is promising that one can also establish the stability of the CE Property under the localization. As an immediate question in this direction, one may ask whether the proof of Theorem \ref{TheoremBalancedBigCMModuleLocalizes} can be modified appropriately to show that the maximal Cohen-Macaulay complexes behave well under the localization. An affirmative answer to this question  establishes  the Canonical Element Theorem with a characteristic free proof.
	\end{enumerate}

\begin{rem} \emph{In \cite{TavanfarAnnihilators}, we showed that the  Monomial Theorem is equivalent to the assertion that for any (possibly non-unmixed) almost complete intersection ring $(R,\mathfrak{m})$ and any system of parameters $\mathbf{x}$ of $R$,  $$\big((\mathbf{x}):_R\fm\big)H_1(\mathbf{x};R)=0$$ (see \cite[Proposition 2.11]{TavanfarAnnihilators}). Then a question (not a conjecture) is  proposed in \cite{TavanfarAnnihilators}. Are all positive Koszul homologies $H_{i\ge 1}(\mathbf{x};R)$   killed by $(\mathbf{x}):\fm$ for  any system of parameters $\mathbf{x}$  of any almost complete intersection $(R,\fm)$ (see \cite[Question 1.1]{TavanfarAnnihilators} and \cite[Question 1.2]{TavanfarAnnihilators})? We conclude with  two comments regarding this question:}
		
		\begin{itemize}
			\item  \emph{
				We are aware of an example showing that the almost complete intersection hypothesis in   \cite[Proposition 2.11]{TavanfarAnnihilators} (and so in \cite[Question 1.1]{TavanfarAnnihilators}) is necessary and can not be relaxed. More precisely, such an example violates the inclusion $\big((\mathbf{x}):_R\fm\big)\subseteq 0:_RH_1(\mathbf{x};R)$     while this example still fulfills $(\mathbf{x})\subsetneq  0:_RH_1(\mathbf{x};R)$ (here $\mathbf{x}$ is a system of parameters for $R$ where $R$ is indeed not an almost complete intersection).
			 }
			
			\item \emph{As a positive result concerning \cite[Question 1.1 and Question 1.2]{TavanfarAnnihilators}, in \cite{TavanfarResolving}\footnote{The   project \cite{TavanfarResolving} is in its very early stage and its   ultimate version, if/when it is available  in the future, can have a  different title or/and  can have more authors.} we showed that any almost complete intersection $(R,\fm)$ admits a system of parameters $\mathbf{x}$ as well as some $z\in \big((\mathbf{x}):_R\fm\big)\backslash (\mathbf{x})$ such that $zH_{i\ge 1}(\mathbf{x};R)=0$. In other words, any  almost complete intersection   $(R,\fm)$ admits  a system of parameters $\mathbf{x}$ and  an  $\mathbf{x}R$-socle element $z$ (as above) for which  the residual approximation complex $\mathcal{Z}^+_\bullet(\mathbf{x},z;R)$ (which is a non-free finite complex)  is acyclic with $H_0\big(\mathcal{Z}^+_\bullet(\mathbf{x},z;R)\big)=R/\fm$.}
	  \end{itemize}
\end{rem}

\section*{Acknowledgement}

We wish to express our  deep appreciation to  Raymond Heitmann for lots of fruitful  and accurate comments on our paper as well as for providing   certain   alternative proofs for some of our results.  We are grateful to Simon H\"aberli for his    many   comments which improved the presentation of the paper. 
We are also grateful to Linquan Ma and S. Hamid Hassanzadeh  for  their  valuable
comments
 on this work.



\begin{thebibliography}{A}
	


\bibitem[An18]{AnLaConjecture} Y. Andr\'e, {\it La conjecture du facteur direct}, Publ. Math. Inst. Hautes \'{E}tudes
Sci., {\bf 127}(1), (2018), 71--93.

\bibitem[An20]{AndreWeakFunctoriality} Y. Andr\'e,  \textit{Weak functoriality of {C}ohen-{M}acaulay algebras}, J. Amer. Math. Soc., \textbf{33}, no. 2, (2020), 363--380.




\bibitem[Ao83]{AoyamaSomeBasic}
Y. Aoyama, \textit{Some basic results on canonical modules}, J. Math. Kyoto Univ., \textbf{23} (1983), 85--94.

\bibitem[AoGo85]{AoyamaGotOnTheEndomorphism} Y. Aoyama and S. Goto, \textit{On the endomorphism ring of the canonical module}, J. Math. Kyoto Univ., \textbf{25},  (1985),  21--30.

	









\bibitem[AvFo91]{AvramovFoxbyHomological} L. L. Avramov and H.-B. Foxby, \textit{Homological dimensions of unbounded complexes}, J. Pure Appl. Algebra, \textbf{71}, no. 2-3, (1991), 129--155.


\bibitem[AvIyNe18]{AvramovIyengarNeemanBigCohenMacaulay} L. L. Avramov, S. B. Iyengar and A. Neeman, \textit{Big Cohen-Macaulay modules, morphisms of perfect complexes, and intersection theorems in local algebra}, Doc. Math., \textbf{23}, (2018), 1601--1619.

\bibitem[Bh18]{BhattOnTheDirectSummand} B. Bhatt, \textit{On the direct summand conjecture and its derived variant}, Invent. Math., \textbf{212}, no. 2, (2018), 297--317.


\bibitem[BMPSTWW21]{BaMaPaScTuWaWi21} B. Bhatt, L. Ma, Z. Patakfalvi,  K. Schwede, K. Tucker, J. Waldron and J. Witaszek,  \textit{Globally +-regular varieties and the minimal model program for threefolds in mixed characteristic}, \color{blue} arXiv:2012.15801v3 [math.AG] \color{black} (2022).






\bibitem[BrSh13]{BrodmannSharpLocal}
M. P. Brodmann and  R. Y. Sharp, \textit{Local Cohomology: An Algebraic Introduction with Geometric Applications, second edition}, Cambridge University Press, (2013).



\bibitem[BrHe98]{BrunsHerzogCohenMacaulay} W. Bruns and J. Herzog, \textit{Cohen-Macaulay rings}, Cambridge, (1998).





\bibitem[Ch00]{ChristensenGorenstein}
L. W. Christensen,  \textit{Gorenstein Dimensions}, Lecture Notes in Mathematics,  Springer-Verlag, Berlin, (2000).


\bibitem[Co46]{CohenOnTheStructure} I. S. Cohen, \textit{On the structure and ideal theory of complete local rings}, Trans. Am. Math. Soc., \textbf{59}, (1946), 54--106. 


\bibitem[Di05]{DibaeiAStudy} M. T. Dibaei, \textit{A study of Cousin complexes through the dualizing complexes}, Commun. Algebra, \textbf{33}, (2005),  119--132.

\bibitem[DiJa11]{DibaeiJafariCohen-Macaulay} M. T. Dibaei and R. Jafari, \textit{Cohen-Macaulay loci of modules}, Commun. Algebra, \textbf{39}, (2011),  3681--3697.


\bibitem[Di10]{DietzACharacterization} G. D. Dietz, \textit{A characterization of closure operations that induce big Cohen-Macaulay modules}, Proc. Amer. Math. Soc., \textbf{138},  no. 11, (2010), 3849–3862.

\bibitem[DMTT19]{DivaaniAzarMohammadiTavanfarTousi}  K. Divaani-Aazar, F. M. A.  Mashhad, E. Tavanfar and Massoud Tousi, \textit{On the new intersection theorem for totally reflexive modules}, Collect. Math., \textbf{71}, no. 3,  (2020), 369--381.


\bibitem[Du21]{DuttaLocalCohomology} S. P. Dutta, \textit{Local Cohomology of Module of Differentials of integral extensions},   J. Algebra, 	\textbf{582}, (2021), 136–156.


\bibitem[Du87]{DutaOnTheCanonical} S. P. Dutta, \textit{On the canonical element conjecture}, Trans. Am. Math. Soc., \textbf{299}, (1987), 803--811.

\bibitem[Du94]{DutaOnTheCanonicalII} S. P. Dutta, \textit{On the canonical element conjecture II}, Math. Z., \textbf{216}, no. 3, (1994), 379--388.


\bibitem[Du13]{DuttaTheMonomial} S. P. Dutta, \textit{The monomial conjecture and order ideals}, J. Algebra, \textbf{383}, (2013), 232--241.

\bibitem[Du16]{DuttaTheMonomialII} S. P. Dutta, \textit{The monomial conjecture and order ideals II}, J. Algebra, \textbf{454}, (2016), 123--138.

\bibitem[DuGr08]{DuttaGriffithIntersectionMultiplicity} S. P. Dutta and P. Griffith, \textit{Intersection multiplicity, canonical element conjecture and the syzygy problem}, Michigan Math. J., \textbf{57}, (2008), 227--247.





\bibitem[En03]{EnescuApplications} F. Enescu, \textit{Applications of pseudocanonical covers to tight closure problems}, J. Pure and App. Algebra, \textbf{178}, (2003), 159--167.

\bibitem[FoFoGrRe75]{FossumFoxbyEtAlMinimalInjective} R. Fossum, H. B. Foxby,  P. Griffith and I. Reiten, \textit{Minimal injective resolutions with applications to dualizing modules and Gorenstein modules}, Publ. Math., Inst. Hautes Étud. Sci., \textbf{45}, (1975), 193--215.
 




\bibitem[Gr65]{GrothendieckEGAIVII} 
A. Grothendieck,  \textit{\'El\'ements de g\'eom\'etrie alg\'ebrique IV}, Publications Math. I.H.E.S. {\bf24} (1965).







\bibitem[He02]{HeitmannTheDirectSummand} R. C. Heitmann, \textit{The direct summand conjecture in dimension three}, Ann. Math. (2), \textbf{156}, no. 2, (2002), 695--712.

\bibitem[HeSiVa83]{HerzogSimisVasconcelosKoszul}
J. Herzog, A. Simis, and W. V. Vasconcelos, \emph{Koszul homology and blow-up rings}, Commutative Algebra (Trento 1981), 79--169, Lecture Notes in Pure and Appl. Math., 84, Dekker, New York, (1983).


\bibitem[Ho83]{HochsterCanonical} M. Hochster, \textit{Canonical elements in local cohomology modules and the direct summand conjecture}, J. Algebra, \textbf{84}, (1983), 503--553.

\bibitem[Ho73]{HochsterContracted} M. Hochster, \textit{Contracted ideals from integral extensions of regular rings}, Nagoya Math. J., \textbf{51}, (1973), 25--43.

\bibitem[Hoc73]{HochsterCriteria} M. Hochster, \textit{Criteria for equality of ordinary and symbolic powers of
	primes}, Math. Z., \textbf{133}, (1973), 53--65.




\bibitem[Hoch73]{HochsterProperties} M. Hochster, \textit{Properties of Noetherian rings stable under general grade reduction}, Arch.  Math., \textbf{24}, (1973), 393--396.

\bibitem[HoHu95]{HochsterHunekeApplications} M. Hochster and C. Huneke, \textit{Applications of the existence of big {C}ohen-{M}acaulay
	algebras}, Adv. Math., \textbf{113}, no. 1, (1995), 45--117.

\bibitem[HoHu92]{HochsterHunekeInfinite} M. Hochster and C. Huneke, \textit{Infinite integral extensions and big {C}ohen-{M}acaulay	algebras}, Ann. of Math. (2), \textbf{135}, no. 1, (1992), 53--89.


\bibitem[HuMa97]{HuckabaMarleyHilbert} S. Huckaba and T. Marley, \textit{Hilbert coefficients and the depths of associated graded rings}, J. Lond. Math. Soc.  II. Ser., \textbf{56}, no. 1, (1997), pp. 64--76.

\bibitem[Hu81]{HunekeAlmost} C. Huneke, \textit{Almost complete intersections and factorial rings}, J. Algebra, \textbf{71}, (1981), 179--188.


\bibitem[Hu80]{HunekeSymmetric}
C. Huneke, \emph{On the symmetric and Rees algebra of an ideal generated by a $d$-sequence}, J. Algebra, \textbf{62} (1980), 268--275.






\bibitem[HuUl85]{HunekeUlrichDivisorClass}
C. Huneke and B. Ulrich, \emph{Divisor class groups and deformations}, Amer. J. Math., \textbf{107}, (1985), 1265--1303.

\bibitem[IyMaScWa21]{IyengarMaSchwedeWalker} S. B. Iyengar, L. Ma, K. Schwede and M. E. Walker, \textit{Maximal Cohen-Macaulay complexes and their uses: A partial survey}, In: Commutative algebra. Expository papers dedicated to David Eisenbud on the occasion of his 75th birthday, Cham: Springer, pp. 475–500,  (2021).

\bibitem[KoLe97]{KohLeeSomeRestrictions} 	J. Koh and K. Lee,  \textit{Some Restrictions on the Maps in Minimal Resolutions}, J. Algebra, \textbf{202}, (1998), 671--689.

\bibitem[Ku74]{KunzAlmost}
E. Kunz, \textit{Almost complete intersections are not Gorenstein rings}, J. Algebra, \textbf{28}, (1974), 111--115.

\bibitem[Ly12]{LynchAnnihilators} L. R. Lynch, \textit{Annihilators of Top Local Cohomology}, Commun. Algebra, \textbf{40}, no. 2, (2012), 542--551.




\bibitem[Ma89]{Matsumura} H. Matsumura, \textit{Commutative ring theory},  Cambridge University Press, Cambridge, (1989).



\bibitem[Oh96]{OhiDirectSummand} T. Ohi, \textit{Direct summand conjecture and descent for flatness}, Proc. Amer. Math. Soc., \textbf{124}, no. 7, (1996), 1967--1968.




\bibitem[Ro09]{Rotman} J. J. Rotman, \textit{An introduction to homological algebra. 2nd ed.}, Berlin: Springer, 2009.






\bibitem[Sc82]{SchezelNotes}
P. Schenzel, \textit{Notes on liaison and duality}, J. Math. Kyoto Univ., \textbf{22}, (1982/1983), 485--498.

\bibitem[Sc98]{SchenzelOnTheUse} P. Schenzel, \textit{On the use of local cohomology in algebra and geometry}, J. Elias, J.M. Giral, R.M. Miró-Roig, S. Zarzuela (Eds.), Six Lectures in Commutative Algebra, Proceed. Summer School on Commutative Algebra at Centre de Recerca Matemàtica, Progr. Math., vol. 166, Birkhäuser (1998),  241-292.

\bibitem[ScSi18]{SchenzelSimonCompletion} P. Schenzel and A.-M. Simon, \textit{Completion, \v{C}ech and local homology and cohomology. Interactions between them}, Springer Monogr. Math. (2018).





\bibitem[Sh82]{SharpACousin} R. Y. Sharp, \textit{A cousin complex characterization
	of balanced big Cohen-Macaulay
	modules},  Q. J. Math., Oxf., II. Ser., \textbf{33}, (1982), 471--485.

\bibitem[Sh81]{SharpCohenMacaulayProperties} R. Y. Sharp, \textit{Cohen-Macaulay properties for balanced big Cohen-Macaulay modules}, Math. Proc. Camb. Philos. Soc., \textbf{90}, (1981), 229--238.





\bibitem[SiSt03]{SimonStrookerStiffness} A.-M. Simon and 	J. R.  Strooker, \textit{Stiffness of finite free resolutions
	and the canonical element conjecture},  Bull. Sci. Math., \textbf{127}, no. 3, (2003), 251--260.
 
\bibitem[SP]{Stacks} \textit{The Stacks Project}, Available at  \color{blue} http://stacks.math.columbia.edu.\color{black}.

\bibitem[St90]{StrookerHomological} J. R. Strooker, \textit{Homological questions in local algebra},  London Mathematical Society Lecture Note Series, 145,
Cambridge University Press, Cambridge, 1990.

\bibitem[StSt79]{StrookerStuckrudMonomialConjecture} J. R. Strooker and J. St\"ukrad, \textit{Monomial Conjecture and Complete Intersections}, manuscripta math., \textbf{79}, (1993), 153--159.








\bibitem[Ta22]{TavanfarAnnihilators}
E. Tavanfar, \textit{Annihilators of Koszul homologies and almost complete intersections}, J. Commut. Algebra, \textbf{14}, (2022), No. 1, 95-113.



\bibitem[Ta17]{TavanfarReduction}
E. Tavanfar, \textit{Reduction of the Small Cohen-Macaulay Conjecture to excellent unique factorization domains}, Arch. Math. \textbf{109}, (2017), 429--439. 


\bibitem[Ta]{TavanfarResolving} E. Tavanfar, \textit{Resolving the residue field of almost complete intersections by residual approximation complexes and its application}, in preparation.


\bibitem[TaTu18]{TavanfarTousiAStudy}
E. Tavanfar and M. Tousi, \textit{A study of quasi-Gorenstein rings}, J. Pure App. Algebra, \textbf{222}, (2018), 3745--3756.






\bibitem[Ul84]{UlrichGorenstein}
B. Ulrich, \textit{Gorenstein rings as specialization of unique factorization domains}, J. Algebra, \textbf{86}, (1984), 129--140.







\end{thebibliography}
\end{document}